\title[Legendrian Embeddings of Rational Homogeneous Spaces into Nilpotent Orbits]{Classification of Equivariant Legendrian Embeddings of Rational Homogeneous Spaces into Nilpotent Orbits}
\author{Minseong Kwon}
\address{Morningside Center of Mathematics, Academy of Mathematics and Systems Science, Chinese Academy of Sciences, Beijing, 100190, China}
\email{minseong@amss.ac.cn; 0009-0009-6567-3090 (ORCID)}
\subjclass[2020]{17B08, 14M17, 53D10}
\keywords{nilpotent orbit, contact structure, Legendrian subvariety, rational homogeneous space}
\date{September 11, 2025; January 28, 2026 (v2)}
\newcommand{\rank}{\textrm{rank}}
\newcommand{\codim}{\textrm{codim}}
\newcommand{\pdiff}[2]{\frac{\partial #1}{\partial #2}}
\newtheorem{theorem}{Theorem}[section]
\newtheorem{coro}[theorem]{Corollary}
\newtheorem{lemma}[theorem]{Lemma}
\newtheorem{proposition}[theorem]{Proposition}
\newtheorem*{theorem*}{Theorem}
\newtheorem*{maintheorem*}{Main Theorem}
\newtheorem*{coro*}{Corollary}
\newtheorem*{lemma*}{Lemma}
\newtheorem*{prop*}{Proposition}
\newtheorem*{claim*}{Claim}
\theoremstyle{definition}
\newtheorem{defn}[theorem]{Definition}
\newtheorem{prob}{Problem}
\newtheorem*{prob*}{Problem}
\newtheorem{exa}[theorem]{Example}
\newtheorem*{exa*}{Example}
\newtheorem*{exer*}{Exercise}
\newtheorem*{acknowledgements*}{Acknowledgements}
\newtheorem{rmk}[theorem]{Remark}
\newtheorem*{rmk*}{Remark}
\newtheorem*{assump*}{Assumption}
\newtheorem*{notation*}{Notations}
\def \CC {\mathbb{C}}
\def \HH {\mathbb{H}}
\def \OO {\mathbb{O}}
\def \PP {\mathbb{P}}
\def \QQ {\mathbb{Q}}
\def \RR {\mathbb{R}}
\def \SS {\mathbb{S}}
\def \ZZ {\mathbb{Z}}
\def \Lcal {\mathcal{L}}
\def \Ncal {\mathcal{N}}
\def \Ocal {\mathcal{O}}
\def \Xcal {\mathcal{X}}
\def \afr {\mathfrak{a}}
\def \bfr {\mathfrak{b}}
\def \gfr {\mathfrak{g}}
\def \hfr {\mathfrak{h}}
\def \ifr {\mathfrak{i}}
\def \kfr {\mathfrak{k}}
\def \lfr {\mathfrak{l}}
\def \mfr {\mathfrak{m}}
\def \nfr {\mathfrak{n}}
\def \ofr {\mathfrak{o}}
\def \pfr {\mathfrak{p}}
\def \sfr {\mathfrak{s}}
\def \tfr {\mathfrak{t}}
\def \ufr {\mathfrak{u}}
\def \hbar {\bar{h}}
\begin{document}

\begin{abstract}
    For a complex semi-simple Lie algebra, every nilpotent orbit in its projectivization comes with a complex contact structure.
    For each nilpotent orbit, we classify projective Legendrian subvarieties that are homogeneous under the actions of their stabilizers in the adjoint group.
    In particular, we present a classification of equivariant Legendrian embeddings of rational homogeneous spaces into adjoint varieties.
\end{abstract}

\maketitle

\tableofcontents

\subsection*{Statements and Declarations}
The author declares no competing interests.

\newpage

\section{Introduction}

We are working over $\CC$, the field of complex numbers.
For a semi-simple Lie algebra $\sfr$ and a nilpotent orbit $Z \subset \PP(\sfr)$ (that is, an adjoint orbit of nilpotent elements), it is well known that $Z$ admits a natural (complex) contact structure which is invariant under the adjoint action.
In this setup, we classify equivariant Legendrian embeddings of rational homogeneous spaces into $Z$.
More precisely, we give an answer to the following problem:

\begin{prob} \label{prob: nilpotent}
    Let $\sfr$ be a semi-simple Lie algebra, $S_{\text{ad}}$ its adjoint group, and $Z \subset \PP(\sfr)$ a nilpotent orbit.
    Classify projective Legendrian subvarieties $O$ of $Z$ that are homogeneous under the actions of the stabilizers $\text{Stab}_{S_{\text{ad}}}(O)$.
\end{prob}

Of particular interest is the case where $\sfr$ is simple and $Z$ is the adjoint variety, i.e., the highest weight orbit of the adjoint representation $\sfr$.
In fact, adjoint varieties can be characterized as rational homogeneous spaces admitting invariant contact structures, as shown by Boothby \cite{Boothby61}.
Furthermore, adjoint varieties are the only known examples of Fano contact manifolds, and it has been conjectured that every Fano contact manifold is isomorphic to an adjoint variety (the so-called LeBrun-Salamon conjecture \cite{LS94Strong, Beauville1998FanoContact}).
Problem \ref{prob: nilpotent} is answered in Theorem \ref{main thm: adjoint} (in the case where $Z$ is an adjoint variety) and Theorem \ref{main thm: semi simple} (in the case where $Z$ is a nilpotent orbit other than an adjoint variety).

\subsection{Preliminaries}

Before stating the main theorems, let us recall some terminologies from contact geometry.
From now on, every manifold is assumed to be a complex manifold, and every variety is assumed to be a complex algebraic variety.
For a manifold $Z$ with $\dim Z > 1$, a holomorphic hyperplane subbundle $D \subset TZ$ is called a \emph{contact structure} if the bundle morphism $D \wedge D \rightarrow TZ / D$ defined by the Lie bracket of vector fields is everywhere non-degenerate.
In particular, for each $z\in Z$, the fiber $D_{z}$ is a conformal symplectic vector space.
For a submanifold $X \subset Z$, we say that $X$ is \emph{Legendrian} if its tangent spaces are Lagrangian (i.e., maximal isotropic) subspaces of the fibers of $D$.
If furthermore $Z$ is an algebraic variety and $X$ is a smooth subvariety that is Legendrian, we simply say that $X$ is a \emph{Legendrian subvariety} of $Z$.

Remark that Problem \ref{prob: nilpotent} is already solved in the case where $Z$ is the odd-dimensional projective space $\PP^{2n+1}$, i.e., the adjoint variety for a simple Lie algebra $C_{n+1} (=\mathfrak{sp}(2n+2))$.
Indeed, every projective Legendrian subvariety $O$ of $\PP^{2n+1}$ as in Problem \ref{prob: nilpotent} can be constructed as the space of lines on an adjoint variety passing through a given point.
Such a Legendrian subvariety is called a \emph{subadjoint variety} in literature.
See \cite[Table 1]{Buczynski2006LegendrianSubvarieties} for a list of all subadjoint varieties, \cite[\S A.1.3]{Buczynski2008AlgebraicLegendrian} for a history of the classification, and \cite[Theorem 11]{LandsbergManivel2007LegendrianVarieties} and \cite{Buczynski2006LegendrianSubvarieties} for generalizations of the classification in non-equivariant settings.

Even in the case where $Z$ is not a projective space, there is a known recipe to construct projective Legendrian subvarieties as in Problem \ref{prob: nilpotent}, using symmetric subalgebras.
Here, a subalgebra $\lfr$ of a semi-simple Lie algebra $\sfr$ is called \emph{symmetric} if there is a nontrivial Lie algebra involution $\theta: \sfr \rightarrow \sfr$ such that $\lfr = \sfr_{+1} := \{x \in \sfr: \theta(x) = x\}$.
Indeed, if we put $\sfr_{- 1} := \{x \in \sfr : \theta(x) = - x\}$, then for each nontrivial irreducible $\lfr$-subrepresentation $V \subset \sfr_{-1}$ and its highest weight orbit $O_{V} \subset \PP(V)$, $Z:= S_{\text{ad}} \cdot O_{V}$ is a nilpotent orbit and $O_{V}$ is a Legendrian subvariety of $Z$ (see \cite[Proposition 4.31]{BKP}).
Furthermore, using the well-known classification of symmetric subalgebras, one can easily obtain a complete list of such Legendrian subvarieties (see Propositions \ref{prop: Legendrian for IHSS} and \ref{prop: highest weight orbits for symm iso irre pair}).
Note, however, that not every equivariant Legendrian embedding arises in this way.
For example, the only subadjoint varieties arising from symmetric subalgebras are linear subspaces $\PP^{n} \subset \PP^{2n+1}$.

\subsection{Main results and outline}

Our main results are the following two theorems, giving a complete answer to Problem \ref{prob: nilpotent}.
The first main theorem concerns the case where $Z$ is an adjoint variety.

\begin{theorem} \label{main thm: adjoint}
    Let $\sfr$ be a simple Lie algebra, $S_{\text{ad}}$ its adjoint group, and $Z \subset \PP(\sfr)$ the adjoint variety.
    Assume that $O$ is a projective Legendrian subvariety of $Z$.
    If $O$ is homogeneous under the $\text{Stab}_{S_{\text{ad}}}(O)$-action, then $O$ is the highest weight $\lfr$-orbit contained in $\PP(V)$ for some reductive subalgebra $\lfr$ of $\sfr$ and an irreducible $\lfr$-subrepresentation $V$ of $\sfr$ belonging to the following list:
    \begin{enumerate}
        \item $\lfr = \sfr_{+1}$ and $V$ is any $\sfr_{+1}$-subrepresentation of $\sfr_{-1}$, where $\sfr_{\pm1}:=\{x \in \sfr: \theta(x) = \pm 1\}$ for a Lie algebra involution $\theta: \sfr \rightarrow \sfr$ such that $\sfr_{+1}$ is not isomorphic to the following symmetric subalgebras:
        \begin{enumerate}
            \item $C_{p} \oplus C_{l-p}$ in $\sfr = C_{l}$ ($1 \le p \le l-1$).
            
            \item $C_{l}$ in $\sfr = A_{2l-1}$ ($l \ge 2$).
            
            \item $\mathfrak{so}(l)$ in $\sfr = \mathfrak{so}(l+1)$ ($l \ge 2$).
            
            \item $B_{4}$ in $\sfr = F_{4}$.
            
            \item $F_{4}$ in $\sfr = E_{6}$; or
        \end{enumerate}
        
        \item $\lfr$ is a non-symmetric reductive subalgebra of $\sfr$ and $V$ is an irreducible $\lfr$-subrepresentation of $\sfr$ with highest weight $\rho$ given as follows:
        \begin{enumerate}
        \item $\sfr = C_{2}$, $\lfr = A_{1}$, and $\rho ={\dynkin[labels={6}] A{x}}$.
        \item $\sfr = C_{7}$, $\lfr = C_{3}$, and $\rho={\dynkin[labels={,,2}] C{**x}}$.
        \item $\sfr = C_{10}$, $\lfr = A_{5}$, and $\rho={\dynkin[labels={,,2}] A{**x**}}$.
        \item $\sfr = C_{16}$, $\lfr = D_{6}$, and $\rho={\dynkin[labels={,,,,2}] D{****x*}}$.
        \item $\sfr = C_{28}$, $\lfr = E_{7}$, and $\rho={\dynkin[backwards, upside down, labels={,,,,,,2}] E{******x}}$.
        \item $\sfr = C_{l}$ ($l \ge 3$), $\lfr = A_{1} \oplus \mathfrak{so}(l)$, and $\rho$ is associated to the tensor product of the standard representations of $A_{1}(=\mathfrak{sl}(2))$ and of $\mathfrak{so}(l)$.

        \item $\sfr = A_{2l-1}$ ($l \ge 3$), $\lfr = A_{l-1} \oplus A_{1}$, and $\rho={\dynkin[labels={1,,,1}] A{x*.*x} \otimes \dynkin[labels={2}] A{x}}$.
        \item $\sfr = A_{15}$, $\lfr = D_{5}$, and $\rho={\dynkin[labels={,,,1,1}] D{***xx}}$.
        \item $\sfr = A_{9}$, $\lfr = A_{4}$, and $\rho={\dynkin[labels={,1,1}] A{*xx*}}$.

        \item $\sfr = D_{8}$, $\lfr = B_{4}$, and $\rho={\dynkin[labels={,,1}] B{**x*}}$.
        \item $\sfr = D_{2l}$ ($l \ge 3$), $\lfr = C_{l} \oplus A_{1}$, and $\rho={\dynkin[labels={,1}] C{*x*.**} \otimes \dynkin[labels={2}] A{x}}$.
        \item $\sfr = E_{7}$, $\lfr = F_{4} \oplus A_{1}$, and $\rho={\dynkin[backwards, labels={,,,1}] F{***x} \otimes \dynkin[labels={2}] A{x}}$.
        \end{enumerate}
        In each case of (2.a--l), $\sfr = \lfr \oplus V$ as $\lfr$-representations and $V = \{x \in \sfr : b_{\sfr}(x,\,\lfr) = 0\}$ for the Killing form $b_{\sfr}$ of $\sfr$.
    \end{enumerate}
\end{theorem}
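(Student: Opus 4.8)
The plan is to first convert the geometric hypothesis into a purely Lie-theoretic one, and then to reduce the classification to the known lists of symmetric and isotropy-irreducible pairs, pruned by the Legendrian condition. Set $L=\text{Stab}_{S_{\text{ad}}}(O)$ and let $\lfr$ be a reductive Levi factor of its Lie algebra. Since $O$ is projective and $L$-homogeneous, it is a rational homogeneous space $O=L/P$; choosing the base point to be fixed by a Borel subgroup of $L$ contained in $P$, I would represent it by a line $\CC e\subset\sfr$ on which this Borel acts through a character, so that $e$ is a highest weight vector for $\lfr$. Because $O\subset Z$, the element $e$ is a minimal nilpotent element of $\sfr$, i.e. (after conjugation) a highest root vector. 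Letting $V\subset\sfr$ be the irreducible $\lfr$-submodule generated by $e$, the module $V$ has highest weight the $\lfr$-weight of $e$, and $O$ is exactly the highest weight orbit $O_V\subset\PP(V)$. This already puts $O$ in the shape asserted by the theorem and reduces everything to determining the admissible pairs $(\lfr,V)$.

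Next I would make the Legendrian condition explicit. Completing $e$ to an $\mathfrak{sl}_2$-triple produces the contact grading $\sfr=\gfr_{-2}\oplus\gfr_{-1}\oplus\gfr_0\oplus\gfr_1\oplus\gfr_2$ with $\gfr_{\pm 2}$ one-dimensional and $e$ spanning $\gfr_2$; the contact hyperplane at $[e]$ is canonically $\gfr_1$, equipped with the symplectic form $(u,v)\mapsto\lambda(u,v)$ defined by $[u,v]=\lambda(u,v)\,e\in\gfr_2$. The tangent space $T_{[e]}O$ is the image of $\lfr$ in $\gfr_1$ under the map induced by $x\mapsto[x,e]$, so the Legendrian condition is equivalent to this image being a Lagrangian subspace of $\gfr_1$. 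Unwinding this yields two constraints: a numerical one, $\dim O=\tfrac12\dim\gfr_1$, and an isotropy one expressed through the Killing form $b_{\sfr}$. The goal of this step is to show that these force the orthocomplement $\lfr^{\perp}=\{x:b_{\sfr}(x,\lfr)=0\}$ to behave rigidly with respect to the $\lfr$-action.

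The central structural claim I would then establish is that the pair $(\sfr,\lfr)$ is either symmetric or isotropy-irreducible. Concretely, analyzing how $\gfr_{-1},\gfr_0,\gfr_1$ decompose under $\lfr$ and combining the isotropy relation with the Jacobi identity, I expect to show that $\lfr^{\perp}$ is either the $(-1)$-eigenspace $\sfr_{-1}$ of a Lie algebra involution $\theta$ (so that $\lfr=\sfr_{+1}$ is symmetric and $V$ is an irreducible summand of $\sfr_{-1}$), or is itself $\lfr$-irreducible and equal to $V$, so that $\sfr=\lfr\oplus V$ is an isotropy-irreducible decomposition. Given this dichotomy, the classification follows by consulting the two standard lists. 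For the symmetric branch I would run through the classification of involutions and discard exactly those pairs whose highest weight orbit fails to be Legendrian in the \emph{adjoint} variety — equivalently, whose highest weight is not the highest root of $\sfr$, or whose dimension exceeds $\tfrac12\dim\gfr_1$ — which I expect to be precisely the five families (1.a)--(1.e). For the non-symmetric branch I would invoke the classification of non-symmetric isotropy-irreducible pairs, keeping precisely those for which the highest weight of $V=\lfr^{\perp}$ equals the highest root and the half-dimensional isotropy holds, yielding the list (2.a)--(2.l).

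I expect the crux to be this dichotomy, and within it the non-symmetric case: a priori $\sfr$ could decompose under $\lfr$ as $\lfr\oplus V\oplus(\text{additional summands})$, and one must rule out any extra isotypic pieces to conclude $\sfr=\lfr\oplus V$. This rigidity does not follow from dimension counting alone and seems to require exploiting the full interaction between the symplectic form on $\gfr_1$, the grading, and the reductivity of $\lfr$ — essentially showing that any extra summand would either violate the Lagrangian condition or force an involution, collapsing back into the symmetric case. Once this is in place, the residual work — matching the surviving subalgebras against the finite classification and verifying the Legendrian property case by case — is lengthy but routine, and the converse statement (that every listed pair does arise) is supplied by the constructions recalled in the preliminaries.
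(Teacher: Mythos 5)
Your opening reductions (realizing $O$ as the highest weight $\lfr$-orbit through a long root vector $e$, and rewriting the Legendrian hypothesis as a Lagrangian condition in the contact grading) are correct and agree with the paper's setup, and your target dichotomy is the right statement. But the proposal has a genuine gap at exactly the step you flag as the crux, and the ingredients you propose for closing it --- the symplectic form on $\gfr_{1}$, the grading, reductivity, and the Jacobi identity --- are demonstrably insufficient. The reason is that every one of these conditions is insensitive to replacing the full stabilizer by a proper subalgebra acting transitively on $O$. Concretely: take $\sfr=\mathfrak{sl}(8)$ and $O\simeq\QQ^{6}\subset Z_{\text{long}}$, the highest weight $\mathfrak{so}(8)$-orbit of Example \ref{ex: legendrian in type A}(2), and let $\lfr''\simeq\mathfrak{so}(7)$ be embedded in $\mathfrak{so}(8)\subset\mathfrak{sl}(8)$ via the $8$-dimensional spin representation. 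Then $\lfr''$ still acts transitively on $O$ (the quadric is the variety of pure spinors of $\mathfrak{so}(7)$), the irreducible $\lfr''$-module generated by $e$ is still the full $35$-dimensional $V=\Sym^{2}_{0}(\CC^{8})\simeq\Lambda^{3}\CC^{7}$, one has $b_{\sfr}(\lfr'',\,V)=0$, and the tangent spaces of $O$ are of course still Lagrangian --- yet $(\mathfrak{sl}(8),\,\lfr'')$ is neither symmetric nor isotropy irreducible, and $\sfr\neq\lfr''\oplus V$ (as $63\neq 21+35$). So no argument built only from your listed ingredients can yield the dichotomy: any proof must exploit that $\lfr$ is the Levi of the \emph{full} stabilizer, i.e., it must establish the lower bound $V^{\perp}=\{w\in\sfr: b_{\sfr}(w,\,V)=0\}\subseteq\ofr$, and that is not a pointwise statement.

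This missing bound is precisely what the paper supplies with a tool absent from your proposal: deformation theory. Theorem \ref{thm: iso irre var as Legendrian moduli} shows, using Merkulov's Legendre moduli space theorem (Theorem \ref{thm: Merkulov result on Legendre moduli}), equivariance of the Kodaira map, and Bott--Borel--Weil, that $M=S_{\text{ad}}/\text{Stab}_{S_{\text{ad}}}(O)$ is a complete and maximal analytic family of Legendrian deformations of $O$; completeness gives $\sfr/\ofr\simeq T_{o}M\simeq H^{0}(O,\,\Lcal|_{O})\simeq V^{*}$, so irreducibility of $\sfr/\ofr$ is an output of deformation theory (the essential point being that a first-order Legendrian deformation is determined by its contact-line-bundle component, which is what forces $V^{\perp}\subseteq\ofr$). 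Note that the Legendrian hypothesis is used here globally, not just at a point: for merely integral orbits the coset family can fail to be maximal (Example \ref{ex: non-maximal family}). Past this point your plan and the paper's proof coincide: maximality of $\ofr$ gives the parabolic (Hermitian symmetric, Proposition \ref{prop: Legendrian for IHSS}) versus reductive (isotropy irreducible, Theorem \ref{thm: classification of isotropy irre pair}) alternative, with the pruning carried out in Propositions \ref{prop: highest weight orbits for symm iso irre pair} and \ref{prop: rho is a highest root sp}. One small correction to your pruning criterion: the excluded symmetric pairs (1.a)--(1.e) are discarded not because their orbits sit in $Z_{\text{long}}$ with the wrong dimension, but because their highest weight orbits are not contained in $Z_{\text{long}}$ at all; they are Legendrian in other nilpotent orbits and accordingly reappear in Theorem \ref{main thm: semi simple}(2.b--f).
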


The list of Legendrian subvarieties obtained by Theorem \ref{main thm: adjoint}(1) is given in Propositions \ref{prop: Legendrian for IHSS} and \ref{prop: highest weight orbits for symm iso irre pair}.
In Theorem \ref{main thm: adjoint}(2), the items (2.a--f) (i.e., the cases where $\sfr$ is of type $C$) are corresponding to the non-linear subadjoint varieties.
For a description of the embedding of $\lfr$ into $\sfr$ given in (2), see Remark \ref{rmk:notation tables}.

Other nilpotent orbits are considered in our second main theorem.
In the following theorem, we use a labeling for nilpotent orbits in projectivized simple Lie algebras, explained in Section \ref{section: Contact Structure of Adjoint Variety}.

\begin{theorem} \label{main thm: semi simple}
    Let $\tilde{\sfr}$ be a semi-simple Lie algebra, $Z \subset \PP(\tilde{\sfr})$ a nilpotent orbit, and $\sfr$ the smallest ideal of $\tilde{\sfr}$ such that $Z \subset \PP(\sfr)$.
    Denote by $\tilde{S}_{\text{ad}}$ and $S_{\text{ad}}$ the adjoint groups of $\tilde{\sfr}$ and $\sfr$, respectively.
    Assume that $O$ is a projective Legendrian subvariety of $Z$.
    If $O$ is homogeneous under the $\text{Stab}_{\tilde{S}_{\text{ad}}}(O)$-action, then $O$ is $\text{Stab}_{S_{\text{ad}}}(O)$-homogeneous.
    Moreover, one of the following holds:
    \begin{enumerate}
        \item $\sfr$ is simple, $Z$ is the adjoint variety for $\sfr$, and $O$ can be obtained by Theorem \ref{main thm: adjoint}; or
        \item $O$ is the highest weight $\lfr$-orbit contained in $\PP(V)$ for some reductive subalgebra $\lfr$ of $\sfr$ and an irreducible $\lfr$-subrepresentation $V$ of $\sfr$ with highest weight $\rho$ given as follows:
        \begin{enumerate}
            \item $\sfr = \lfr' \oplus \lfr'$, $\lfr=\text{diag}(\lfr')$, and $\rho$ is the highest root of $\lfr'$ for a simple Lie algebra $\lfr'$. In this case, $Z = \PP (\Ocal_{\text{min}} \oplus \Ocal_{\text{min}})$ where $\Ocal_{\text{min}} \subset \lfr'$ is the adjoint orbit of a highest root vector of $\lfr'$.
            
            \item $\sfr = C_{l}$, $\lfr = C_{p} \oplus C_{l-p}$ ($1 \le p \le l-1$), and $\rho = {\dynkin[backwards, labels={1}] C{x*.**} \otimes \dynkin[labels={1}] C{x*.**}}$. In this case, $Z= Z_{\text{short}}$ in $\PP(\sfr)$. 
            
            \item $\sfr = A_{2l-1}$, $\lfr = C_{l}$ ($l \ge 2$), and $\rho = {\dynkin[labels={,1}] C{*x.**}}$. In this case, $Z= Z_{[2^{2},\, 1^{2l-4}]}$ in $\PP(\sfr)$.
            
            \item $\sfr = \mathfrak{so}(l+1)$, $\lfr = \mathfrak{so}(l)$ ($l \ge 2$), and $\rho$ is associated to the standard representation of $\mathfrak{so}(l)$.
            In this case, $Z= Z_{[3,\,1^{l-2}]} (= Z_{\text{short}} \text{ if $l$ is even})$ in $\PP(\sfr)$.
            
            \item $\sfr = F_{4}$, $\lfr = B_{4}$, and $\rho = {\dynkin[labels={,,,1}] B{***x}}$. In this case, $Z= Z_{\text{short}}$ in $\PP(\sfr)$.
            
            \item $\sfr = E_{6}$, $\lfr = F_{4}$, and $\rho = {\dynkin[backwards, labels={,,,1}] F{***x}}$. In this case, $Z= Z_{2A_{1}}$ in $\PP(\sfr)$.
            
            \item $\sfr = B_{3}$, $\lfr = G_{2}$, and $\rho = {\dynkin[labels={1,}] G{x*}}$. In this case, $Z=Z_{[3,\,2^{2}]}$ in $\PP(\sfr)$.
        \end{enumerate}
        In each case of (2.a--g), $Z$ is quasi-projective but not projective, $\sfr = \lfr \oplus V$ as $\lfr$-representations, and $V = \{x \in \sfr : b_{\sfr}(x,\,\lfr) = 0\}$ for the Killing form $b_{\sfr}$ of $\sfr$.
        Moreover, $\lfr$ is a symmetric subalgebra of $\sfr$ in (2.a--f), but not in (2.g).
    \end{enumerate}
\end{theorem}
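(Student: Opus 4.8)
The plan is to reduce Theorem~\ref{main thm: semi simple} to the adjoint-variety case (Theorem~\ref{main thm: adjoint}) through a projectivity dichotomy, after first disposing of the passage from $\tilde{S}_{\text{ad}}$ to $S_{\text{ad}}$. For the group reduction, write $\tilde{\sfr} = \sfr \oplus \sfr^{\perp}$ as an orthogonal direct sum of ideals, so that $\tilde{S}_{\text{ad}} = S_{\text{ad}} \times S_{\text{ad}}'$ with $S_{\text{ad}}'$ the adjoint group of $\sfr^{\perp}$. Since $[\sfr, \sfr^{\perp}] = 0$, the factor $S_{\text{ad}}'$ acts trivially on $\PP(\sfr) \supseteq Z \supseteq O$, whence $\text{Stab}_{\tilde{S}_{\text{ad}}}(O) = \text{Stab}_{S_{\text{ad}}}(O) \times S_{\text{ad}}'$ and the action on $O$ factors through $S_{\text{ad}}$. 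Thus $O$ is $\text{Stab}_{\tilde{S}_{\text{ad}}}(O)$-homogeneous if and only if it is $\text{Stab}_{S_{\text{ad}}}(O)$-homogeneous, which proves the first assertion and lets me work with $S_{\text{ad}}$ and the minimal ideal $\sfr$ henceforth.

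Next I would split on the projectivity of $Z = \PP(\Ocal)$. A nonzero nilpotent orbit $\Ocal$ is a cone whose boundary $\overline{\Ocal} \setminus \Ocal$ is a union of strictly smaller nilpotent orbits, so $Z$ is projective precisely when $\overline{\Ocal} = \Ocal \cup \{0\}$, that is, when $\Ocal$ is a minimal nonzero nilpotent orbit; and in a semisimple algebra such an orbit is supported on a single simple factor. By minimality of $\sfr$ this forces $\sfr$ simple with $Z$ its adjoint variety, which is alternative~(1) and is exactly the hypothesis of Theorem~\ref{main thm: adjoint}. Everything then comes down to the case where $Z$ is not projective.

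In that case I would invoke the same structural analysis that underlies Theorem~\ref{main thm: adjoint}: a homogeneous projective Legendrian $O$ is the highest weight orbit $O_V \subset \PP(V)$ of an irreducible subrepresentation $V$ of a reductive subalgebra $\lfr \subset \sfr$, a highest weight vector $e \in V$ is nilpotent, and $Z = S_{\text{ad}} \cdot [e]$. Reading the Legendrian condition through the Kirillov--Kostant--Souriau form $\omega_e([u,e],[v,e]) = b_\sfr(e,[u,v])$ on $\Ocal$, Legendrianity of $O_V$ becomes the statement that $\widehat{O_V}$ is a half-dimensional $\omega$-isotropic cone; together with the irreducibility of $V$ and the exclusion of the Hermitian (reducible-isotropy) pairs, which would instead produce the minimal orbit, this yields $V = \{x \in \sfr : b_\sfr(x,\lfr) = 0\}$ and $\sfr = \lfr \oplus V$. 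I would then run through the classification of orthogonal decompositions $\sfr = \lfr \oplus V$ with $\lfr$ reductive and $V$ irreducible --- the isotropy-irreducible symmetric pairs of Propositions~\ref{prop: Legendrian for IHSS} and~\ref{prop: highest weight orbits for symm iso irre pair}, the short list of non-symmetric isotropy-irreducible pairs, and, when $\sfr$ is not simple, only the diagonal $\text{diag}(\lfr') \subset \lfr' \oplus \lfr'$ with $V \cong \lfr'$ the adjoint module --- and for each candidate compute the orbit $Z = S_{\text{ad}} \cdot [e]$ by identifying the partition or Bala--Carter type of $e$. Candidates whose orbit is minimal fall back under Theorem~\ref{main thm: adjoint}(1); those whose orbit is non-minimal (hence non-projective) are exactly (2.a--g), with the labels recorded in the statement.

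The main obstacle I expect is completeness together with the orbit bookkeeping of this last step: proving that every homogeneous projective Legendrian subvariety is captured by the decomposition $\sfr = \lfr \oplus V$ (in particular ruling out reducible $V$ and ruling out all non-simple $\sfr$ beyond the diagonal), and then correctly placing each surviving highest weight vector in its nilpotent orbit. The symmetric cases (2.b--f) reduce to the known isotropy-irreducible symmetric pairs once $e \in \sfr_{-1}$ is located (for instance $\mathfrak{sp}(2l) \subset \mathfrak{sl}(2l)$ lands in $Z_{[2^2,1^{2l-4}]}$), and the diagonal case (2.a) is a direct computation with highest-root vectors; the genuinely exceptional input is the non-symmetric inclusion $G_2 \subset B_3$, whose irreducible complement is the $7$-dimensional standard representation and whose highest weight vector lies in $Z_{[3,2^2]}$. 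Confirming that this last embedding is Legendrian and that no further non-symmetric pair survives the orbit analysis is the delicate point.
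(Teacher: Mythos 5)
Your opening reduction from $\tilde{S}_{\text{ad}}$ to $S_{\text{ad}}$ is exactly the paper's argument, and your projectivity dichotomy is correct and consistent with the statement: $Z=\PP(\Ocal)$ is projective precisely when $\overline{\Ocal}=\Ocal\cup\{0\}$, which by minimality of the ideal $\sfr$ forces $\sfr$ simple and $Z=Z_{\text{long}}$, i.e.\ alternative (1); the closing bookkeeping (symmetric pairs, the diagonal case, and the exceptional non-symmetric pair $G_{2}\subset B_{3}$ landing in $Z_{[3,\,2^{2}]}$) matches what the paper carries out in Section \ref{section: Structure of Isotropy Representation}. The problem is the step in between.

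For non-projective $Z$ you assert that Legendrianity (read as $\widehat{O_V}$ being a half-dimensional isotropic cone for the Kirillov--Kostant--Souriau form), irreducibility of $V$, and ``exclusion of the Hermitian pairs'' together yield $V=\lfr^{\perp}$ and $\sfr=\lfr\oplus V$. This is precisely the statement that the stabilizer algebra $\ofr$ is a maximal subalgebra with $\sfr/\ofr$ irreducible, and it does not follow from the isotropic-cone reformulation, which is only a restatement of the hypothesis. A priori $\ofr$ need not be maximal, need not be reductive or parabolic, and the span $V$ of $O$ can be a proper subspace of $\ofr^{\perp}$: this actually happens in the Hermitian cases, e.g.\ for the linear Legendrian $\PP^{n}\subset\PP^{2n+1}$ one has $V\simeq\text{Sym}^{2}\CC^{n+1}$ and $\lfr\oplus V\subsetneq\mathfrak{sp}(2n+2)$, so ``homogeneous Legendrian with irreducible $V$'' alone cannot force the decomposition. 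Your exclusion of the Hermitian configurations via non-projectivity of $Z$ is sound \emph{only after} one knows the dichotomy that the stabilizer is either parabolic of Hermitian type or reductive with irreducible isotropy --- which is the very thing that needs proof, so the argument as written is circular. The paper supplies this missing ingredient by deformation theory (Theorem \ref{thm: iso irre var as Legendrian moduli}): using Merkulov's Legendre moduli spaces and the Kodaira map, the family $\{g\cdot O\}_{g\in S_{\text{ad}}}$ is shown to be a complete analytic family of Legendrian deformations, whence $\sfr/\ofr\simeq H^{0}(O,\,\Lcal|_{O})\simeq V^{*}$ as $\ofr^{\text{Levi}}$-representations by the Bott--Borel--Weil theorem; irreducibility of $\sfr/\ofr$, maximality of $\ofr$, and the parabolic/reductive dichotomy all flow from this isomorphism (and the effectiveness statement, which uses minimality of $\sfr$). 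Without this step --- which you yourself flag as ``the main obstacle'' but do not resolve --- the passage to the classification of orthogonal decompositions $\sfr=\lfr\oplus V$ has no foundation, so the proposal is incomplete at its central point.
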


To prove Theorems \ref{main thm: adjoint}--\ref{main thm: semi simple}, basic properties of nilpotent orbits as contact manifolds are reviewed in Section \ref{section: Contact Structure of Adjoint Variety}.

In Section \ref{section: Classification of Homogeneous Legendrian Subvarieties}, we explain the result of Merkulov \cite{Merkulov1997ExistenceGeometry}, which is an important tool to prove Theorems \ref{main thm: adjoint}--\ref{main thm: semi simple}.
In \cite{Merkulov1997ExistenceGeometry}, Merkulov shows that given a compact Legendrian submanifold $O$ of a contact manifold $Z$, there exists a complex manifold, called a Legendre moduli space, parametrizing a maximal family of Legendrian deformations of $O$ in $Z$, provided that a certain cohomology vanishes.
Moreover, Merkulov describes the tangent space of the Legendre moduli space in terms of the Kodaira map (introduced by Kodaira \cite{Kodaira1962TheoremCompleteness}).
In the setting of Theorems \ref{main thm: adjoint}--\ref{main thm: semi simple}, we show that a coset variety $S_{\text{ad}}/\text{Stab}_{S_{\text{ad}}}(O)$ is a Legendre moduli space associated to $O \subset Z$.
Moreover, its tangent space $\sfr/\ofr$ is an irreducible $\ofr$-representation where $\ofr$ is the Lie algebra of $\text{Stab}_{S_{\text{ad}}}(O)$ (Theorem \ref{thm: iso irre var as Legendrian moduli}).

This approach reduces the proof of Theorems \ref{main thm: adjoint}--\ref{main thm: semi simple} to the classification of subalgebras $\ofr$ of $\sfr$ such that $\sfr/\ofr$ is an irreducible $\ofr$-representation.
Since this condition implies that $\ofr$ is a maximal subalgebra, there are two possibilities: $\ofr$ is either parabolic, or reductive (cf. \cite[Corollaire~1, \S10, Ch.~VIII]{Bourbaki}; see also \cite[Theorem, \S30.4, Ch.~X]{Humphreys}).
In the former case, $\ofr$ is a parabolic subalgebra corresponding to an irreducible Hermitian symmetric space, whose classification is well known.
In the latter case, the classification of $\ofr$ can be read off from the results of Manturov \cite{Manturov1961HomogeneousAsymmetric, Manturov1961RiemannianSpaces, Manturov1966HomogeneousRiemannian}, Wolf \cite{Wolf1968GeometryStructure, Wolf1984CorrectionGeometry}, and Kr\"{a}mer \cite{Kramer1975KlassifikationBestimmter}.
We summarize the classification in Section \ref{section: Isotropy Irreducible Varieties}.

In Section \ref{section: Structure of Isotropy Representation}, we determine which subalgebras $\ofr$ of $\sfr$ such that the quotients $\sfr/\ofr$ are irreducible indeed define Legendrian subvarieties.
Namely, we show that each item in Theorems \ref{main thm: adjoint}--\ref{main thm: semi simple} defines a Legendrian subvariety of the prescribed nilpotent orbit.
In the process of the proof, we prove that Legendrian subvarieties in Theorem \ref{main thm: adjoint} are scheme-theoretic linear sections, with few exceptions belonging to Theorem \ref{main thm: adjoint}(1) (see Theorem \ref{thm: Legendrian then scheme-theoretic intersection} and Remark \ref{rmk: linear section}).

In Section \ref{section: proof of classification}, we complete the proof of Theorems \ref{main thm: adjoint}--\ref{main thm: semi simple}, and then present the following corollaries:

\begin{itemize}
    \item While all subadjoint varieties are Hermitian symmetric spaces, there are rational homogeneous spaces that are not Hermitian symmetric but admit equivariant Legendrian embeddings into nilpotent orbits (other than $\PP^{2n+1}$).
    We give a list of such rational homogeneous spaces in Corollary \ref{coro: not IHSS but Legendrian}.

    \item For a projective Legendrian subvariety $O \subset Z$ as in Problem \ref{prob: nilpotent}, it is possible that the whole $\text{Aut}(O)^{0}$-action does not extend to the $S_{\text{ad}}$-action, i.e., the restriction $\text{Stab}_{S_{\text{ad}}}(O)^{0} \rightarrow \text{Aut}(O)^{0}$ may not be surjective.
    (Here, the superscript $0$ stands for the identity component.)
    However, in such a case, we show that one can {\lq enlarge\rq} $Z$ to the adjoint variety $\tilde{Z}$ for a bigger simple Lie algebra $\tilde{\sfr} (> \sfr)$ so that $O$ is a Legendrian subvariety of $\tilde{Z}$, and $\text{Stab}_{\tilde{S}_{\text{ad}}}(O)^{0} \rightarrow \text{Aut}(O)^{0}$ is surjective for the adjoint group $\tilde{S}_{\text{ad}}$ of $\tilde{\sfr}$.
    See Corollary \ref{coro: extend aut} for details.
\end{itemize}

Finally, in Section \ref{section: Tables}, we give four tables used in the proof of Theorems \ref{main thm: adjoint}--\ref{main thm: semi simple}.

\subsection{Conventions}

We are working in the category of complex algebraic varieties, except for Section \ref{section: Classification of Homogeneous Legendrian Subvarieties} where we consider the holomorphic category.
Every variety is assumed to be an integral separated scheme of finite type over $\CC$.
For an algebraic group, an algebraic subgroup means a Zariski closed subgroup.
By a reductive algebra, we mean an algebraic linear Lie algebra that is the direct sum of its center and a semi-simple ideal.
For simple Lie algebras, our numbering of nodes of Dynkin diagrams is consistent with \cite[Table~1, \S2, Reference Chapter]{OnishchikVinberg1990LieGroups}.
From Section \ref{section: Contact Structure of Adjoint Variety}, we denote by $\sfr$ a semi-simple Lie algebra and by $b_{\sfr}$ its Killing form.
From Section \ref{section: Isotropy Irreducible Varieties}, we denote by $(\gfr,\,\hfr)$ a pair of reductive algebras $\hfr \subset \gfr$ satisfying the conditions in Definition \ref{defn: isotropy irre var} with $\dim \gfr > 1$.
In particular, $\gfr/\hfr$ is an irreducible $\hfr$-representation, and we write $\gfr = \hfr \oplus \mfr$ where $\mfr$ is the orthogonal complement of $\hfr$ in $\gfr$ with respect to the Killing form $b_{\gfr}$.
$O_{\mfr}$ means the highest weight orbit in $\PP(\mfr)$, and $Z_{\mfr} \subset \PP(\gfr)$ means a nilpotent orbit containing $O_{\mfr}$ (Proposition \ref{prop:Zm exists}).

\subsection*{Acknowledgments}
The author would like to thank his PhD advisor Jun-Muk Hwang for guiding this project.
The author is grateful to Jaros\l aw Buczy\'nski, Philippe Eyssidieux and Shin-young Kim for valuable discussions and for introducing enlightening references.
The author also would like to thank David Sykes for detailed comments on the draft of this paper.
This work is supported by the Institute for Basic Science (IBS-R032-D1).
The author is very grateful to the anonymous referee for the detailed report and helpful suggestions.

\section{Contact geometry of nilpotent orbits} \label{section: Contact Structure of Adjoint Variety}

In this section, we recall the notion of contact structures, and review contact geometry over nilpotent orbits.

\begin{defn} \label{defn: contact structure on any Z}
    Let $Z$ be a complex manifold of dimension $>1$, and $D \subset TZ$ a holomorphic vector subbundle.
    \begin{enumerate}
        \item The \emph{Levi tensor} $\text{Levi}^{D}$ is a bundle morphism defined as
        \[
            \text{Levi}^{D} : \bigwedge^{2} D \rightarrow TZ / D, \quad v \wedge w \mapsto [v,\, w] \mod D
        \]
        where $v$ and $w$ are local sections of $D$ and $[v,\,w]$ denotes the Lie bracket of vector fields.
        \item $D$ is called a \emph{contact structure} of $Z$ if $D \subset TZ$ is of corank 1 and $\text{Levi}^{D}_{z}$ is a non-degenerate 2-form on the fiber $D_{z}$ for every $z \in Z$.
        In this case, $Z$ is called a \emph{contact manifold}, and the quotient line bundle $\Lcal := TZ / D$ is called the \emph{contact line bundle}.

        \item A complex submanifold $X$ of a contact manifold $Z$ is called an \emph{integral submanifold} of the contact structure if $X$ is everywhere tangent to the contact structure, that is, $T_{x}X \subset D_{x}$ for any $x \in X$.
        If furthermore $\dim Z = 2 \dim X + 1$, we say that $X$ is a \emph{Legendrian} submanifold of $Z$.

        \item For a smooth variety equipped with a contact structure, a subvariety that is an integral/Legendrian submanifold is called an \emph{integral/Legendrian subvariety}.
    \end{enumerate}
\end{defn}

Note that for a contact structure $D$ of $Z$, its fiber $D_{z}$ at a point $z\in Z$ is equipped with a conformal symplectic structure induced by $\text{Levi}^{D}_{z} : \bigwedge^{2}D_{z} \rightarrow T_{z}Z/D_{z} (\simeq \CC)$.
Moreover, the tangent space of an integral submanifold at $z$ is an isotropic subspace of $D_{z}$.
Thus being a Legendrian submanifold means that its tangent space is a Lagrangian subspace of the contact structure at each point.

\begin{exa} \label{example: contact str of PTX}
    Let $Y$ be a complex manifold, and $Z:= \PP T^{*}Y$ its projectivized cotangent bundle.
    For $y \in Y$, each $z \in \PP T^{*}_{y}Y \subset Z$ corresponds to a hyperplane $\text{Ann}(z) \subset T_{y}Y$.
    If we define a hyperplane $\Theta_{z} \subset T_{z} Z$ as the preimage of $\text{Ann}(z)$ under the differential $T_{z}Z \rightarrow T_{y}Y$ of the natural projection, then $\Theta := \bigcup_{z \in Z} \Theta_{z}$ becomes a contact structure of $Z$.
    Moreover, it is well known that every Legendrian submanifold of $Z$ (with respect to $\Theta$) can be obtained as the projectivized conormal bundle $\PP N^{*}_{Y' / Y}$ of a complex submanifold $Y' \subset Y$.
\end{exa}

From now on, we always denote by $\sfr$ a semi-simple Lie algebra, by $b_{\sfr}$ its Killing form and by $S_{\text{ad}}$ its adjoint group (i.e., the identity component of $\text{Aut}(\sfr)$).

\begin{defn} \label{defn: adjoint variety}
    Let $\Ncal \subset \sfr$ be the cone of nilpotent elements.
    \begin{enumerate}
        \item An $S_{\text{ad}}$-orbit contained in $\PP (\Ncal)$ is called a \emph{nilpotent orbit} in $\PP(\sfr)$.
        \item If $\sfr$ is simple, the $S_{\text{ad}}$-orbit of a long root space in $\PP(\sfr)$ is called the \emph{adjoint variety} of $\sfr$, and denoted by $Z_{\text{long}}$.
    \end{enumerate}
\end{defn}

\begin{theorem}[{\cite[Remark 2.3]{Beauville1998FanoContact}}] \label{thm: boothby characterization of hom contact mfld}
    Let $Z \subset \PP(\sfr)$ be a nilpotent orbit.
    For each $[v] \in Z$ and the stabilizer $\nfr_{\sfr}(v) := \{w \in \sfr: [w,\,v] \in \CC \cdot v\}$, define a hyperplane $D_{[v]} := v^{\perp} / \nfr_{\sfr}(v) \subset \sfr / \nfr_{\sfr}(v) \simeq T_{[v]} Z$ where $v^{\perp} := \{x \in \sfr : b_{\sfr}(v,\,x) = 0\}$.
    The hyperplane subbundle of $TZ$ defined as $D = \bigcup_{[v] \in Z} D_{[v]}$ is an $S_{\text{ad}}$-invariant contact structure of $Z$.
\end{theorem}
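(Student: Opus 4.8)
The plan is to realize $D$ as the descent of the kernel of a homogeneous primitive of the Kirillov--Kostant--Souriau (KKS) symplectic form on the orbit cone, and then to identify the Levi tensor with the corresponding symplectic reduction. First I would check that $D$ is a well-defined corank-one subbundle, the only nontrivial point being the inclusion $\nfr_{\sfr}(v)\subseteq v^{\perp}$. Fix a nilpotent $v$ with $[v]\in Z$ and choose (Jacobson--Morozov) an $\mathfrak{sl}_{2}$-triple $(v,h,f)$ with $[h,v]=2v$. Invariance of $b_{\sfr}$ gives $b_{\sfr}(v,v)=0$ and $b_{\sfr}(v,h)=0$; since $\text{ad}(v)$ is skew-adjoint one has $[\sfr,v]=\zfr_{\sfr}(v)^{\perp}$, and as $v=\tfrac12[h,v]\in[\sfr,v]$ this yields $\zfr_{\sfr}(v)\subseteq v^{\perp}$. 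Because $\nfr_{\sfr}(v)=\zfr_{\sfr}(v)+\CC h$, the inclusion $\nfr_{\sfr}(v)\subseteq v^{\perp}$ follows. Under the identification $T_{[v]}Z\cong\sfr/\nfr_{\sfr}(v)$ sending $x+\nfr_{\sfr}(v)$ to $[x,v]\bmod\CC v$, the hyperplane $D_{[v]}$ corresponds to $\{[x,v]\bmod\CC v: x\in v^{\perp}\}$. Invariance of $D$ under $S_{\text{ad}}$ is then immediate, since $v^{\perp}$ and $\nfr_{\sfr}(v)$ transform equivariantly under $\mathrm{Ad}$; in particular it suffices to verify the contact condition at the single point $[v]$.

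For the core of the argument, consider the adjoint orbit $\Ocal_{v}=S_{\text{ad}}\cdot v\subset\sfr$ with its KKS form $\omega_{v}([x,v],[y,v])=b_{\sfr}(v,[x,y])$. Because $\mathrm{Ad}(\exp(sh))v=e^{2s}v$, the orbit $\Ocal_{v}$ is stable under the scaling $m_{t}\colon w\mapsto tw$ and equals the punctured cone $\pi^{-1}(Z)$ over $Z$, where $\pi\colon\sfr\setminus\{0\}\to\PP(\sfr)$; moreover $m_{t}^{\ast}\omega=t\,\omega$, so $\omega$ is homogeneous of degree one for the Euler field $\xi$ (with $\xi_{w}=w$). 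Setting $\alpha:=\iota_{\xi}\omega$, Cartan's formula gives $d\alpha=\mathcal{L}_{\xi}\omega-\iota_{\xi}d\omega=\omega$ and $\iota_{\xi}\alpha=0$, while a short computation with the triple shows $\alpha_{v}([x,v])=-b_{\sfr}(v,x)$. Hence $\ker\alpha_{v}=\{[x,v]:x\in v^{\perp}\}$ is the $\omega_{v}$-orthogonal of $\CC\xi_{v}=\CC v$, it contains $\xi_{v}$, and $d\pi_{v}(\ker\alpha_{v})=D_{[v]}$. Since $\alpha$ is likewise homogeneous of degree one, $\ker\alpha$ is scaling-invariant and descends to exactly the distribution $D$ on $Z$.

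Finally I would identify the Levi tensor with the symplectic reduction of $\omega$. The $\omega_{v}$-orthogonal of the hyperplane $\ker\alpha_{v}$ is the line $\CC\xi_{v}$, which lies inside $\ker\alpha_{v}$; therefore the radical of $\omega_{v}|_{\ker\alpha_{v}}$ is exactly $\CC v$, and $\omega_{v}$ descends to a nondegenerate form $\bar\omega$ on $\ker\alpha_{v}/\CC v=D_{[v]}$. To relate this to $\text{Levi}^{D}$, choose a local section $\sigma$ of $\pi$ through $v$; then $\alpha_{U}:=\sigma^{\ast}\alpha$ is a genuine local contact form on $Z$ with $\ker\alpha_{U}=D$ and $d\alpha_{U}=\sigma^{\ast}\omega$. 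For local sections $X,Y$ of $D$, the identity $\alpha_{U}([X,Y])=-d\alpha_{U}(X,Y)$ shows that, under the trivialization of $\Lcal=TZ/D$ by $\alpha_{U}$, one has $\text{Levi}^{D}_{[v]}(X,Y)=-\bar\omega(X,Y)$, which is nondegenerate. Together with the equivariance already noted, this proves that $D$ is an $S_{\text{ad}}$-invariant contact structure.

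I expect the main obstacle to be this last step: relating the intrinsically defined Levi tensor on $Z$ to the symplectic data on the cone $\Ocal_{v}$. The subtlety is that $\alpha$ lives on the total space $\Ocal_{v}$ and need not descend to a global $1$-form on $Z$ (there may be no global contact form), so the reduction must be performed through a local section $\sigma$, and one must check that the vertical direction $\xi$ lies in the radical of $\omega_{v}|_{\ker\alpha_{v}}$ so that the resulting $\bar\omega$, and hence the conclusion, is independent of $\sigma$.
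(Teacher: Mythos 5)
Your proposal is correct: the key computations all check out — the inclusion $\nfr_{\sfr}(v) \subseteq v^{\perp}$ via the $\mathfrak{sl}_{2}$-triple, the homogeneity $m_{t}^{*}\omega = t\,\omega$ of the KKS form, the identity $\alpha_{v}([x,v]) = -b_{\sfr}(v,x)$ identifying $\ker\alpha_{v}$ with the cone over $D_{[v]}$, and the symplectic-reduction description of the Levi tensor through a local section of $\pi$. The paper itself gives no proof (the statement is quoted from Beauville's Remark 2.3), and your symplectization-of-the-nilpotent-orbit argument is precisely the standard argument of that reference, so the approaches coincide.
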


\begin{rmk} \label{rmk: nilpotent orbit in s}
    \begin{enumerate}
        \item By a slight abuse of notation, we say that an $S_{\text{ad}}$-orbit in the cone of nilpotent orbits $\Ncal \subset \sfr$ is a \emph{nilpotent orbit} in $\sfr$.
        For a nilpotent orbit $Z \subset \PP(\sfr)$, its preimage $\Ocal \subset \sfr$ under the projection $\sfr \setminus \{0\} \rightarrow \PP(\sfr)$ is a single nilpotent orbit, since every $S_{\text{ad}}$-orbit in $\Ncal$ is $\CC^{\times}$-invariant.
        In this case, we write $Z = \PP (\Ocal)$.
        \item If $\sfr$ is simple, then the adjoint variety $Z_{\text{long}}$ is the highest weight orbit of the adjoint representation $\sfr$, i.e., the unique closed $S_{\text{ad}}$-orbit in $\PP(\sfr)$.
        Similarly, its preimage $\Ocal_{\text{min}} \subset \sfr$ under the projection $\sfr \setminus \{0\} \rightarrow \PP(\sfr)$ is the minimal nilpotent orbit, in the sense that $\Ocal_{\text{min}}$ is contained in the closure of every nonzero nilpotent orbit in $\sfr$.
        In this notation, we write $Z_{\text{long}} = \PP (\Ocal_{\text{min}})$.
    \end{enumerate}
\end{rmk}

\begin{proposition} \label{prop: contact line bundle of proj nil orbit}
    If $Z \subset \PP(\sfr)$ is a nilpotent orbit, then its contact line bundle is isomorphic to $\Ocal_{\PP(\sfr)}(1)|_{Z}$.
\end{proposition}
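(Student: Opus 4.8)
The plan is to compute the fiber of the contact line bundle $\Lcal = TZ/D$ at each point and identify it with the fiber of $\Ocal_{\PP(\sfr)}(1)|_{Z}$ by means of the Killing form. First I would fix $[v] \in Z$ and unwind the description in Theorem \ref{thm: boothby characterization of hom contact mfld}: there we have $T_{[v]}Z \simeq \sfr/\nfr_{\sfr}(v)$ and $D_{[v]} = v^{\perp}/\nfr_{\sfr}(v)$, so the fiber of the contact line bundle is
\[
    \Lcal_{[v]} = T_{[v]}Z / D_{[v]} \simeq \sfr / v^{\perp}.
\]
Since $v \ne 0$ and $b_{\sfr}$ is nondegenerate, the linear functional $b_{\sfr}(v,\,-) : \sfr \rightarrow \CC$ is nonzero with kernel exactly $v^{\perp}$; hence $\sfr/v^{\perp}$ is one-dimensional, and passing to duals, the fiber $\Lcal^{*}_{[v]} = \text{Ann}(v^{\perp}) \subset \sfr^{*}$ is precisely the line spanned by $b_{\sfr}(v,\,-)$.

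Next I would globalize this fiberwise identification. The Killing form induces a linear isomorphism
\[
    \beta : \sfr \xrightarrow{\sim} \sfr^{*}, \quad v \mapsto b_{\sfr}(v,\,-),
\]
which carries the tautological line $\CC \cdot v$ (the fiber of $\Ocal_{\PP(\sfr)}(-1)$ at $[v]$) onto the line $\CC \cdot b_{\sfr}(v,\,-) = \Lcal^{*}_{[v]}$. Viewing $\Ocal_{\PP(\sfr)}(-1)|_{Z}$ as a line subbundle of the trivial bundle $\sfr \times Z$ and $\Lcal^{*}$ as a line subbundle of $\sfr^{*} \times Z$, the constant isomorphism $\beta$ of these trivial bundles restricts to a holomorphic bundle isomorphism $\Ocal_{\PP(\sfr)}(-1)|_{Z} \xrightarrow{\sim} \Lcal^{*}$. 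Dualizing yields $\Lcal \simeq \Ocal_{\PP(\sfr)}(1)|_{Z}$, which is the assertion. Since $b_{\sfr}$ is $\text{Ad}$-invariant, $\beta$ is $S_{\text{ad}}$-equivariant, so the isomorphism is moreover equivariant, though this is not needed for the statement.

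I do not expect a serious obstacle here: the proof is a single fiberwise computation made global by one constant linear map. The only points demanding care are the bookkeeping of duals and the sign convention (ensuring that the \emph{tautological} subbundle $\Ocal_{\PP(\sfr)}(-1)|_{Z}$ matches $\Lcal^{*}$ rather than $\Lcal$), together with the observation that $\beta$ is defined on the ambient constant space $\sfr$ and therefore automatically promotes from a fiberwise linear isomorphism to a genuine morphism of holomorphic line bundles.
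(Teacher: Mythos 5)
Your proof is correct and takes essentially the same route as the paper: both reduce to the fiberwise computation $\Lcal_{[v]} \simeq \sfr/v^{\perp}$ coming from Theorem \ref{thm: boothby characterization of hom contact mfld}, and then use the non-degeneracy and invariance of the Killing form to identify this line with $(\CC \cdot v)^{*}$, the fiber of $\Ocal_{\PP(\sfr)}(1)|_{Z}$. The only difference is packaging: the paper globalizes via the associated-bundle description $\Lcal \simeq S_{\text{ad}} \times_{K} (\sfr/v^{\perp}) \simeq S_{\text{ad}} \times_{K} (\CC \cdot v)^{*}$, while you use the constant isomorphism $\beta : \sfr \rightarrow \sfr^{*}$ on the ambient trivial bundle (which implicitly relies on the same fact, namely that the infinitesimal $\sfr$-action realizes $\Lcal$ as a holomorphic quotient of $\sfr \times Z$).
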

\begin{proof}
    Write $Z = S_{\text{ad}} / K$ for the stabilizer $K$ of a point, say $[v] \in Z$.
    The tangent bundle of $Z$ and its contact structure are given by $TZ \simeq S_{\text{ad}} \times_{K} (\sfr / \nfr_{\sfr}(v))$ and $D := S_{\text{ad}} \times_{K} (v^{\perp} / \nfr_{\sfr}(v))$, respectively.
    Thus the contact line bundle is given by $\Lcal := TZ/D \simeq S_{\text{ad}} \times_{K} (\sfr / v^{\perp})$.
    Observe that the Killing form induces a $K$-equivariant isomorphism $(\sfr/v^{\perp}) \simeq (\CC \cdot v)^{*}$, and hence
    \[
        \Lcal \simeq S_{\text{ad}} \times_{K} (\sfr / v^{\perp}) \simeq S_{\text{ad}} \times_{K} (\CC \cdot v)^{*} \simeq \Ocal_{\PP(\sfr)}(1)|_{Z}.
    \]
\end{proof}

\begin{proposition} \label{prop: no adjoint var is contact}
    Let $R \subset S_{\text{ad}}$ be an algebraic subgroup.
    Let $w \in \sfr$ be a nonzero nilpotent element, and $Z := S_{\text{ad}} \cdot [w] \subset \PP(\sfr)$.
    The $R$-orbit $R \cdot [w]$ is an integral subvariety of the contact structure of $Z$ if and only if $w$ is orthogonal to the Lie algebra of $R$ with respect to $b_{\sfr}$.
\end{proposition}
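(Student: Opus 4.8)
The plan is to reduce the global integrality condition to a single linear-algebra check at the base point $[w]$, and then to read off the answer from the descriptions of the tangent space and the contact hyperplane given in Theorem \ref{thm: boothby characterization of hom contact mfld}.

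First I would observe that it suffices to verify the tangency $T_{[w]}(R\cdot[w]) \subset D_{[w]}$ at the single point $[w]$. Indeed, $R\cdot[w] \subset S_{\text{ad}}\cdot[w] = Z$ is a smooth orbit on which $R$ acts transitively, and the contact structure $D$ is $S_{\text{ad}}$-invariant by Theorem \ref{thm: boothby characterization of hom contact mfld}, hence $R$-invariant. Therefore, if $T_{[w]}(R\cdot[w]) \subset D_{[w]}$, then applying the differential of an arbitrary $r \in R$ transports this inclusion to $T_{r\cdot[w]}(R\cdot[w]) \subset D_{r\cdot[w]}$, giving tangency along the whole orbit; the reverse implication is trivial. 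So $R\cdot[w]$ is integral if and only if the inclusion holds at $[w]$.

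Next I would set $\rfr := \text{Lie}(R)$ and work inside the canonical identification $T_{[w]}Z \simeq \sfr/\nfr_\sfr(w)$ induced by the infinitesimal $S_{\text{ad}}$-action, under which the class of $x \in \sfr$ corresponds to the value at $[w]$ of the fundamental vector field of $x$. With respect to this identification, the tangent space to the $R$-orbit is the image of $\rfr$, namely $(\rfr + \nfr_\sfr(w))/\nfr_\sfr(w)$, while $D_{[w]} = w^\perp/\nfr_\sfr(w)$ is the image of $w^\perp$, by the definition of the contact structure in Theorem \ref{thm: boothby characterization of hom contact mfld}. Since both are images in the \emph{same} quotient, the tangency condition at $[w]$ reads $\rfr + \nfr_\sfr(w) \subset w^\perp + \nfr_\sfr(w)$ inside $\sfr$.

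The only point requiring care is that $D_{[w]}$ is genuinely a hyperplane, equivalently that $\nfr_\sfr(w) \subset w^\perp$; this is precisely what makes the quotient description a corank-one subbundle in Theorem \ref{thm: boothby characterization of hom contact mfld}, and it can also be seen directly via an $\mathfrak{sl}_2$-triple through $w$: the normalizer $\nfr_\sfr(w)$ is stable under the grading element $\text{ad}(h)$, its weight $-2$ component is annihilated by $\text{ad}(w)$ (which raises the weight into $\CC w$ of weight $2$) and hence vanishes by injectivity of $\text{ad}(w)$ on negative weight spaces, and $w$ pairs nontrivially under $b_\sfr$ only with the weight $-2$ space. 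Granting $\nfr_\sfr(w) \subset w^\perp$, the right-hand side of the displayed inclusion collapses to $w^\perp$, and the condition becomes simply $\rfr \subset w^\perp$, that is, $b_\sfr(w,x)=0$ for all $x \in \rfr$. As every step is an equivalence, this settles both directions simultaneously. I do not anticipate a serious obstacle; the only bookkeeping subtlety is tracking the two quotients by $\nfr_\sfr(w)$ and confirming $\nfr_\sfr(w)\subset w^\perp$.
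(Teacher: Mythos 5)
Your proof is correct and takes essentially the same route as the paper's: reduce to the base point $[w]$ using the $S_{\text{ad}}$-invariance of the contact structure, then compare the image of $\rfr=\mathrm{Lie}(R)$ with the contact hyperplane $w^{\perp}/\nfr_{\sfr}(w)$ inside $T_{[w]}Z\simeq \sfr/\nfr_{\sfr}(w)$. Your only addition is the explicit $\mathfrak{sl}_{2}$-triple verification of the inclusion $\nfr_{\sfr}(w)\subset w^{\perp}$, which is sound but already built into the description of the contact structure quoted in Theorem \ref{thm: boothby characterization of hom contact mfld}, so the paper simply takes it for granted.
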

\begin{proof}
First, the orbit $R \cdot [w]$ is a (smooth) subvariety of $Z$ since $R$ is algebraic.
Recall that the contact structure of $Z$ at $[w]$ is given by $w^{\perp} / \nfr_{\sfr}(w) \subset \sfr / \nfr_{\sfr}(w) \simeq T_{[w]} Z$.
Under this identification, the tangent space of $R \cdot [w]$ is $T_{e}R \mod \nfr_{\sfr}(w) (= T_{e}R + \nfr_{\sfr}(w) / \nfr_{\sfr}(w))$, and so it is contained in the contact hyperplane $w^{\perp} / \nfr_{\sfr}(w)$ if and only if $T_{e}R \subset w^{\perp}$.
The statement follows since the contact structure of $Z$ is $S_{\text{ad}}$-invariant.
\end{proof}

Our notation for nilpotent orbits is as follows.
As before, $\sfr$ means a semi-simple Lie algebra, and $S_{\text{ad}}$ is its adjoint group.
When $\sfr$ is simple, the $S_{\text{ad}}$-orbit of long (short, respectively) root spaces are denoted by $Z_{\text{long}} \subset \PP(\sfr)$ ($Z_{\text{short}} \subset \PP(\sfr)$, respectively).
For other nilpotent orbits in a projectivized simple Lie algebra, we use the labeling of nilpotent orbits described in \cite{CollingwoodMcGovern1993NilpotentOrbits}.
Here is a brief explanation.

\begin{itemize}
    \item 
If $\sfr$ is a simple Lie algebra of classical type, then we consider its standard representation, and label each nilpotent orbit by the Jordan type of matrices lying in the orbit.
    That is, if $J_{d}$ is a ($d \times d$) elementary Jordan matrix
    \[
        J_{d} := \begin{pmatrix}
             0 & 1 & & & \\
             & 0 & 1 & & \\
             &&0&\ddots& \\
             & & &\ddots &1 \\
             & & & &0
        \end{pmatrix} \quad (d \ge 2), \quad \text{and} \quad J_{1} := (0),
    \]
    then $Z_{[d_{1}, \, \cdots, \, d_{k}]}$ denotes a nilpotent orbit in $\PP(\sfr)$ whose elements are represented by matrices conjugate to a Jordan matrix
    \[
        \begin{pmatrix}
             J_{d_{1}} & & & \\
             & J_{d_{2}} & & \\
             &&\ddots& \\
             & & & J_{d_{k}}
        \end{pmatrix}, \quad d_{1} \ge \cdots \ge d_{k} \ge 1.
    \]
    For example,
    \[
        Z_{\text{long}} = \left\{ \begin{array}{ll}
            Z_{[2,\,1^{r-1}]} & \text{if } \sfr = A_{r} = \mathfrak{sl}(r+1), \\
            Z_{[2,\, 1^{2r-2}]} & \text{if } \sfr = C_{r} = \mathfrak{sp}(2r), \\
            Z_{[2^{2},\, 1^{n-4}]} & \text{if } \sfr = \mathfrak{so}(n), \\
        \end{array} \right.
    \]
    and
    \[
        Z_{\text{short}} = \left\{ \begin{array}{ll}
            Z_{[2^{2},\, 1^{2r-4}]} & \text{if } \sfr = C_{r} = \mathfrak{sp}(2r), \\
            Z_{[3,\, 1^{2r-2}]} & \text{if } \sfr =B_{r} = \mathfrak{so}(2r+1). \\
        \end{array} \right.
    \]
    See \cite[\S5.4]{CollingwoodMcGovern1993NilpotentOrbits}.

\item If $\sfr$ is a simple Lie algebra of exceptional type, then we use the Bala-Carter classification, see \cite[\S 8.4]{CollingwoodMcGovern1993NilpotentOrbits}.
For example,
\[
    Z_{\text{long}} = Z_{A_{1}}, \quad \text{and} \quad Z_{\text{short}} = Z_{\tilde{A}_{1}}.
\]
As another example, when $\sfr = E_{6}$, for a nilpotent element $v \in \sfr$ such that the semi-simple part of a minimal Levi subalgebra containing $v$ is $A_{1} \oplus A_{1}$, then the nilpotent orbit $S_{\text{ad}} \cdot [v]$ is denoted by $Z_{2A_{1}}$.
\end{itemize}

\section{Legendre moduli space} \label{section: Classification of Homogeneous Legendrian Subvarieties}
In this section, we recall the notion of Legendre moduli spaces, introduced by Merkulov \cite{Merkulov1997ExistenceGeometry}, and reduce Theorems \ref{main thm: adjoint}--\ref{main thm: semi simple} to the classification of subalgebras $\ofr < \sfr$ such that $\sfr/\ofr$ is an irreducible $\ofr$-representation.

To be precise, first we recall the construction of the Kodaira map associated to an analytic family of compact submanifolds, introduced by Kodaira \cite{Kodaira1962TheoremCompleteness}.
To do this, in this section, we consider the setting of the holomorphic category.
Namely, every manifold is assumed to be a complex manifold, and every map between manifolds is holomorphic.
Suppose that $Z$ is a manifold and
\begin{center}
    \begin{tikzcd}[row sep=tiny]
        &\Xcal (\subset M \times Z) \arrow[ld, "p"] \arrow[rd, "q"]& \\
        M&& Z
    \end{tikzcd}
\end{center}
is a diagram of an analytic family of compact submanifolds of $Z$.
That is, $M$ is a connected manifold, $\Xcal$ is a submanifold of $M \times Z$, and the natural projection $p:\Xcal \rightarrow M$ is a proper submersion with connected fibers.
For each $t \in M$, put $\Xcal_{t} := q (p^{-1}(t))$, the submanifold corresponding to the point $t$, and choose a point $o \in M$.
Since $p$ is proper, there are finitely many coordinate neighborhoods $U_{i} \subset Z$, $i \in I$, say with coordinate functions $(w_{i}^{1},\,\ldots,\,w_{i}^{c}, \, z_{i}^{1},\, \ldots,\, z_{i}^{d})$, and a coordinate neighborhood $o \in U \subset M$ such that
\begin{itemize}
    \item $\Xcal_{o} \subset \bigcup_{i \in I}U_{i}$,
    \item for each $t \in U$ and $i \in I$, $\Xcal_{t} \cap U_{i}$ is defined by a system of equations $w_{i}^{\lambda} = \varphi_{i}^{\lambda}(t,\, z_{i}^{1},\, \ldots,\, z_{i}^{d})$, $\forall \lambda = 1,\,\ldots, \, c$, and
    \item for each $i \in I$ and $\lambda = 1,\,\ldots, \, c$, $\varphi_{i}^{\lambda}$ is a holomophic function on $U \times U_{i}$ satisfying $\varphi_{i}^{\lambda}|_{o \times U_{i}} = 0$.
\end{itemize}
Put $\varphi_{i} := (\varphi_{i}^{1}, \,\ldots,\, \varphi_{i}^{c})$, a vector-valued function.
For each tangent vector $\pdiff{}{t} \in T_{o} M$, the collection $\{\pdiff{\varphi_{i}}{t}\}_{i \in I}$ satisfies the cocycle condition for being a global section of the normal bundle $N_{\Xcal_{o}/Z}$.
Now the \emph{Kodaira map} is defined to be a $\CC$-linear map
\[
    \kappa : T_{o} M \rightarrow H^{0}(\Xcal_{o},\, N_{\Xcal_{o}/Z}), \quad \pdiff{}{t} \mapsto \left\{\pdiff{\varphi_{i}}{t}\right\}_{i \in I}.
\]

By the local nature of the Kodaira map, one can easily prove the following proposition:
\begin{proposition} \label{prop: kodaira map is natural}
    Let $\Xcal \rightarrow M$ and $\Xcal' \rightarrow M'$ be analytic families of compact submanifolds of manifolds $Z$ and $Z'$, respectively.
    Fix two points $o \in M$ and $o' \in M'$, and suppose that
    \begin{enumerate}
        \item there is a holomorphic map $f : M \rightarrow M'$ with $f(o) =o'$, and
        \item there exists a biholomorphism $F: U \rightarrow U'$ between open subsets $U \subset Z$ and $U' \subset Z'$ such that $\bigcup_{t \in M} \Xcal_{t} \subset U$, $\bigcup_{t' \in M'} \Xcal'_{t'} \subset U'$ and $F(\Xcal_{t}) = \Xcal'_{f(t)}$ for all $t \in M$.
    \end{enumerate}
    Then for the Kodaira maps $\kappa: T_{o}M \rightarrow H^{0}(\Xcal_{o}, \, N_{\Xcal_{o}/Z})$ and $\kappa' : T_{f(o)} M' \rightarrow H^{0}(\Xcal'_{f(o)},\, N_{\Xcal'_{f(o)}/Z'})$, we have a commutative diagram
    \begin{center}
        \begin{tikzcd}
            T_{o}M \arrow[r, "\kappa"] \arrow[d, "d_{o}f"] & H^{0}(\Xcal_{o}, \, N_{\Xcal_{o}/Z}) \arrow[d,"dF"] \\
            T_{f(o)} M' \arrow[r, "\kappa'"] & H^{0}(\Xcal'_{f(o)},\, N_{\Xcal'_{f(o)}/Z'})
        \end{tikzcd}
    \end{center}
    where the rightmost vertical map $dF$ is the isomorphism induced by the differential of the biholomorphism $F$.
\end{proposition}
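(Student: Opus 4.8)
The plan is to reduce the statement to a one-line application of the chain rule, once the coordinates on $Z'$ are chosen so that $F$ becomes the identity on coordinate values. The essential point I would exploit is that the Kodaira map does not depend on the choice of adapted coordinate system (this being part of the well-definedness of the normal-bundle section in Kodaira's construction), so I am free to compute $\kappa'$ in coordinates transported from $Z$ via the biholomorphism $F$. With that freedom, the comparison of $\kappa$ and $\kappa'$ becomes a comparison of the \emph{same} defining functions reparametrized through $f$.

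First I would shrink the coordinate neighborhoods $U_i \subset Z$ so that $U_i \subset U$ for every $i \in I$; this is possible since $\Xcal_o \subset \bigcup_{i} U_i$ and $\Xcal_o \subset U$. Setting $U_i' := F(U_i) \subset U'$, I transport the coordinate functions by $F$, i.e. $w_i'^{\lambda} := w_i^{\lambda} \circ F^{-1}$ and $z_i'^{j} := z_i^{j} \circ F^{-1}$, obtaining adapted coordinate neighborhoods that cover $\Xcal'_{o'} = F(\Xcal_o)$. In these transported coordinates $F$ carries the point with $Z$-coordinates $(w_i,z_i)$ to the point with $Z'$-coordinates of the same numerical values, so the hypothesis $F(\Xcal_t) = \Xcal'_{f(t)}$ says that $\Xcal'_{f(t)} \cap U_i'$ is cut out by $w_i'^{\lambda} = \varphi_i^{\lambda}(t,\, z_i'^{1},\ldots,z_i'^{d})$. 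On the other hand, by the very definition of the family $\Xcal' \rightarrow M'$, the same slice is cut out by $w_i'^{\lambda} = \varphi_i'^{\lambda}(f(t),\, z_i'^{1},\ldots,z_i'^{d})$. By uniqueness of the graph representation for the fixed splitting into $w$- and $z$-coordinates, these defining functions must agree:
\[
    \varphi_i'^{\lambda}\big(f(t),\, z_i'\big) = \varphi_i^{\lambda}\big(t,\, z_i'\big) \qquad \text{for } t \text{ near } o, \ \forall\, i,\ \forall\, \lambda.
\]

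Next I would differentiate this identity. Fixing $\pdiff{}{t} \in T_o M$ and applying the chain rule in $t$, then evaluating at $t = o$, gives
\[
    \pdiff{\varphi_i^{\lambda}}{t} = \left( d_o f \left( \pdiff{}{t} \right) \right) \varphi_i'^{\lambda},
\]
where the right-hand side denotes the derivative of $\varphi_i'^{\lambda}$, viewed as a function of $t' \in M'$, in the direction $d_o f(\pdiff{}{t})$. Reading the left-hand collection $\{\pdiff{\varphi_i^{\lambda}}{t}\}_{i\in I}$ as the coordinate description of $\kappa(\pdiff{}{t}) \in H^{0}(\Xcal_o,\, N_{\Xcal_o/Z})$ and the right-hand collection as that of $\kappa'\big(d_o f(\pdiff{}{t})\big) \in H^{0}(\Xcal'_{o'},\, N_{\Xcal'_{o'}/Z'})$, and noting that in the transported coordinates the induced isomorphism $dF$ on normal-bundle sections is exactly the passage between the $U_i$-cocycles and their $U_i'$-counterparts, the displayed identity reads $dF\big(\kappa(\pdiff{}{t})\big) = \kappa'\big(d_o f(\pdiff{}{t})\big)$, which is the asserted commutativity.

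The main obstacle is not the differentiation, which is immediate, but the bookkeeping that legitimizes it: namely justifying that $\kappa'$ may be computed in the $F$-transported coordinates, and verifying that under these coordinates the bundle map $dF : N_{\Xcal_o/Z} \rightarrow N_{\Xcal'_{o'}/Z'}$ acts as the identity on the coordinate cocycles $\{\pdiff{\varphi_i^{\lambda}}{t}\}$. Both facts rest on the coordinate-independence of Kodaira's construction together with the compatibility of $F$ with the graph representations; once these are in place, the naturality follows at once.
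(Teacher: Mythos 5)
Your proof is correct, and it is exactly the argument the paper has in mind: the paper states this proposition without proof, asserting it follows ``by the local nature of the Kodaira map,'' and your transport-of-coordinates computation (pulling the adapted coordinates through $F$, invoking coordinate-independence of Kodaira's construction, matching the graph-defining functions $\varphi_i^{\lambda}(t,\cdot)=\varphi_i'^{\lambda}(f(t),\cdot)$, and applying the chain rule) is the standard way to flesh that out. The two facts you flag as bookkeeping --- that $\kappa'$ may be computed in any adapted coordinate system, and that $dF$ acts as the identity on the transported cocycles --- are indeed the only points needing care, and both are part of the well-definedness of Kodaira's construction, so there is no gap.
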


Now Merkulov's result can be stated as follows:

\begin{theorem}[{\cite[Theorem 1.1]{Merkulov1997ExistenceGeometry}}] \label{thm: Merkulov result on Legendre moduli}
    Let $Z$ be a contact manifold with contact line bundle $\Lcal$.
    If $X$ is a compact Legendrian submanifold of $Z$ with $H^{1}(X,\,\Lcal|_{X}) = 0$, then there exists a manifold $M$ equipped with a diagram
    \begin{center}
        \begin{tikzcd}[row sep=tiny]
            & \Xcal (\subset M \times Z) \arrow[ld, "p"] \arrow[rd, "q"] & \\
            M && Z
        \end{tikzcd}
    \end{center}
    of an analytic family of compact Legendrian submanifolds of $Z$ containing $X$ such that the family is
    \begin{enumerate}
        \item complete, i.e., for each $t \in M$, the composition of the Kodaira map and the projection
        \[
            T_{t}M \xrightarrow{\kappa} H^{0}(\Xcal_{t},\, N_{\Xcal_{t}/Z}) \rightarrow H^{0}(\Xcal_{t},\, \Lcal|_{X_{t}})
        \]
        is an isomorphism; and
        \item maximal, i.e., for each $t\in M$, if there is another analytic family $M' \xleftarrow{p'} \Xcal' \xrightarrow{q'} Z$ of compact Legendrian submanifolds of $Z$ with $t' \in M'$ satisfying $\Xcal_{t} = \Xcal'_{t'}$, then there exist an open neighborhood $t' \in U' \subset M'$ and a holomorphic function $f : U' \rightarrow M$ such that $f(t') = t$ and $\Xcal_{f(t'')} = \Xcal'_{t''}$ for all $t'' \in U'$.
    \end{enumerate}
    The manifold $M$ is called a \emph{Legendre moduli space} associated to $X \subset Z$.
\end{theorem}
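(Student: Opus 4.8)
The plan is to run Kodaira--Spencer deformation theory adapted to the Legendrian condition, the crucial observation being that for a Legendrian submanifold the normal bundle is the first jet bundle of the contact line bundle, so that both the infinitesimal deformations and the obstructions are governed by $\Lcal|_X$ rather than by the \emph{a priori} larger $N_{X/Z}$. First I would analyze the normal bundle. Since $X$ is Legendrian, $TX \subset D|_X$ is Lagrangian in each fiber of $D$ with respect to the $\Lcal$-valued Levi form $\text{Levi}^{D}$. Nondegeneracy of $\text{Levi}^{D}$ gives a canonical isomorphism $D|_X / TX \simeq T^*X \otimes \Lcal|_X$, and combining this with the defining sequence $0 \to D|_X \to TZ|_X \to \Lcal|_X \to 0$ yields
\[
    0 \longrightarrow T^*X \otimes \Lcal|_X \longrightarrow N_{X/Z} \longrightarrow \Lcal|_X \longrightarrow 0.
\]
This identifies $N_{X/Z}$ with the first jet bundle $J^1(\Lcal|_X)$, the quotient map $N_{X/Z} \to \Lcal|_X$ being the jet projection. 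The point is that a normal vector field is the velocity of a genuinely Legendrian deformation if and only if it is a \emph{holonomic} section, i.e.\ of the form $j^1 s$ for $s \in H^0(X,\Lcal|_X)$; in particular the space of infinitesimal Legendrian deformations is $H^0(X,\Lcal|_X)$, mapped isomorphically onto its image in $H^0(X, N_{X/Z})$ by $s \mapsto j^1 s$, with inverse given by the projection $N_{X/Z} \to \Lcal|_X$.

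Next I would set up the nonlinear deformation problem. Covering $X$ by contact Darboux charts in which $Z$ is the $1$-jet space $J^1(\CC^d)$ (with $\Lcal$ trivialized and Legendrian submanifolds realized as $1$-jet graphs $j^1 f$ of functions), a nearby Legendrian submanifold is encoded chartwise by a generating function, and the transition data on overlaps are twisted by $\Lcal$. Writing the unknown generating function as a formal power series $\sum_{n \ge 1} t^n s_n$ in a deformation parameter and imposing the Legendrian (holonomicity) condition produces, at each order $n$, a linear equation for $s_n$ whose solvability obstruction is a class in $H^1(X,\Lcal|_X)$, with $s_1$ ranging freely over $H^0(X,\Lcal|_X)$. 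The hypothesis $H^1(X,\Lcal|_X)=0$ makes every obstruction vanish, so the series can be constructed order by order; convergence is then obtained by the standard Kodaira majorant/elliptic-estimate method (equivalently, an implicit-function-theorem argument in suitable Banach spaces of sections after fixing a gauge complementary to the prolongation image). This yields a smooth base $M$ of dimension $h^0(X,\Lcal|_X)$ together with the asserted family $p: \Xcal \to M$, $q: \Xcal \to Z$ of compact Legendrian submanifolds through $X$.

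Finally I would verify completeness and maximality. By construction $T_o M$ is identified with $H^0(X,\Lcal|_X)$ so that the composite of the Kodaira map $\kappa$ with the projection $H^0(\Xcal_o, N_{\Xcal_o/Z}) \to H^0(\Xcal_o,\Lcal|_X)$ is the identity, hence an isomorphism; this persists at every nearby point, giving completeness. Maximality then follows by the usual Kodaira--Spencer comparison: given another Legendrian family $\Xcal' \to M'$ with $\Xcal'_{t'} = \Xcal_t$, differentiating and using that $\kappa'$ also lands in holonomic sections produces, via the isomorphism above, a holomorphic classifying map $f$ near $t'$ with $\Xcal'_{t''} = \Xcal_{f(t'')}$, while Proposition \ref{prop: kodaira map is natural} guarantees the compatibility of the two Kodaira maps under $f$. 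The main obstacle is the second step: pinning down that the obstruction to the Legendrian (as opposed to merely submanifold) deformation problem genuinely lives in $H^1(X,\Lcal|_X)$ and not in $H^1(X,N_{X/Z})$. This is exactly where the jet-bundle structure is essential — the contact form forces the $T^*X \otimes \Lcal|_X$-component of any Legendrian velocity to be the derivative of its $\Lcal|_X$-component, collapsing the deformation complex onto one controlled by $\Lcal|_X$ alone — and the analytic existence/convergence step is the remaining technical point, though it is routine once the linearized theory is in place.
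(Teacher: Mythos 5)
This statement is not proved in the paper at all --- it is quoted verbatim, with citation, from \cite[Theorem 1.1]{Merkulov1997ExistenceGeometry} --- so the only meaningful comparison is with Merkulov's original argument, not with anything internal to this paper. Your sketch reconstructs that original argument essentially faithfully (the normal bundle of a Legendrian as the $1$-jet bundle of $\Lcal|_X$, holonomic sections $j^1 s$ as the infinitesimal Legendrian deformations, generating functions in contact Darboux charts with order-by-order obstructions in $H^1(X,\Lcal|_X)$, Kodaira-style majorant convergence, and the usual Kodaira--Spencer comparison for completeness and maximality), with the one caveat that the identification $N_{X/Z}\simeq J^1(\Lcal|_X)$ does not follow formally from the mere existence of the exact sequence $0\to T^*X\otimes\Lcal|_X\to N_{X/Z}\to\Lcal|_X\to 0$ (an abstract extension of $\Lcal|_X$ by $T^*X\otimes\Lcal|_X$ need not be the jet extension); constructing that isomorphism via the contact form is precisely Merkulov's key lemma and should be stated as such rather than asserted.
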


Next, let us apply Merkulov's result to our setting.
Recall that we denote by $\sfr$ a semi-simple Lie algebra.

\begin{defn} \label{defn: homogeneous legendrian}
Let $\lfr \subset \sfr$ be a reductive subalgebra.
A \emph{highest weight $\lfr$-orbit} in $\PP(\sfr)$ means the highest weight orbit in $\PP(V)$ for an irreducible $\lfr$-subrepresentation $V \subset \sfr$.
\end{defn}

\begin{rmk}
    If a projective subvariety $O$ of $\PP(\sfr)$ is homogeneous under the action of a connected algebraic subgroup of $S_{\text{ad}}$ with Lie algebra $\afr$, then it is a highest weight $\afr^{\text{Levi}}$-orbit where $\afr^{\text{Levi}}$ is a Levi subalgebra of $\afr$.
\end{rmk}

\begin{exa} \label{ex: legendrian in type A}
    If $\sfr = \mathfrak{sl}(n+2)$, $n \ge 1$, then $Z_{\text{long}} \simeq \PP T^{*} \PP^{n+1}$.
    The contact structures as an adjoint variety (Theorem \ref{thm: boothby characterization of hom contact mfld}) and as a projectivized cotangent bundle (Example \ref{example: contact str of PTX}) coincide.
    Recall that every Legendrian submanifold of $\PP T^{*} \PP^{n+1}$ is of the form $\PP N^{*}_{Y/\PP^{n+1}}$.
    The following are examples of $Y \subset \PP^{n+1}$ such that $\PP N^{*}_{Y/\PP^{n+1}}$ is homogeneous under the action of its stabilizer:
    \begin{enumerate}
        \item If $\PP^{d} \subset \PP^{n+1}$ is a linear subspace of dimension $d(\le n)$, then $\PP N^{*}_{\PP^{d}/\PP^{n+1}} (\simeq \PP^{d} \times \PP^{n-d})$ is homogeneous under the action of $\text{Stab}_{PGL(n+2)}(\PP^{d})$.
    Thus $\PP N^{*}_{\PP^{d}/\PP^{n+1}}$ is a highest weight ($D_{1}\oplus\mathfrak{sl}(d+1) \oplus \mathfrak{sl}(n+1-d)$)-orbit.
    (Here, $D_{1}$ denotes a 1-dimensional toral subalgebra.)
    \item If $\QQ^{n} \subset \PP^{n+1}$ is a smooth quadric hypersurface, then $\PP N^{*}_{\QQ^{n}/\PP^{n+1}} (\simeq \QQ^{n})$ is a highest weight $\mathfrak{so}(n+2)$-orbit.
    \end{enumerate}
    The subalgebras $D_{1}\oplus\mathfrak{sl}(d+1) \oplus \mathfrak{sl}(n+1-d)$ and $\mathfrak{so}(n+2)$ are symmetric subalgebras of $\mathfrak{sl}(n+2)$, and both $\PP N^{*}_{\PP^{d}/\PP^{n+1}}$ and $\PP N^{*}_{\QQ^{n}/\PP^{n+1}}$ can be obtained as in Theorem \ref{main thm: adjoint}(1).
    This is shown in Propositions \ref{prop: Legendrian for IHSS} (for $\PP^{d} \subset \PP^{n+1}$) and \ref{prop: highest weight orbits for symm iso irre pair} (for $\QQ^{n} \subset \PP^{n+1}$).
\end{exa}

In the following, we say that a group action is \emph{effective} if its kernel is trivial, or equivalently, the identity is the only element acting trivially.

\begin{theorem} \label{thm: iso irre var as Legendrian moduli}
    Let $Z \subset \PP(\sfr)$ be a nilpotent orbit.
    Assume that $O$ is a projective Legendrian subvariety of $Z$ that is homogeneous under the $\text{Stab}_{S_{\text{ad}}}(O)$-action.
    For a coset variety $M := S_{\text{ad}} / \text{Stab}_{S_{\text{ad}}}(O)$ and its base point $o := e \cdot \text{Stab}_{S_{\text{ad}}}(O)$, the diagram
    \[
        \begin{tikzcd}[row sep = tiny]
            & \Xcal:=\{ (g \cdot o,\, z) \in M \times Z \, :\, z \in g \cdot O \} (\simeq S_{\text{ad}} \times^{\text{Stab}_{S_{\text{ad}}}(O)}O) \arrow[ld] \arrow[rd] & \\
            M && Z
        \end{tikzcd}
    \]
    equipped with the natural projections defines a complete and maximal analytic family of compact Legendrian submanifolds of $Z$.
    Moreover, if there is no ideal $\ifr \subset \sfr$ such that $Z \subset \PP(\ifr)$, then for the Lie algebra $\ofr$ of $\text{Stab}_{S_{\text{ad}}}(O)$, we have the following:
        \begin{enumerate}
            \item $S_{\text{ad}}$ acts on $M$ effectively;
            \item $\sfr/\ofr$ is an irreducible $\ofr^{\text{Levi}}$-representation; and
            \item $O \subset \PP(\ofr^{\perp})$ where $\ofr^{\perp} := \{x \in \sfr : b_{\sfr}(x,\,\ofr) = 0\}$.
        \end{enumerate}
\end{theorem}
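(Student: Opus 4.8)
The plan is to establish that the given diagram defines a complete and maximal analytic family of Legendrian submanifolds, and then extract the three structural consequences from the completeness of the family together with the effectivity of the action. I would proceed in the following order.

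\textbf{Step 1: Verifying the family is well-defined and Legendrian.} First I would check that $\Xcal$ is genuinely a submanifold of $M \times Z$ and that $p : \Xcal \to M$ is a proper submersion with fibers $\Xcal_{g\cdot o} = g \cdot O$. Since $O$ is a smooth projective (hence compact) Legendrian subvariety and $S_{\text{ad}}$ acts on $Z$ preserving the $S_{\text{ad}}$-invariant contact structure (Theorem \ref{thm: boothby characterization of hom contact mfld}), each translate $g \cdot O$ is again a compact Legendrian submanifold. The key point is that the stabilizer $\text{Stab}_{S_{\text{ad}}}(O)$ is precisely the isotropy group of the base fiber, so $M = S_{\text{ad}}/\text{Stab}_{S_{\text{ad}}}(O)$ parametrizes the translates faithfully and $\Xcal$ is the incidence variety; properness follows from compactness of $O$ and the algebraicity of the orbit map.

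\textbf{Step 2: Completeness and maximality via Merkulov.} To invoke Theorem \ref{thm: Merkulov result on Legendre moduli}, I would verify the cohomological hypothesis $H^{1}(O,\,\Lcal|_{O}) = 0$. By Proposition \ref{prop: contact line bundle of proj nil orbit}, $\Lcal|_{O} \simeq \Ocal_{\PP(\sfr)}(1)|_{O}$, so this reduces to a vanishing statement for a very ample line bundle on the homogeneous projective variety $O$; this should follow from Bott-type vanishing or from the homogeneity of $O$ under a reductive group. Given the hypothesis, Merkulov produces a Legendre moduli space $M_{\text{Merk}}$ with a complete maximal family. The essential identification is that our $M = S_{\text{ad}}/\text{Stab}_{S_{\text{ad}}}(O)$ \emph{is} this moduli space: I would use Proposition \ref{prop: kodaira map is natural} together with the transitivity of the $S_{\text{ad}}$-action on $M$ to show that the Kodaira map $\kappa : T_{o}M \to H^{0}(O,\,N_{O/Z})$ composed with projection to $H^{0}(O,\,\Lcal|_{O})$ is an isomorphism. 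Homogeneity forces this composite to be $S_{\text{ad}}$-equivariant, and since $T_{o}M \simeq \sfr/\ofr$ is generated by the infinitesimal action, surjectivity (hence bijectivity, by a dimension count against the universal family) yields completeness; maximality then follows from the universal property.

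\textbf{Step 3: The three structural consequences.} Assuming no ideal $\ifr \subset \sfr$ satisfies $Z \subset \PP(\ifr)$: for (1), effectivity of the $S_{\text{ad}}$-action on $M$ amounts to showing the kernel of $S_{\text{ad}} \to \Aut(M)$ is trivial; any element acting trivially fixes every translate $g\cdot O$, hence fixes $Z = S_{\text{ad}}\cdot O$ pointwise, and the no-ideal hypothesis rules out a nontrivial element of the (necessarily central, hence trivial for the adjoint group) kernel. For (2), the isomorphism from completeness identifies $\sfr/\ofr \simeq T_{o}M$ with $H^{0}(O,\,\Lcal|_{O})$ as $\ofr^{\text{Levi}}$-modules; since $O$ is a highest weight orbit and $\Lcal|_{O} \simeq \Ocal(1)|_{O}$, the space of sections $H^{0}(O,\,\Ocal(1)|_{O})$ is the irreducible representation $V^{*}$ generating $O$, which establishes irreducibility of $\sfr/\ofr$ as an $\ofr^{\text{Levi}}$-representation. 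For (3), I would apply Proposition \ref{prop: no adjoint var is contact} to the subgroup $R = \text{Stab}_{S_{\text{ad}}}(O)$: since $O$ is Legendrian (in particular integral) and contains a point $[w]$ with $R$-orbit integral, orthogonality $b_{\sfr}([w],\,\ofr)=0$ holds for every $[w] \in O$, which is exactly the assertion $O \subset \PP(\ofr^{\perp})$.

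\textbf{Main obstacle.} The hardest part will be Step 2 — specifically, verifying $H^{1}(O,\,\Lcal|_{O}) = 0$ uniformly across all the homogeneous Legendrian varieties in question, and then pinning down that the abstract Merkulov moduli space coincides with the coset variety $M$ rather than merely receiving a map from it. The coincidence requires matching dimensions and using maximality carefully, since a priori $M$ could embed as a proper orbit inside a larger moduli space; ruling this out relies on the completeness isomorphism being surjective, which in turn leans on the representation-theoretic computation of $H^{0}(O,\,\Lcal|_{O})$ together with equivariance.
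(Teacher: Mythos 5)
Your overall architecture --- Bott--Borel--Weil vanishing to invoke Theorem \ref{thm: Merkulov result on Legendre moduli}, then identification of $M$ with the Legendre moduli space via equivariance of the Kodaira map --- is the same as the paper's, but the pivotal step is asserted rather than proved. The composite $T_{o}M \xrightarrow{\kappa} H^{0}(O,\,N_{O/Z}) \rightarrow H^{0}(O,\,\Lcal|_{O})$ is equivariant under $\text{Stab}_{S_{\text{ad}}}(O)$ (not under all of $S_{\text{ad}}$, as you write), and since the target $V^{*}$ is irreducible, its image is a subrepresentation: the composite is therefore either surjective or \emph{identically zero}. Your phrase ``generated by the infinitesimal action'' does not exclude the zero case, and this exclusion is the real content. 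The paper rules it out by constructing the comparison map $f : U \rightarrow M'$ into Merkulov's moduli space with $f(o)=[O]$, which is injective because distinct points of $M$ parametrize distinct submanifolds; if the composite vanished identically, Proposition \ref{prop: kodaira map is natural} together with the completeness of $M'$ would force $d_{x}f = 0$ for all $x \in U$, so $f$ would be constant, contradicting injectivity (unless $M$ is a point, which gives $O = Z$, absurd). The same comparison map is what delivers injectivity of the composite: your ``dimension count against the universal family'' is circular, since $\dim M = \dim V$ is precisely what completeness establishes, and you cannot use irreducibility of $\sfr/\ofr$ to kill the kernel because that irreducibility is consequence (2), available only \emph{after} completeness. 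In the paper, surjectivity of the composite makes $d_{x}f$ surjective, hence $f$ a biholomorphism onto its image, and then the composite is an isomorphism as a composition of isomorphisms. (A direct exclusion of the zero case is also possible --- $\kappa_{o}(v + \ofr)$ is the fundamental vector field $v^{\sharp}|_{O}$ modulo $TO$, and if every $v^{\sharp}$ were tangent to $D$ along $O$ then $T_{z}Z = \sfr \cdot z \subset D_{z}$, impossible --- but some such argument must actually be supplied.)

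Your proof of consequence (1) would fail as written. An element acting trivially on $M$ preserves each translate $g \cdot O$ \emph{as a set}; it does not fix $Z$ pointwise, so that inference is a non sequitur. Moreover, the kernel $K_{1}$ of $S_{\text{ad}} \rightarrow \text{Aut}(M)$ is a normal algebraic subgroup, hence a product of simple factors of the adjoint group --- it is not ``necessarily central,'' and a nontrivial such product is exactly what the no-ideal hypothesis must exclude. The paper's argument runs through Proposition \ref{prop: no adjoint var is contact}: $K_{1}$ fixes $o$, so $K_{1} \subset \text{Stab}_{S_{\text{ad}}}(O)$; each $K_{1}$-orbit in $O$ is then an integral subvariety, so every $[w] \in O$ satisfies $b_{\sfr}(w,\, T_{e}K_{1}) = 0$, whence $Z = S_{\text{ad}} \cdot [w] \subset \PP(T_{e}K_{2})$ for the complementary ideal $T_{e}K_{2}$; the hypothesis forces $T_{e}K_{2} = \sfr$, i.e., $K_{1}$ trivial. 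Your arguments for consequences (2) and (3) do match the paper (with (3) using Proposition \ref{prop: no adjoint var is contact} applied to the single orbit $O = \text{Stab}_{S_{\text{ad}}}(O) \cdot [w]$), but they both sit downstream of the completeness isomorphism and the effectivity statement whose proofs are the gaps identified above.
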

\begin{proof}
    Let $V \subset \sfr$ be the irreducible $\ofr^{\text{Levi}}$-subrepresentation $V\subset \sfr$ such that $O \subset \PP(V)$ is the highest weight orbit.
    Since $\Lcal|_{O} \simeq \Ocal_{\PP(V)}(1)|_{O}$ by Proposition \ref{prop: contact line bundle of proj nil orbit}, by the Bott-Borel-Weil theorem, $H^{0}(O,\, \Lcal|_{O}) \simeq V^{*}$ as $\ofr^{\text{Levi}}$-representations while $H^{q}(O,\, \Lcal|_{O}) = 0$, $\forall q \ge 1$.
    In particular, by Theorem \ref{thm: Merkulov result on Legendre moduli}, there exists a Legendre moduli space $M'$ associated to $O$.

    Next, consider the diagram in the statement.
    Since $O$ is a projective subvariety, $\text{Stab}_{S_{\text{ad}}}(O)$ is an algebraic subgroup of $S_{\text{ad}}$, and hence $M$ is also a variety and the $S_{\text{ad}}$-action on $M$ is algebraic.
    Moreover, with respect to the $S_{\text{ad}}$-action on $M \times Z$ (defined by $g \cdot (m,\, z) := (g \cdot m,\, g \cdot z)$), $\Xcal$ is a single orbit, and hence a smooth subvariety of $M \times Z$.
    Since the morphism $\Xcal \rightarrow M$ is a $S_{\text{ad}}$-homogeneous fiber bundle with fiber $\simeq O$, the diagram defines an analytic family of compact Legendrian submanifolds of $Z$.

    To prove that this family is complete and maximal, by homogeneity and by \cite[Lemma 2.2]{Merkulov1997ExistenceGeometry}, it is enough to show that the family is complete at some $x \in M$.
    To see this, for each $x \in M$, consider the composition
    \[
        T_{x}M \xrightarrow{\kappa_{x}} H^{0}(\Xcal_{x},\, N_{\Xcal_{x}/Z}) \xrightarrow{r_{x}} H^{0}(\Xcal_{x},\, \Lcal|_{\Xcal_{x}})
    \]
    where $\kappa_{x}$ is the Kodaira map for $M$ and $r_{x}$ is the restriction map.
    Observe that if $x = g \cdot o$, then $\Xcal_{x} = g \cdot O$, and hence $H^{0}(\Xcal_{x},\, \Lcal|_{\Xcal_{x}}) \simeq (Ad_{g}V)^{*}$ is an irreducible $Ad_{g}(\ofr^{\text{Levi}})$-representation.
    Since $\kappa_{x}$ is $\text{Stab}_{S_{\text{ad}}}(g \cdot O)$-equivariant by Proposition \ref{prop: kodaira map is natural}, we see that $r_{x} \circ \kappa_{x}$ is either zero or surjective.
    Furthermore, by homogeneity (and by Proposition \ref{prop: kodaira map is natural}), we have either $r_{x} \circ \kappa_{x} = 0$ for all $x \in M$, or $r_{x} \circ \kappa_{x}$ is surjective for all $x \in M$.
    On the other hand, since $M'$ is a Legendre moduli space, there exists an open neighborhood $o \in U \subset M$ and a map $f : U \rightarrow M'$ with $f(o) = [O]$.
    Since $\Xcal_{s} \not= \Xcal_{t}$ for $s\not=t \in M$, $f$ is injective.
    By Proposition \ref{prop: kodaira map is natural}, for each $x = g \cdot o \in U$, we have a commutative diagram
    \begin{center}
        \begin{tikzcd}
            T_{x}U = T_{x}M \arrow[rd, "\kappa_{x}"] \arrow[d, "d_{x}f"] & & \\
            T_{f(x)}M' \arrow[r, "\kappa'_{f(x)}"] & H^{0}(\Xcal_{x},\, N_{\Xcal_{x}/Z}) \arrow[r, "r_{x}"] & H^{0}(\Xcal_{x},\, \Lcal|_{\Xcal_{x}}) \simeq (Ad_{g}V)^{*}
        \end{tikzcd}
    \end{center}
    where $\kappa'_{f(x)}$ is the Kodaira map for $M'$.
    If $r_{x} \circ \kappa_{x} = 0$ for all $x \in M$, then since $r_{x} \circ \kappa'_{f(x)}$ is an isomorphism for all $x\in U$, $d_{x}f = 0$ for all $x \in U$, which means that $f$ is a constant map.
    However, since $f$ is injective, $M$ is a point, and hence $O = S_{\text{ad}} \cdot O = Z$, a contradiction.
    Therefore $r_{x} \circ \kappa_{x}$ is surjective for all $x \in M$.
    This implies that $d_{x}f$ is surjective for all $x\in U$, and hence $f$ is an isomorphism since $f$ is injective.
    Therefore $r_{x} \circ \kappa_{x}$ is an isomorphism, i.e., the family is complete at $x \in U$.

    Now assume that for every ideal $\ifr \subsetneq \sfr$, we have $Z \not\subset \PP(\ifr)$.
    We claim that the $S_{\text{ad}}$-action on $M$ is effective.
    If not, then
    \[
        K_{1} := \{g \in S_{\text{ad}} : g \cdot x = x, \, \forall x \in M\}
    \]
    is a nontrivial normal algebraic subgroup of $S_{\text{ad}}$.
    Since $S_{\text{ad}}$ is the adjoint group, $K_{1}$ is the product of some simple factors of $S_{\text{ad}}$, and we can write $S_{\text{ad}} = K_{1} \times K_{2}$, where $K_{2}$ is the product of remaining simple factors of $S_{\text{ad}}$.
    Now by the definition of $K_{1}$, we have $K_{1} \cdot o = o$, i.e., $K_{1} \subset \text{Stab}_{S_{\text{ad}}}(O)$.
    In particular, for every $[w] \in O$, the orbit $K_{1} \cdot [w]$ is contained in $O$, and hence an integral subvariety of $Z$.
    Thus by Proposition \ref{prop: no adjoint var is contact}, $w$ is contained in the orthogonal complement of $T_{e} K_{1}$, which is $T_{e} K_{2}$, an ideal of $\sfr$.
    It implies that $Z = S_{\text{ad}} \cdot [w] \subset \PP(T_{e}K_{2})$.
    By our assumption, we have $T_{e}K_{2} = \sfr$, and thus $K_{1}$ is a finite normal subgroup of $S_{\text{ad}}$.
    However, since $S_{\text{ad}}$ is the adjoint group, the only finite normal subgroup of $S_{\text{ad}}$ is the trivial subgroup.
    Hence $K_{1}$ is trivial, a contradiction.
    
    Therefore under the assumption, the $S_{\text{ad}}$-action on $M$ is effective.
    On the other hand, recall that we have shown that $\sfr/\ofr\simeq T_{o}M \simeq H^{0}(O,\, \Lcal|_{O}) \simeq V^{*}$ as $\ofr^{\text{Levi}}$-representations at the beginning of the proof, and hence $\ofr^{\text{Levi}}$ acts on $\sfr/\ofr$ irreducibly.
    Since $O$ is $\text{Stab}_{S_{\text{ad}}}(O)$-homogeneous, the last statement follows from Proposition \ref{prop: no adjoint var is contact}.
\end{proof}

Remark that Theorem \ref{thm: iso irre var as Legendrian moduli} may not hold for integral but not Legendrian subvarieties.
See Proposition \ref{ex: non-maximal family} for a counter-example.

\section{Classification of isotropy irreducible pairs} \label{section: Isotropy Irreducible Varieties}

As we have seen in Theorem \ref{thm: iso irre var as Legendrian moduli}, we need a classification of reductive subalgebras $\hfr \subset \gfr$ such that there exists a coset variety $G/H$ with $(T_{e}G,\,T_{e}H) = (\gfr,\,\hfr)$, with an effective $G$-action, and with an irreducible isotropy representation.
In this section, we present a classification of such pairs $(\gfr,\,\hfr)$, following \cite{Wolf1968GeometryStructure, Wolf1984CorrectionGeometry}.
In the proof of our main theorems (cf. Section~\ref{section: proof of classification}), $\gfr$ and $\hfr$ shall play the roles of $\sfr$ and $\ofr$ in Theorem~\ref{thm: iso irre var as Legendrian moduli}, respectively, in the case where $\ofr$ is reductive.
For simplicity, we introduce the following definitions:

\begin{defn} \label{defn: isotropy irre var}
    Let $\gfr$ be a reductive algebra and $\hfr$ a reductive subalgebra.
    \begin{enumerate}
        \item We say that the pair $(\gfr,\,\hfr)$ is an \emph{isotropy irreducible pair} if
        \begin{enumerate}
            \item the quotient representation $\gfr/\hfr$ is an irreducible $\hfr$-representation; and
            \item there exist a connected reductive group $G$ with $T_{e}G=\gfr$ and a reductive subgroup $H$ with $T_{e}H = \hfr$ such that the natural $G$-action on the coset variety $G/H$ is effective.
        \end{enumerate}
    In this case, the coset variety $G/H$ is called an \emph{isotropy irreducible variety of type $(\gfr,\,\hfr)$}.

        \item We say that an isotropy irreducible pair $(\gfr,\,\hfr)$ is \emph{symmetric} if $\hfr$ is a symmetric subalgebra of $\gfr$, that is, there is a Lie algebra involution $\theta: \gfr \rightarrow \gfr$ such that $\hfr$ is the $(+1)$-eigenspace of $\theta$.
    \end{enumerate}
    
\end{defn}

In the following, a \emph{Lie subgroup} of a real Lie group means a subgroup that is a closed real submanifold.

\begin{proposition} \label{prop:effective actions}
    Let $G_{\RR}$ be a compact connected real Lie group and $H_{\RR}$ a Lie subgroup of $G_{\RR}$.
    Let $G$ and $H$ be the complexifications of $G_{\RR}$ and $H_{\RR}$, respectively.
    The $G_{\RR}$-action on $G_{\RR}/H_{\RR}$ is effective if and only if the $G$-action on $G/H$ is effective.
\end{proposition}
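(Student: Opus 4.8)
The plan is to translate effectiveness into the triviality of a normal core and then transport this statement between $G_{\RR}$ and $G$ using the standard structure theory of complexifications. Recall that the kernel of the transitive $G$-action on $G/H$ is the normal core $N := \bigcap_{g \in G} gHg^{-1}$, the largest normal subgroup of $G$ contained in $H$; likewise the kernel of the $G_{\RR}$-action on $G_{\RR}/H_{\RR}$ is $N_{\RR} := \bigcap_{g \in G_{\RR}} g H_{\RR} g^{-1}$. Thus the proposition amounts to showing $N = \{e\}$ if and only if $N_{\RR} = \{e\}$. I will use the following facts about the complexification of a compact connected Lie group: $G$ is a connected reductive algebraic group; there is an anti-holomorphic involution (a conjugation) $\sigma : G \to G$ with fixed-point group $G^{\sigma} = G_{\RR}$; the subgroup $G_{\RR}$ is Zariski dense in $G$; and $H = (H_{\RR})^{\CC}$ is the $\sigma$-stable algebraic subgroup with $H \cap G_{\RR} = H_{\RR}$, where $(\cdot)^{\CC}$ denotes the complexification, realized inside $G$ as the Zariski closure.

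First I would record two properties of $N$. Since $\sigma$ is a bijective (real) group automorphism of $G$ with $\sigma(H) = H$, one has $\sigma(N) = \bigcap_{g} \sigma(g) H \sigma(g)^{-1} = N$ (as $\sigma(g)$ runs over all of $G$), so $N$ is $\sigma$-stable; and since $N$ is normal in the reductive group $G$ in characteristic $0$, $N$ is itself reductive. Next I would pin down the relationship between $N$ and $N_{\RR}$ by two inclusions. On one hand, $N \cap G_{\RR}$ is normal in $G_{\RR}$ and contained in $H \cap G_{\RR} = H_{\RR}$, so $N \cap G_{\RR} \subseteq N_{\RR}$. On the other hand, because $N_{\RR}$ is normalized by $G_{\RR}$, its complexification $(N_{\RR})^{\CC}$ is normalized by the Zariski-dense subgroup $G_{\RR}$, hence by all of $G$; moreover $N_{\RR} \subseteq H_{\RR}$ gives $(N_{\RR})^{\CC} \subseteq (H_{\RR})^{\CC} = H$ by monotonicity of Zariski closure. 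Therefore $(N_{\RR})^{\CC}$ is a normal subgroup of $G$ contained in $H$, whence $(N_{\RR})^{\CC} \subseteq N$.

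With these inclusions the proposition reduces to the structural claim that a $\sigma$-stable reductive subgroup $N \le G$ is trivial if and only if its fixed locus $N^{\sigma} = N \cap G_{\RR}$ is trivial. Granting the claim: if $N = \{e\}$ then $(N_{\RR})^{\CC} \subseteq N$ is trivial, and since $N_{\RR} \subseteq (N_{\RR})^{\CC}$ we get $N_{\RR} = \{e\}$; conversely if $N_{\RR} = \{e\}$ then $N^{\sigma} = N \cap G_{\RR} \subseteq N_{\RR}$ is trivial, so the claim gives $N = \{e\}$. To prove the claim, note that $N^{\sigma}$ is closed in the compact group $G_{\RR}$, hence compact, so $\sigma|_{N}$ is an anti-holomorphic involution of the reductive group $N$ with compact fixed locus, i.e. a compact conjugation; thus $N^{\sigma}$ is a maximal compact subgroup of $N$ and $\dim_{\RR} N^{\sigma} = \dim_{\CC} N$. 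In particular $N^{\sigma} = \{e\}$ forces $\dim N = 0$, so $N$ is finite.

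The one remaining point — and, I expect, the most delicate one — is the finite case: I must rule out a nontrivial finite $N$ with $N^{\sigma}$ trivial. Here $N$ finite and normal in the connected group $G$ implies $N \subseteq Z(G)$, so each element of $N$ is a semisimple element of finite order, lying in some maximal torus $T$ of $G$. Under an identification $T \cong (\CC^{\times})^{k}$ compatible with $\sigma$, whose fixed torus $T \cap G_{\RR} \cong U(1)^{k}$ is the compact form, every finite-order element has all coordinates of modulus $1$ and is therefore fixed by $\sigma$; hence $N \subseteq G_{\RR}$, so $N = N^{\sigma}$, and triviality of $N^{\sigma}$ gives $N = \{e\}$. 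This closes the claim, and hence the proposition. I emphasize that this centrality-plus-compactness argument for the component group is the genuine obstacle: the Lie-algebra-level statement (that the largest ideal of $\gfr$ inside $\hfr$ is the complexification of the largest ideal of $\gfr_{\RR}$ inside $\hfr_{\RR}$) only settles the identity components $N^{0}$ and $N_{\RR}^{0}$, and the finite kernels must be handled separately.
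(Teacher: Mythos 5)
Your proof is correct, but it takes a genuinely different route from the paper's. The paper argues directly with kernels of the two actions: for the implication ``effective over $\CC$ $\Rightarrow$ effective over $\RR$'' it uses a complex-analytic density argument (the fixed-point locus in $G/H$ of an element of $G_{\RR}$ is Zariski closed and contains the totally real submanifold $G_{\RR}/H_{\RR}$, hence equals all of $G/H$); for the converse it observes that the kernel $K$ of the $G$-action is a normal, hence reductive, algebraic subgroup, so $K$ is the complexification of a maximal compact subgroup $K_{\RR}$, and then Cartan's conjugacy theorem for maximal compact subgroups places a conjugate of $K_{\RR}$ inside $G_{\RR}$, whereupon normality of $K$ forces $K_{\RR}$, hence $K$, to be trivial. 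You replace the analytic density argument by the purely algebraic inclusions $N_{\RR}\subseteq (N_{\RR})^{\CC}\subseteq N$ (via Zariski density of $G_{\RR}$ in $G$ and closedness of normalizers), and you replace the conjugacy theorem by the real-form dimension count $\dim_{\RR}N^{\sigma}=\dim_{\CC}N$ together with an explicit treatment of the finite case (finite normal $\Rightarrow$ central $\Rightarrow$ torsion elements of a torus have modulus-one coordinates, hence are $\sigma$-fixed). The paper's route is shorter because the conjugacy theorem silently absorbs the finite, possibly disconnected, part of the kernel; your route is more self-contained, and you correctly identify the finite kernel as the genuine content that the Lie-algebra-level statement cannot detect.

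Two small points should be tightened. First, your structural claim is stated for an arbitrary $\sigma$-stable reductive subgroup, but your proof of its finite case uses normality of $N$ in $G$ (to conclude $N\subseteq Z(G)$); state the claim for $\sigma$-stable \emph{normal} reductive subgroups, which is all you apply it to. Second, ``lying in some maximal torus $T$ \dots\ compatible with $\sigma$'' needs one more word: an arbitrary maximal torus containing a given element need not be $\sigma$-stable, but since $N\subseteq Z(G)$ lies in \emph{every} maximal torus, you may take $T=(T_{\RR})^{\CC}$ for a maximal torus $T_{\RR}\le G_{\RR}$, which is $\sigma$-stable with $T^{\sigma}=T_{\RR}$. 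With these adjustments the argument is complete.
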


\begin{proof}
    Recall that $H_{\RR} = G_{\RR} \cap H$ (\cite[Problem 24, \S5.2.5]{OnishchikVinberg1990LieGroups}), and so we can identify $G_{\RR} / H_{\RR}$ with a totally real submanifold $G_{\RR}\cdot (e \cdot H)$ of $G/H$.
    In fact, for any $x \in G_{\RR}\cdot (e \cdot H)$, the real vector space $T_{x}(G_{\RR}\cdot (e \cdot H))$ is a real form of $T_{x}(G/H)$, in the sense that $T_{x}(G/H) = T_{x}(G_{\RR}\cdot (e \cdot H)) \oplus \sqrt{-1} \cdot T_{x}(G_{\RR}\cdot (e \cdot H))$.

    Assume that the $G$-action on $G/H$ is effective.
    Let $g \in G_{\RR}$ be an element such that $g$ acts trivially on $G_{\RR} / H_{\RR}$.
    Since the $G$-action on $G/H$ is algebraic, the fixed-point-locus of $g$ on $G/H$ is Zariski closed.
    Since the fixed-point-locus contains a totally real submanifold $G_{\RR}/H_{\RR}$, it coincides with the whole variety $G/H$, and hence $g= e$.
    Thus the $G_{\RR}$-action on $G_{\RR}/H_{\RR}$ is effective.

    Assume that the $G_{\RR}$-action on $G_{\RR}/H_{\RR}$ is effective.
    If $K$ is the subgroup of $G$ defined by $K=\{g \in G : g \text{ acts on $G/H$ trivially}\}$, then $K$ is a normal algebraic subgroup of $G$ and $K \cap G_{\RR}=\{e\}$.
    Since $G$ is reductive, so is $K$.
    Thus $K$ is the complexification of its maximal compact subgroup, say $K_{\RR}$.
    Since $G_{\RR}$ is a maximal compact sbugroup of $G$, there exists $g \in G$ such that $g\cdot K_{\RR} \cdot g^{-1} \subset G_{\RR}$.
    Since $K$ is a normal subgroup, we see that
    \[
        g \cdot K_{\RR} \cdot g^{-1} \subset (g \cdot K \cdot g^{-1}) \cap G_{\RR} = K \cap G_{\RR} = \{e\}.
    \]
    Hence $K = \{e\}$, i.e., the $G$-action on $G/H$ is effective.
\end{proof}

Now a classification of isotropy irreducible pairs can be deduced from \cite{Wolf1968GeometryStructure, Wolf1984CorrectionGeometry}.
Indeed, in \cite{Wolf1968GeometryStructure, Wolf1984CorrectionGeometry}, there is a classification of Lie subgroups $H_{\RR}$ of a compact connected real Lie group $G_{\RR}$ satisfying the following conditions:
    \begin{enumerate}
        \item the $G_{\RR}$-action on $G_{\RR} / H_{\RR}$ is effective;
        \item if $\gfr_{\RR}$ and $\hfr_{\RR}$ are Lie algebras of $G_{\RR}$ and $H_{\RR}$, respectively, then $\gfr_{\RR}/\hfr_{\RR}$ is an $\hfr_{\RR}$-representation that is irreducible over $\RR$; and
        \item $G_{\RR}/H_{\RR}$ is simply connected.
    \end{enumerate}
By Proposition \ref{prop:effective actions}, our isotropy irreducible pairs $(\gfr,\,\hfr)$ are corresponding to the complexifications of $(\gfr_{\RR},\, \hfr_{\RR})$ in the classification in \cite{Wolf1968GeometryStructure} and \cite{Wolf1984CorrectionGeometry} such that $\gfr_{\RR} / \hfr_{\RR}$ is absolutely irreducible (i.e., irreducible over $\CC$).

\begin{theorem}[{\cite[Theorem 11.1]{Wolf1968GeometryStructure}, \cite{Wolf1984CorrectionGeometry}}] \label{thm: classification of isotropy irre pair}
    Let $(\gfr,\,\hfr)$ be an isotropy irreducible pair with $\dim \gfr > 0$.
    \begin{enumerate}
        \item If $(\gfr,\,\hfr)$ is not symmetric, then $(\gfr,\,\hfr)$ belongs to Table \ref{table: classification of g h}, and the highest weight $\rho$ of $\gfr/\hfr$ is given in the same table.
        In this case, $\gfr$ is simple, $\hfr$ is semi-simple and $\text{rank}(\hfr) < \text{rank}(\gfr)$.
        \item If $(\gfr,\,\hfr)$ is symmetric, then one of the following holds:
        \begin{enumerate}
            \item $\gfr = \mathfrak{so}(2)$ (1-dimensional reductive algebra) and $\hfr = 0$;
            \item $\gfr = \hfr' \oplus \hfr'$ and $\hfr = \text{diag}(\hfr')$ (i.e., $\hfr$ is the diagonal) for a simple Lie algebra $\hfr'$; and
            \item $\gfr$ is simple.
        \end{enumerate}
        In the case, $(\gfr,\,\hfr)$ other than $(\mathfrak{so}(2),\,0)$ belongs to Tables \ref{table: isotropy irreducible pairs of equal rank}--\ref{table: symmetric isotropy irreducible pairs of different rank with g simple}, and the highest weight $\rho$ of $\gfr/\hfr$ is given in the same tables.
    \end{enumerate}
\end{theorem}

Note that Theorem~\ref{thm: classification of isotropy irre pair} is equivalent to the classification of simply connected isotropy irreducible varieties.
Other isotropy irreducible varieties are their finite quotients.

\begin{exa} \label{example: non-symm isotropy irre embedding}
\begin{enumerate}
    \item
    Symmetric isotropy irreducible pairs are exactly the pairs $(T_{e}G,\, T_{e}H)$ arising from irreducible symmetric varieties $G/H$ not of Hermitian type.
    Here, $G/H$ is called \emph{symmetric} if there is an involution $\Theta : G \rightarrow G$ such that $G^{\Theta,\, 0} \subset H \subset G^{\Theta}$.
    A symmetric variety $G/H$ is called \emph{irreducible} if $G/H$ is not locally split into smaller symmetric varieties.
    Up to finite cover, every irreducible symmetric variety $G/H$ is one of the following types:
    \begin{itemize}
        \item (Torus) $G = \CC^{\times}$ and $H = \{e\}$.
        \item (Group) $G = H' \times H'$ and $H = \text{diag}(H')$ for a simple adjoint group $H'$.
        \item (Simple) $G$ is a simple adjoint group, and $H$ is semi-simple.
        \item (Hermitian) $G$ is a simple adjoint group, and $H$ is a Levi part of a parabolic subgroup $P$ such that $G/P$ is Hermitian symmetric and $\text{Aut}(G/P)^{0} = G$.
    \end{itemize}
    See \cite[\S8.10--8.11]{Wolf1984SpaceConstant}, \cite[\S26]{Timashev11book} and \cite[\S2.5]{BKP} for details.
    The highest weights of the isotropy representations $\gfr/\hfr$ can be found in \cite[Proof of Theorem 8.10.9 and (8.11.2)]{Wolf1984SpaceConstant} (when $\rank(\hfr) = \rank(\gfr)$) and \cite[(8.11.5)]{Wolf1984SpaceConstant} (when $\rank(\hfr) < \rank(\gfr)$).
    This information is summarized in Tables \ref{table: isotropy irreducible pairs of equal rank}--\ref{table: symmetric isotropy irreducible pairs of different rank with g simple}.

    \item
    The following are examples of an embedding $\hfr \hookrightarrow \gfr$ such that $(\gfr,\,\hfr)$ is a non-symmetric isotropy irreducible pair:
\begin{enumerate}
    \item The adjoint representation $\hfr \hookrightarrow \gfr:= \mathfrak{so}(\hfr)$ for simple $\hfr$ not of type $A$ (\cite[Corollary 10.2]{Wolf1968GeometryStructure}).
    In Table \ref{table: classification of g h}, No. $15_{n}$, $19_{n}$, $21_{n}$, 24, 26, 27, 28 and 29 correspond to the cases where $\hfr$ is $B_{n}$, $C_{n}$, $D_{n}$, $G_{2}$, $F_{4}$, $E_{6}$, $E_{7}$ and $E_{8}$, respectively.
    \item \label{ex: non symm pair from rational hom sp} Isotropy representations of some rational homogeneous spaces $L/P$ with $L$ simple, $P$ maximal parabolic and $\hfr$ the semi-simple part of the Lie algebra of $P$. More precisely, there is the smallest nonzero $P$-invariant subspace $T_{1}$ in $T_{e \cdot P} (L/P)$, and by comparing \cite[Proposition 2.6]{LandsbergManivel2003ProjectiveGeometry}, \cite[Theorem 11.1]{Wolf1968GeometryStructure} and \cite{Wolf1984CorrectionGeometry}, we have the following examples:
    \begin{enumerate}
        \item $\hfr \hookrightarrow \gfr := \mathfrak{sl}(T_{1})$ induced by an irreducible Hermitian symmetric space $L/P$ such that $\text{Aut}(L/P)^{0} = L$, neither a projective space nor a quadric. In this case, $T_{1} = T_{e \cdot P} (L/P)$.
        In Table \ref{table: classification of g h}, No. $1_{p,\,q}$, 2, 3, $4_{n}$ and $5_{n}$ are the cases where $L/P$ is $\text{Gr}(q,\, \CC^{p+q})$, $\OO\PP^{2}$ (the Cayley plane), $E_{7}/P_{1}$ (the $E_{7}$-Hermitian symmetric space), $\SS_{n}$ (the Spinor variety), and $\text{LG}(n,\, \CC^{2n})$ (the Lagrangian Grassmannian), respectively.
        
        \item $\hfr \hookrightarrow \gfr := \mathfrak{sp}(T_{1})$ induced by the adjoint variety $L/P$ for $L$ not of type $A$, $C$ (Definition \ref{defn: adjoint variety}).
        In Table \ref{table: classification of g h}, No. 6, 7, 8, 9, 10 and $11_{n}$ are the cases where the Lie algebra of $L$ is $G_{2}$, $F_{4}$, $E_{6}$, $E_{7}$, $E_{8}$ and $\mathfrak{so}(n+4)$, respectively.
        
        \item $\hfr \hookrightarrow \gfr := \mathfrak{so}(T_{1})$ induced by the isotropy representation of $\text{IG}(2,\, \CC^{2n+4})$ ($n \ge 3$), the isotropic Grassmannian of a symplectic vector space.
        Here, $\dim T_{1} = 4n$ (and $\codim T_{1} = 3$).
        This corresponds to No. $30_{n}$ in Table \ref{table: classification of g h}.
    \end{enumerate}
    Note that every non-symmetric isotropy pair $(\gfr, \,\hfr)$ with $\gfr$ of type $A$ or $C$ can be obtained in this way.
    
    \item Complexification of isotropy representations of certain Riemannian symmetric spaces. Indeed, these cover all non-symmetric $(\gfr,\,\hfr) \not=(B_{3},\,G_{2})$ with $\gfr$ classical, see \cite{WangZiller1993SymmetricSpaces} for a classification-free proof.
    For example, the previous examples in the item (\ref{ex: non symm pair from rational hom sp}) can be obtained by taking
    \begin{enumerate}
        \item the compact presentation of the Hermitian symmetric space $L/P$,
        \item the positive quaternionic-K\"{a}hler symmetric space of the same type with $L$, and
        \item the quaternionic projective space $\HH\PP^{n}$,
    \end{enumerate}
    respectively.
    See \cite[Tables 1--3]{WangZiller1993SymmetricSpaces}.
    \item The octonion representation $\hfr:= G_{2} \hookrightarrow B_{3} =: \gfr$.
    This pair is No. 23 in Table \ref{table: classification of g h}.
\end{enumerate}
\end{enumerate}
\end{exa}

From now on, we always denote by $(\gfr,\, \hfr)$ an isotropy irreducible pair with $\dim \gfr > 1$ (recall that this assumption implies that $\gfr$ is semi-simple by Theorem \ref{thm: classification of isotropy irre pair}).
Under this assumption, we fix our notation as follows.
First, let $G/H$ be an isotropy irreducible variety of type $(\gfr,\,\hfr)$, and $G_{\text{ad}} := G/Z(G)$ the adjoint group of $\gfr$.
For the Killing form $b_{\gfr}$ of $\gfr$, its restriction on $\hfr$ is non-degenerate (\cite[Theorem 2, \S4.1.1]{OnishchikVinberg1990LieGroups}), and so $\mfr := \{v \in \gfr : b_{\gfr}(v,\, \hfr) =0\}$ is a complementary subspace to $\hfr$ in $\gfr$.
That is, $\gfr = \hfr \oplus \mfr$ as $\hfr$-representations.
Denote by $O_{\mfr} \subset \PP(\mfr)$ the highest weight orbit.
That is, if $H^{0}$ is the identity component of $H$, then $O_{\mfr}$ is a unique closed $H^{0}$-orbit in $\PP(\mfr)$ (in fact, it is an $H$-orbit by Corollary \ref{coro: stabilizer of Om is the normalizer of h}).

Next, we choose a maximal toral subalgebra $\tfr_{H}$ of $\hfr$, and then the weight decompositions of $\hfr$ and $\mfr$ are given as follows:
\[
    \hfr = \tfr_{H} \oplus \bigoplus_{\alpha \in R_{\hfr}} \hfr_{\alpha}, \quad \mfr = \mfr_{0} \oplus \bigoplus_{w \in W} \mfr_{w} \quad (\mfr_{0} = 0 \text{ if and only if } \rank(\gfr) = \rank(\hfr)).
\]
Here, $R_{\hfr}$ is the set of roots of $\hfr$ and $W$ is the set of nonzero $\tfr_{H}$-weights of $\mfr$.
For $\alpha \in R_{\hfr}$ and $w \in W \cup \{0\}$, $E_{\alpha} \in \hfr_{\alpha} - \{0\}$ and $v_{w} \in \mfr_{w} - \{0\}$ mean a root vector of $\hfr$ and a weight vector of $\mfr$, respectively.
We also choose a Borel subalgebra $\bfr_{H} \subset \hfr$ containing $\tfr_{H}$.
The highest weight of $\mfr$ (with respect to $\bfr_{H}$) is denoted by $\rho \in W$ so that $O_{\mfr}$ is the orbit containing $[v_{\rho}] \in \PP(\mfr)$.
In fact, as we know $\rho$ explicitly (Theorem \ref{thm: classification of isotropy irre pair}, Tables \ref{table: classification of g h}--\ref{table: symmetric isotropy irreducible pairs of different rank with g simple}), we can describe $O_{\mfr}$ explicitly: for example, we list $\dim O_{\mfr}$ in Tables \ref{table: classification of g h}--\ref{table: symmetric isotropy irreducible pairs of different rank with g simple}.
For a simple Lie algebra $\hfr_{1}$, its simple roots and the highest root are denoted by $\alpha_{i}^{\hfr_{1}}$ and $\delta^{\hfr_{1}}$, respectively, indexed as in Section \ref{section: Tables}.
Note that the indexing is consistent with \cite{OnishchikVinberg1990LieGroups}.
If there is no ambiguity, we often omit the superscript $\hfr_{1}$.
Finally, we choose a maximal toral subalgebra $\tfr \le \tfr_{H} \oplus \mfr_{0}$ containing $\tfr_{H}$.
The set of roots of $\gfr$ is denoted by $R_{\gfr}$.

Now we prove basic properties of isotropy irreducible pairs.
The following corollary is a direct consequence of Theorem \ref{thm: classification of isotropy irre pair} and Tables \ref{table: classification of g h}--\ref{table: symmetric isotropy irreducible pairs of different rank with g simple}.

\begin{coro} \label{coro: corollary of table, characterization of B3 G2}
\begin{enumerate}
    \item $\rho$ is a root of $\hfr$ (that is, $\rho \in R_{\hfr}$) if and only if $\gfr$ is not simple or $(\gfr,\,\hfr)$ is one of $(B_{3},\, G_{2})$, $(A_{2l-1},\, C_{l})$ ($l \ge 2$), $(D_{p+1},\, B_{p})$ ($p \ge 2$) and $(E_{6},\, F_{4})$.
    \item If $\gfr$ is not simple, then $\rho = \delta$, the highest root of $\hfr$.
    \item If $\rho$ is a root of $\hfr$ and $\gfr$ is simple, $\rho$ is the dominant short root $\delta_{\text{short}}$ of $\hfr$.
\end{enumerate}
\end{coro}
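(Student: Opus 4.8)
The plan is to read off all three claims from the explicit highest weights $\rho$ recorded in Theorem \ref{thm: classification of isotropy irre pair} and Tables \ref{table: classification of g h}--\ref{table: symmetric isotropy irreducible pairs of different rank with g simple}, organised around one elementary principle: since $\rho$ is the highest weight of the $\hfr$-module $\mfr$ it is dominant, and a dominant weight belongs to $R_{\hfr}$ if and only if it equals the highest root $\delta$ or, when $\hfr$ is not simply laced, the dominant short root $\delta_{\text{short}}$ (these being the only dominant roots of a root system). Thus everything reduces to comparing the tabulated $\rho$ with the two distinguished roots $\delta$ and $\delta_{\text{short}}$ of $\hfr$.

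First I would dispose of (2). If $\gfr$ is not simple, then by Theorem \ref{thm: classification of isotropy irre pair}(1)(b) we are in the group case $\gfr = \hfr' \oplus \hfr'$ with $\hfr = \text{diag}(\hfr')$ for a simple $\hfr'$. As $b_{\gfr} = b_{\hfr'} \oplus b_{\hfr'}$, the orthogonal complement is the anti-diagonal $\mfr = \{(y,-y) : y \in \hfr'\}$, on which $(x,x) \in \hfr$ acts by $y \mapsto [x,y]$; hence $\mfr$ is the adjoint representation of $\hfr' \cong \hfr$ and $\rho = \delta$. This proves (2) and simultaneously gives the implication ``$\gfr$ not simple $\Rightarrow \rho \in R_{\hfr}$'' needed for (1).

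Next I would treat $\gfr$ simple, which yields the remaining half of (1) together with all of (3). Assume $\rho \in R_{\hfr}$. Being dominant, $\rho$ is a dominant root of $\hfr$, hence a highest or dominant-short root of a single simple factor $\hfr_0$ of $\hfr$; then $\mfr$, being irreducible with highest weight $\rho$, is acted on trivially by every other simple factor. Each such factor would be a nonzero proper ideal of $\gfr$, impossible as $\gfr$ is simple, so in fact $\hfr = \hfr_0$ is simple and $\rho \in \{\delta,\, \delta_{\text{short}}\}$. The possibility $\rho = \delta$ is likewise excluded: it would force $\mfr \cong \hfr_{\text{ad}}$ and $\dim \gfr = 2 \dim \hfr$, a configuration the classification realises only in the group (non-simple) case. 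Therefore $\rho = \delta_{\text{short}}$, proving (3); and scanning the Tables shows that the simple-$\gfr$ pairs for which $\mfr$ is this short-root module are precisely $(B_{3},\,G_{2})$, $(A_{2l-1},\,C_{l})$, $(D_{p+1},\,B_{p})$ and $(E_{6},\,F_{4})$ --- where $\mfr$ is the $7$-, $(2l^{2}-l-1)$-, $(2p+1)$- and $26$-dimensional representation whose nonzero weights are exactly the short roots of $\hfr$ --- completing the reverse implication of (1).

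The main obstacle is this last, essentially combinatorial, step: confirming across every row of Tables \ref{table: classification of g h}--\ref{table: symmetric isotropy irreducible pairs of different rank with g simple} that no pair other than the four has $\rho \in \{\delta,\, \delta_{\text{short}}\}$. The dominant-root criterion makes each individual check quick, since one compares the listed $\rho$ against only two roots, and in almost all cases $\rho$ is a fundamental weight, a multiple of one, or a tensor-product highest weight that is visibly too large to be a root; but the verification must still be carried out uniformly over the whole list. The genuinely conceptual input is confined to the ideal argument that rules out $\rho = \delta$ and forces $\hfr$ simple, while the remainder is bookkeeping against the classification.
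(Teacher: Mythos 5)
Your proposal is correct and takes essentially the same route as the paper: the paper offers no separate argument, stating the corollary as a direct consequence of Theorem \ref{thm: classification of isotropy irre pair} and Tables \ref{table: classification of g h}--\ref{table: symmetric isotropy irreducible pairs of different rank with g simple}, which is exactly the table-by-table verification you carry out. Your dominant-root criterion (a dominant weight lies in $R_{\hfr}$ only if it is $\delta$ or $\delta_{\text{short}}$ of a single simple factor) and the ideal argument forcing $\hfr$ simple when $\gfr$ is simple are sound and make the scan efficient, but the substance of the proof — reading the classification — is the same.
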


\begin{proposition} \label{prop: maximalify of h and H}
    \begin{enumerate}
        \item $\hfr$ is a maximal subalgebra of $\gfr$.
        \item For the normalizer $N_{G_{\text{ad}}}(\hfr)$ of $\hfr$, the coset variety $G_{\text{ad}}/N_{G_{\text{ad}}}(\hfr)$ is an isotropy irreducible variety of type $(\gfr,\, \hfr)$.
        \item Under the quotient map $G \rightarrow G_{\text{ad}}$, the image of any algebraic subgroup of $G$ with Lie algebra $\hfr$ is contained in $N_{G_{\text{ad}}}(\hfr)$. In particular, there is a $G$-equivariant finite morphism $G/H \rightarrow G_{\text{ad}}/N_{G_{\text{ad}}}(\hfr)$.
    \end{enumerate}
\end{proposition}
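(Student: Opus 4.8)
The plan is to isolate one consequence of effectiveness and then read off all three items from it. The key fact is: \emph{the largest ideal of $\gfr$ contained in $\hfr$ is zero}. Indeed, the kernel of the $G$-action on $G/H$ is $\bigcap_{g \in G} gHg^{-1}$, and its Lie algebra is $\{x \in \gfr : \mathrm{Ad}(g)x \in \hfr \text{ for all } g \in G\}$; as $G$ is connected this subspace is $\mathrm{ad}(\gfr)$-stable, so it is precisely the largest ideal of $\gfr$ lying in $\hfr$, and effectiveness forces it to vanish. Observe also that $\hfr \neq 0$, since otherwise $\gfr = \gfr/\hfr$ would be a one-dimensional irreducible $\hfr$-module, contradicting $\dim\gfr > 1$. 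Item (1) is then immediate from irreducibility alone: if $\hfr \subsetneq \kfr \subseteq \gfr$ is an intermediate subalgebra then $[\hfr,\kfr]\subseteq\kfr$, so the image of $\kfr$ in $\gfr/\hfr$ is a nonzero $\hfr$-submodule, which by irreducibility must be all of $\gfr/\hfr$; thus $\kfr = \gfr$ and $\hfr$ is maximal.

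For item (2) I would check the two clauses of Definition \ref{defn: isotropy irre var} for the pair $(G_{\text{ad}}, N_{G_{\text{ad}}}(\hfr))$. Its Lie algebra is the normalizer $\mathfrak{n}_{\gfr}(\hfr)$, which by maximality equals $\hfr$ or $\gfr$; it cannot be $\gfr$, for that would make the nonzero $\hfr$ an ideal of $\gfr$, against the key fact. Hence $T_e N_{G_{\text{ad}}}(\hfr) = \hfr$, and since this Lie algebra is reductive, its identity component---and therefore $N_{G_{\text{ad}}}(\hfr)$ itself---is a reductive group in characteristic zero. Effectiveness of the $G_{\text{ad}}$-action on $G_{\text{ad}}/N_{G_{\text{ad}}}(\hfr)$ again follows from the key fact: the kernel is normal in $G_{\text{ad}}$ with Lie algebra the largest ideal of $\gfr$ inside $\hfr$, namely $0$, so it is finite; being a finite normal subgroup of the connected centerless group $G_{\text{ad}}$, it is trivial. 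Together with the given irreducibility of $\gfr/\hfr$, this identifies $G_{\text{ad}}/N_{G_{\text{ad}}}(\hfr)$ as an isotropy irreducible variety of type $(\gfr,\hfr)$.

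For item (3), let $H'$ be any algebraic subgroup of $G$ with $T_e H' = \hfr$ and let $\pi : G \to G_{\text{ad}}$ be the quotient by $Z(G)$. For $h \in H'$, conjugation by $h$ preserves $H'$ and hence $\mathrm{Ad}(h)$ preserves $\hfr$; as $\mathrm{Ad}$ factors through $G_{\text{ad}}$, this gives $\pi(h) \in N_{G_{\text{ad}}}(\hfr)$, so $\pi(H') \subseteq N_{G_{\text{ad}}}(\hfr)$. Taking $H' = H$ produces a well-defined $G$-equivariant morphism $G/H \to G_{\text{ad}}/N_{G_{\text{ad}}}(\hfr)$, $gH \mapsto \pi(g)N_{G_{\text{ad}}}(\hfr)$, which I would identify with the natural projection $G/H \to G/\pi^{-1}(N_{G_{\text{ad}}}(\hfr))$. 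Because $\mathrm{d}\pi$ is an isomorphism of Lie algebras, $\pi^{-1}(N_{G_{\text{ad}}}(\hfr))$ has Lie algebra $\hfr = T_e H$, so $H$ has finite index in it and the fibers are finite. The one point I expect to require real care is promoting this quasi-finiteness to a genuine finite morphism; here $G$-equivariance between homogeneous spaces saves the day, since every fiber is a translate of the finite fiber over the base point and the map is the quotient by a finite group, hence finite (indeed \'etale in characteristic zero). Apart from the reductivity of the normalizer and this finiteness, every step is a direct consequence of the key fact and of the irreducibility of $\gfr/\hfr$.
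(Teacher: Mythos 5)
Your proof is correct and, in outline, matches the paper's: item (1) follows from irreducibility of $\gfr/\hfr$; item (2) from maximality of $\hfr$ together with the fact that a finite normal algebraic subgroup of the connected, centerless group $G_{\text{ad}}$ is trivial; item (3) from the observation that an algebraic subgroup normalizes its own Lie algebra, plus the standard finiteness of $G/H \rightarrow G/\pi^{-1}(N_{G_{\text{ad}}}(\hfr))$. The one genuine difference is a pleasant one: where the paper invokes the classification (Theorem \ref{thm: classification of isotropy irre pair}) to rule out that $\hfr$ is an ideal and to see that $\hfr$ contains no simple factor of $\gfr$, you derive the stronger statement that the largest ideal of $\gfr$ contained in $\hfr$ is zero directly from effectiveness of the $G$-action on $G/H$. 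This makes the argument classification-free, and it simultaneously handles both places where the paper appeals to the tables. You also explicitly check reductivity of $N_{G_{\text{ad}}}(\hfr)$, a requirement of Definition \ref{defn: isotropy irre var} that the paper's own proof passes over in silence.

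That reductivity step is, however, the one justification that needs repair: ``the Lie algebra is reductive, hence the identity component is a reductive group in characteristic zero'' is false as a general principle --- $\mathbb{G}_{a}$ is connected with abelian (hence reductive, as an abstract Lie algebra) Lie algebra but is not a reductive group. The fix is immediate in your setting: $N_{G_{\text{ad}}}(\hfr)^{0}$ and the image of $H^{0}$ under $G \rightarrow G_{\text{ad}}$ are connected algebraic subgroups of $G_{\text{ad}}$ with the same Lie algebra $\hfr$, hence they coincide; the latter is a quotient of the reductive group $H^{0}$, hence reductive, and an algebraic group is reductive if and only if its identity component is. With this one-line patch your argument is complete.
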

\begin{proof}
    \begin{enumerate}
        \item It follows from the irreducibility of $\gfr / \hfr$.
        \item Since the Lie algebra of $N_{G_{\text{ad}}}(\hfr)$ contains $\hfr$, by the maximality of $\hfr$, it is either $\hfr$ or $\gfr$. By Theorem \ref{thm: classification of isotropy irre pair}, $\hfr < \gfr$ is not an ideal, and so its Lie algebra is $\hfr$.
        In fact, $\hfr$ does not contain any simple factor of $\gfr$, and so
        \[
            \overline{G}:=\{g \in G_{\text{ad}}: g \text{ acts trivially on }G_{\text{ad}}/N_{G_{\text{ad}}}(\hfr) \}
        \]
        is a finite subgroup, since $\overline{G}$ is a normal algebraic subgroup of $G_{\text{ad}}$ contained in $N_{G_{\text{ad}}}(\hfr)$.
        Since $G_{\text{ad}}$ is the adjoint group, $\overline{G} = \{e\}$, i.e., the $G$-action on $G_{\text{ad}}/N_{G_{\text{ad}}}(\hfr)$ is effective.
        \item It suffices to observe that every algebraic subgroup stabilizes its Lie algebra.
    \end{enumerate}
\end{proof}

\begin{coro} \label{coro: stabilizer of Om is the normalizer of h}
    The stabilizer $\text{Stab}_{G}(O_{\mfr})$ of $O_{\mfr} \subset \PP(\gfr)$ in $G$ is the preimage of $N_{G_{\text{ad}}}(\hfr)$ under the quotient map $G \rightarrow G_{\text{ad}}$.
\end{coro}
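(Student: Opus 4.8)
The plan is to reduce the statement to the adjoint group $G_{\text{ad}}$ and then to recover $\hfr$ from $O_{\mfr}$ by purely linear-algebraic means. First I would observe that the $G$-action on $\PP(\gfr)$ is the (projectivized) adjoint action, so it factors through the quotient $\pi : G \rightarrow G_{\text{ad}}$ whose kernel is $Z(G)$: for $g \in G$ one has $g \cdot O_{\mfr} = \pi(g) \cdot O_{\mfr}$. Consequently $\text{Stab}_{G}(O_{\mfr}) = \pi^{-1}(\text{Stab}_{G_{\text{ad}}}(O_{\mfr}))$, and it suffices to prove the equality $\text{Stab}_{G_{\text{ad}}}(O_{\mfr}) = N_{G_{\text{ad}}}(\hfr)$ inside $G_{\text{ad}}$. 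Throughout I would regard an element $g \in G_{\text{ad}} \subset \Aut(\gfr)$ as an automorphism of $\gfr$; in particular $g$ preserves the Killing form $b_{\gfr}$, so $g(\hfr) = \hfr$ holds if and only if $g(\mfr) = \mfr$, since $\mfr = \hfr^{\perp}$ and orthogonal complements are exchanged by $b_{\gfr}$-isometries.

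For the inclusion $N_{G_{\text{ad}}}(\hfr) \subseteq \text{Stab}_{G_{\text{ad}}}(O_{\mfr})$, I would take $g$ with $g(\hfr) = \hfr$. Then $g(\mfr) = \mfr$ by the remark above, and $g$ normalizes the connected subgroup $H_{\text{ad}} \subset G_{\text{ad}}$ with Lie algebra $\hfr$ (since $g H_{\text{ad}} g^{-1}$ is connected with Lie algebra $g(\hfr) = \hfr$). Hence $g$ carries closed $H_{\text{ad}}$-orbits in $\PP(\mfr)$ to closed $H_{\text{ad}}$-orbits: for $h \in H_{\text{ad}}$ one computes $h \cdot g(O_{\mfr}) = g \cdot (g^{-1} h g) \cdot O_{\mfr} = g(O_{\mfr})$, so $g(O_{\mfr})$ is a closed, $H_{\text{ad}}$-stable subvariety consisting of a single $H_{\text{ad}}$-orbit. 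Since $\mfr$ is an irreducible $\hfr$-representation, $\PP(\mfr)$ has a unique closed $H_{\text{ad}}$-orbit, namely $O_{\mfr}$; therefore $g(O_{\mfr}) = O_{\mfr}$.

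For the reverse inclusion I would recover $\mfr$ directly from $O_{\mfr}$. Because $\mfr$ is irreducible and $O_{\mfr}$ is its highest weight orbit, the cone over $O_{\mfr}$ linearly spans $\mfr$ (its span is a nonzero $\hfr$-submodule, hence all of $\mfr$). Thus any $g \in \text{Stab}_{G_{\text{ad}}}(O_{\mfr})$, acting linearly on $\gfr$, preserves this span, i.e. $g(\mfr) = \mfr$; by the Killing-form duality of the first paragraph this forces $g(\hfr) = \hfr$, so $g \in N_{G_{\text{ad}}}(\hfr)$. Combining the two inclusions yields $\text{Stab}_{G_{\text{ad}}}(O_{\mfr}) = N_{G_{\text{ad}}}(\hfr)$, and pulling back along $\pi$ gives the corollary.

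The two facts about highest weight orbits that I expect to be the crux are the spanning property and the uniqueness of the closed orbit in $\PP(\mfr)$; both follow from the irreducibility of $\mfr$ as an $\hfr$-representation (Definition \ref{defn: isotropy irre var}). This is where I would be most careful, in particular in matching the closed $H^{0}$-orbits used to define $O_{\mfr}$ from $G/H$ with the closed $H_{\text{ad}}$-orbits after passing to $G_{\text{ad}}$; these coincide because both group actions on $\PP(\gfr)$ factor through the same image in $G_{\text{ad}}$.
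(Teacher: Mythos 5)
Your proof is correct, and one of its two halves coincides with the paper's: to show $N_{G_{\text{ad}}}(\hfr) \subseteq \text{Stab}_{G_{\text{ad}}}(O_{\mfr})$, the paper likewise notes that the normalizer preserves $\PP(\mfr)$ by invariance of the Killing form, and then identifies $g \cdot O_{\mfr}$ with the unique closed orbit of the connected subgroup with Lie algebra $\hfr$ in $\PP(\mfr)$ (the paper phrases this with $N^{0} = N_{G_{\text{ad}}}(\hfr)^{0}$ rather than your $H_{\text{ad}}$, but these are the same subgroup, as you correctly observe at the end). Where you genuinely differ is the reverse inclusion $\text{Stab}_{G_{\text{ad}}}(O_{\mfr}) \subseteq N_{G_{\text{ad}}}(\hfr)$: the paper disposes of it by citing Proposition \ref{prop: maximalify of h and H}, i.e.\ the Lie algebra of the stabilizer contains $\hfr$, hence by maximality of $\hfr$ (and the fact that $\hfr$ is not an ideal) equals $\hfr$, and an algebraic subgroup normalizes its own Lie algebra. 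You instead recover $\mfr$ intrinsically from $O_{\mfr}$: the affine cone over the highest weight orbit spans the irreducible module $\mfr$, so any stabilizing element preserves $\mfr$ and therefore, by Killing-form orthogonality, preserves $\hfr = \mfr^{\perp}$. Your route buys self-containedness — it needs neither the maximality statement nor the classification input (Theorem \ref{thm: classification of isotropy irre pair}) that $\hfr$ is not an ideal of $\gfr$, only the two irreducibility facts you flag (spanning of $\mfr$ by the cone, uniqueness of the closed orbit in $\PP(\mfr)$) — while the paper's route buys brevity by reusing an already-established proposition. Your explicit reduction $\text{Stab}_{G}(O_{\mfr}) = \pi^{-1}(\text{Stab}_{G_{\text{ad}}}(O_{\mfr}))$, via the fact that the adjoint action of the connected group $G$ on $\PP(\gfr)$ has kernel $Z(G)$, is left implicit in the paper but is exactly what its statement requires.
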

\begin{proof}
    Define $N:=N_{G_{\text{ad}}}(\hfr)$ and let $N^{0}$ be its identity component.
    By Proposition \ref{prop: maximalify of h and H}, it is enough to show that $N$ stabilizes $O_{\mfr}$.
    First, since $N$ stabilizes $\PP(\hfr)$ and $b_{\gfr}$ is $N$-invariant, $\PP(\mfr)$ is also $N$-stable, and so $g \cdot O_{\mfr} \subset \PP(\mfr)$ for $g \in N$.
    Since $N^{0} \cdot (g \cdot O_{\mfr}) = g \cdot (g^{-1}N^{0}g) \cdot O_{\mfr} = g \cdot (N^{0} \cdot O_{\mfr}) = g \cdot O_{\mfr}$, $g \cdot O_{\mfr}$ is a closed $N^{0}$-orbit, and hence it is equal to $O_{\mfr}$ by the irreducibility of $\mfr$.
\end{proof}

\begin{proposition} \label{prop:Zm exists}
    $O_{\mfr}$ is an integral subvariety of the contact structure of a nilpotent orbit $Z_{\mfr} := G_{\text{ad}} \cdot O_{\mfr} \subset \PP(\gfr)$.
\end{proposition}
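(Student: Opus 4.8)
The plan is to reduce the statement to two points: first, that $Z_{\mfr} = G_{\text{ad}} \cdot O_{\mfr}$ is genuinely a nilpotent orbit, which amounts to showing that a highest weight vector $v_{\rho} \in \mfr$ is a nilpotent element of $\gfr$; and second, that $O_{\mfr}$ is tangent to the contact structure of $Z_{\mfr}$. The second point will drop out immediately from Proposition \ref{prop: no adjoint var is contact} once the first is in hand, precisely because $\mfr = \hfr^{\perp}$ by construction. So essentially all the content lies in the nilpotency of $v_{\rho}$.

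For the nilpotency, I would argue purely through an $\textrm{ad}$-grading. Since $\rho \in W$ is a \emph{nonzero} weight of $\tfr_{H}$, I can choose $H_{0} \in \tfr_{H}$ with $c := \rho(H_{0}) \ne 0$. As $H_{0}$ lies in a toral subalgebra of $\hfr$, it is semisimple in $\gfr$, so $\textrm{ad}(H_{0})$ is diagonalizable and produces an eigenspace decomposition $\gfr = \bigoplus_{a} \gfr^{(a)}$, where $\gfr^{(a)} := \{x \in \gfr : [H_{0},x] = a x\}$, with only finitely many eigenvalues $a$, and with $[\gfr^{(a)}, \gfr^{(b)}] \subseteq \gfr^{(a+b)}$ by the Jacobi identity. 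The weight condition $[H_{0}, v_{\rho}] = \rho(H_{0}) v_{\rho} = c v_{\rho}$ places $v_{\rho} \in \gfr^{(c)}$ with $c \ne 0$. Hence $\textrm{ad}(v_{\rho})$ carries $\gfr^{(a)}$ into $\gfr^{(a+c)}$, so $\textrm{ad}(v_{\rho})^{N}$ carries $\gfr^{(a)}$ into $\gfr^{(a+Nc)}$; as the eigenvalue set is finite and $c \ne 0$, this vanishes for $N$ large. Thus $\textrm{ad}(v_{\rho})$ is nilpotent and $v_{\rho}$ is a nilpotent element of $\gfr$. (The same argument shows more generally that every weight vector of $\mfr$ with nonzero $\tfr_{H}$-weight is nilpotent.)

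Granting this, $Z_{\mfr} = G_{\text{ad}} \cdot O_{\mfr} = G_{\text{ad}} \cdot [v_{\rho}]$ is the orbit of a nonzero nilpotent element, hence a nilpotent orbit in $\PP(\gfr)$ in the sense of Definition \ref{defn: adjoint variety}, carrying the invariant contact structure of Theorem \ref{thm: boothby characterization of hom contact mfld}. To conclude integrality, let $R \subseteq G_{\text{ad}}$ be the connected subgroup with Lie algebra $\hfr$ (the image of $H^{0}$ acting through the adjoint representation), so that $O_{\mfr} = R \cdot [v_{\rho}]$ is the unique closed $R$-orbit in $\PP(\mfr)$. Since $v_{\rho} \in \mfr = \{x \in \gfr : b_{\gfr}(x,\hfr) = 0\}$, the element $v_{\rho}$ is orthogonal to $\textrm{Lie}(R) = \hfr$ with respect to $b_{\gfr}$. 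Proposition \ref{prop: no adjoint var is contact}, applied with this $R$ and $w = v_{\rho}$, then says exactly that $R \cdot [v_{\rho}] = O_{\mfr}$ is an integral subvariety of the contact structure of $Z_{\mfr}$, which is what we want.

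I do not expect a serious obstacle: the only step needing an actual argument is the nilpotency of $v_{\rho}$, and the single subtlety there is that a sum of root vectors need not be nilpotent. This is why I invoke the $\textrm{ad}(H_{0})$-grading, for which $v_{\rho}$ is a genuine eigenvector with nonzero eigenvalue, rather than attempting to place $v_{\rho}$ directly into the nilradical of a parabolic. Everything else is formal bookkeeping with the decomposition $\gfr = \hfr \oplus \mfr$ and the orthogonality $\mfr = \hfr^{\perp}$.
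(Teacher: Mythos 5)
Your proof is correct and follows essentially the same route as the paper: the paper likewise reduces everything to the nilpotency of $v_{\rho}$ (which it gets by citing \cite[Proposition 2.2]{Beauville1998FanoContact}, using that $T_{H}$ acts nontrivially on $\mfr_{\rho}$) and then concludes integrality from Proposition \ref{prop: no adjoint var is contact} together with $v_{\rho} \in \mfr = \hfr^{\perp}$. The only difference is that you prove the nilpotency criterion inline via the $\textrm{ad}(H_{0})$-eigenspace grading instead of citing it, which is exactly the standard proof of the cited result, so the two arguments coincide in substance.
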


\begin{proof}
    If $T_{H}$ is the maximal torus of $H$ with Lie algebra $\tfr_{H}$, then since $T_{H}$ acts nontrivially on $\mfr_{\rho}$, $v_{\rho}$ is a nilpotent element of $\gfr$ by \cite[Proposition 2.2]{Beauville1998FanoContact}.
    That is, $Z_{\mfr} (= G_{\text{ad}} \cdot [v_{\rho}])$ is a nilpotent orbit.
    Now the statement follows from Proposition \ref{prop: no adjoint var is contact}.
\end{proof}

From now on, we keep the notation of Proposition \ref{prop:Zm exists}: $Z_{\mfr}$ is a nilpotent orbit for $\gfr$, containing $O_{\mfr}$ as an integral subvariety.

\begin{rmk} \label{rmk: Legendrian when symm}
Proposition \ref{prop:Zm exists} is well known when $(\gfr,\,\hfr)$ is symmetric.
In fact, using \cite[Proposition 4.31]{BKP}, one can show that $O_{\mfr}$ is a Legendrian subvariety of $Z_{\mfr}$ if $(\gfr,\,\hfr)$ is symmetric.
This fact is recovered in Propositions \ref{prop:Om when g is not simple} and \ref{prop: highest weight orbits for symm iso irre pair} by using the classification of symmetric pairs.
\end{rmk}

If $O_{\mfr}$ is not Legendrian in $Z_{\mfr}$, then $M:=G_{\text{ad}}/\text{Stab}_{G_{\text{ad}}}(O_{\mfr})$ may not parametrize a maximal family of deformations into integral submanifolds in $Z_{\mfr}$ as in Theorem \ref{thm: iso irre var as Legendrian moduli}.
The following, which is not used in the rest of this paper, is an example:

\begin{proposition}\label{ex: non-maximal family}
    Let $(\gfr,\,\hfr)$ be the isotropy irreducible pair $(G_{2},\, A_{1})$ in No. 31, Table \ref{table: classification of g h}.
    Then there exists a maximal family of deformations of $O_{\mfr}$ as integral submanifolds of $Z_{\mfr}$ and the parameter space of the family is of dimension 23.
\end{proposition}

In particular, since $M = G_{\text{ad}}/\text{Stab}_{G_{\text{ad}}}(O_{\mfr})$ is of dimension $\dim G_{2} - \dim A_{1} = 11$ by Corollary~\ref{coro: stabilizer of Om is the normalizer of h}, we see that the family parametrized by $M$ is not maximal.

\begin{proof}[Proof of Proposition~\ref{ex: non-maximal family}]
    In Proposition~\ref{prop: rho is a highest root sp}, we shall show that $Z_{\mfr} = Z_{\text{long}}$, and for now let us assume this.
    Since $\mfr$ is the 10th symmetric power of the standard representation of $\mathfrak{sl}_{2}$, $O_{\mfr}$ is a smooth rational curve of degree 10 in $\PP(G_{2})$.
    For the contact line bundle $\Lcal$ on $Z_{\text{long}}$, we have $\Lcal|_{O_{\mfr}} \simeq \Ocal_{\PP^{1}}(10)$ by Proposition \ref{prop: contact line bundle of proj nil orbit}.
    If $D$ is the contact structure of $Z_{\text{long}}$, then since $\dim Z_{\text{long}} = 5$, $\rank(D) = 4$, $T O_{\mfr}$ is a line subbundle of $D|_{O_{\mfr}}$, and
    \[
        T O_{\mfr}^{\perp} := \{ v \in D_{x} : x\in O_{\mfr}, \ \text{Levi}^{D}_{x}(v,\, T_{x}O_{\mfr}) = 0 \}
    \]
    is a subbundle of $D|_{O_{\mfr}}$ of rank $3$, containing $T O_{\mfr}$.
    We claim that for the quotient bundle $S_{O_{\mfr}} := TO_{\mfr}^{\perp} / TO_{\mfr}$ (of rank 2), we have a short exact sequence
    \[
        0 \rightarrow \Ocal_{\PP^{1}}(4) \rightarrow S_{O_{\mfr}} \rightarrow \Ocal_{\PP^{1}}(6) \rightarrow 0.
    \]
    In fact, if the claim is true, then by \cite[Main Theorem, Ch. 4]{Ali03Thesis}, there exists a maximal family of deformations of $O_{\mfr}$ as integral submanifolds of $Z_{\text{long}}$ such that its parameter space is a complex manifold of dimension
    \[
        h^{0}(O_{\mfr},\, \Lcal|_{O_{\mfr}}) + h^{0}(O_{\mfr},\, S_{O_{\mfr}}) = 23,
    \]
    which proves the statement. To prove the claim, consider the weight decompositions
    \[
        \hfr = \hfr_{-1} \oplus \hfr_{0} \oplus \hfr_{1}, \quad \mfr = \bigoplus_{k=-5}^{5} \mfr_{k}
    \]
    where $\hfr_{k}$ and $\mfr_{k}$ are weight spaces of weight $k \cdot \alpha$ for the positive root $\alpha$ of $\hfr(=A_{1})$.
    Note that each weight space is of dimension 1, and so for the identity component $H$ of $\text{Stab}_{G_{\text{ad}}}(O_{\mfr})$, we have $O_{\mfr} = H \cdot [\mfr_{5}]$ and $Z_{\text{long}} = G_{\text{ad}} \cdot [\mfr_{5}]$.
    The Lie algebra of $P:=\text{Stab}_{G_{\text{ad}}}([\mfr_{5}])$ is given by
    \[
        l \oplus \hfr_{0} \oplus \hfr_{1} \oplus \bigoplus_{k \ge 0} \mfr_{k}
    \]
    for some 1-dimensional subspace $l < \hfr_{-1} \oplus \mfr_{-1}$ (with $l\not= \hfr_{-1}$).
    This shows that as $P$-representations,
    \[
        T_{[\mfr_{5}]} Z_{\text{long}} \simeq (\hfr \oplus \mfr) / \left( l \oplus \hfr_{0} \oplus \hfr_{1} \oplus \bigoplus_{k \ge 0} \mfr_{k} \right), \quad D_{[\mfr_{5}]} \simeq \left(\hfr \oplus \bigoplus_{k\ge -4}\mfr_{k}\right) / \left( l \oplus \hfr_{0} \oplus \hfr_{1} \oplus \bigoplus_{k \ge 0} \mfr_{k} \right).
    \]
    Since $T_{[\mfr_{5}]}O_{\mfr}$ is spanned by $\hfr$,
    \[
        (T O_{\mfr}^{\perp})_{[\mfr_{5}]} \simeq \left(\hfr \oplus \bigoplus_{k\ge -3}\mfr_{k}\right) / \left( l \oplus \hfr_{0} \oplus \hfr_{1} \oplus \bigoplus_{k \ge 0} \mfr_{k} \right),
    \]
    and hence
    \[
        (S_{O_{\mfr}})_{[\mfr_{5}]} \simeq \left(\hfr \oplus \bigoplus_{k\ge -3}\mfr_{k}\right) / \left(\hfr \oplus \bigoplus_{k\ge -1}\mfr_{k}\right) \simeq \bigoplus_{k\ge -3}\mfr_{k}/ \bigoplus_{k\ge -1}\mfr_{k}
    \]
    as $H \cap P$-representations.
    Now if we put $H$-homogeneous line bundles over $O_{\mfr} (\simeq H/H\cap P)$
    \[
        \Lcal_{-2} := H \times^{H \cap P} \left(\bigoplus_{k\ge -2}\mfr_{k} / \bigoplus_{k\ge -1}\mfr_{k}\right), \quad \Lcal_{-3} := H \times^{H \cap P} \left(\bigoplus_{k\ge -3}\mfr_{k}/\bigoplus_{k\ge -2}\mfr_{k}\right),
    \]
    then there is a short exact sequence of $H$-homogeneous vector bundles
    \[
        0 \rightarrow \Lcal_{-2} \rightarrow S_{O_{\mfr}} \rightarrow \Lcal_{-3} \rightarrow 0.
    \]
    Since $\alpha$ is 2 times the fundamental weight, by the Bott-Borel-Weil theorem, we see that $\Lcal_{-2} \simeq \Ocal_{\PP^{1}}(4)$ and $\Lcal_{-3} \simeq \Ocal_{\PP^{1}}(6)$.
    Therefore the claim follows.
\end{proof}

\section{Homogeneous Legendrian subvarieties arising from isotropy representations} \label{section: Structure of Isotropy Representation}

In this section, we show that each item in Theorems \ref{main thm: adjoint}--\ref{main thm: semi simple} indeed defines a Legendrian subvariety.
While it is well known that symmetric subalgebras define Legendrian subvarieties of some nilpotent orbits (see \cite[Proposition 4.31]{BKP}), for the sake of completeness, we also record its proof (see Propositions \ref{prop: Legendrian for IHSS}, \ref{prop:Om when g is not simple} and \ref{prop: highest weight orbits for symm iso irre pair}).

First, we consider highest weight $\lfr$-orbits where $(\sfr,\,\lfr)$ is a symmetric pair of Hermitian type (belonging to Theorem \ref{main thm: adjoint}(1)).
That is, $\lfr$ is a Levi subalgebra of a parabolic subalgebra corresponding to an irreducible Hermitian symmetric space.

\begin{proposition} \label{prop: Legendrian for IHSS}
    Assume that $\sfr$ is simple.
    Suppose that $\pfr < \sfr$ is a parabolic subalgebra such that for the associated parabolic subgroup $P < S_{\text{ad}}$, $S_{\text{ad}}/P$ is an irreducible Hermitian symmetric space and $\text{Aut}(S_{\text{ad}}/P)^{0} = S_{\text{ad}}$.
    There exists a unique closed $P$-orbit $O$ in $\PP(\sfr)$, and moreover, $O$ is a Legendrian subvariety of the adjoint variety $Z_{\text{long}} \subset \PP(\sfr)$.
    The list of $O$ is given in Table \ref{table: Legendrian asso to IHSS}.
\end{proposition}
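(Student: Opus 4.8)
The plan is to exploit the $\ZZ$-grading of $\sfr$ attached to the Hermitian symmetric space $S_{\text{ad}}/P$ and to compare it with the contact grading determined by $e_{\delta}$. Since $S_{\text{ad}}/P$ is irreducible Hermitian symmetric, $\pfr$ is the maximal parabolic associated with the simple root $\alpha_{i}$ occurring with coefficient $1$ in the highest root $\delta$. This yields a grading $\sfr = \gfr_{-1}\oplus\gfr_{0}\oplus\gfr_{1}$ with $\pfr = \gfr_{0}\oplus\gfr_{1}$, in which the nilradical $\gfr_{1}$ is abelian, $\gfr_{0}$ is the reductive Levi, and $\gfr_{\pm 1}$ are irreducible $\gfr_{0}$-representations; moreover $e_{\delta}\in\gfr_{1}$ spans its highest weight line. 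I will call this the \emph{Hermitian grading}, and write the \emph{contact grading} $\sfr = \bigoplus_{j=-2}^{2}\sfr_{j}$ for the one induced by the $\mathfrak{sl}(2)$-triple of $e_{\delta}$, so that $\sfr_{2} = \CC e_{\delta}$ and $\sfr_{-2} = \CC f_{\delta}$.

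First I would establish uniqueness and the identification of $O$. As the adjoint representation $\sfr$ is irreducible with one-dimensional highest weight space, $[e_{\delta}]$ is the unique $B$-fixed point of $\PP(\sfr)$ for a Borel subgroup $B\subset P$. Any closed $P$-orbit, being a projective $B$-variety, contains a $B$-fixed point by Borel's fixed point theorem, hence contains $[e_{\delta}]$; thus there is at most one closed $P$-orbit, namely $O := P\cdot[e_{\delta}]$. Because the unipotent radical of $P$ has the abelian Lie algebra $\gfr_{1}$ and so fixes $e_{\delta}$, we have $O = L\cdot[e_{\delta}]$, the highest weight orbit in $\PP(\gfr_{1})$, which is projective and therefore closed. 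This proves existence and uniqueness at once and identifies $O$ with the highest weight orbit in $\PP(\gfr_{1})$; the explicit description in Table \ref{table: Legendrian asso to IHSS} is then read off from the classification of irreducible Hermitian symmetric spaces and their (irreducible) isotropy representations.

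It remains to prove that $O$ is Legendrian in $Z_{\text{long}}$. The containment $O\subset Z_{\text{long}}$ is immediate from $O = P\cdot[e_{\delta}]\subset S_{\text{ad}}\cdot[e_{\delta}] = Z_{\text{long}}$. Since the Killing form pairs $\gfr_{i}$ with $\gfr_{-i}$, the element $e_{\delta}\in\gfr_{1}$ is orthogonal to $\pfr = \gfr_{0}\oplus\gfr_{1}$, so $O$ is an integral subvariety by Proposition \ref{prop: no adjoint var is contact}. The substantive point, which I expect to be the main obstacle, is to upgrade ``integral'' to ``Legendrian'', i.e.\ to verify $\dim Z_{\text{long}} = 2\dim O + 1$; this is exactly where the two gradings must be reconciled. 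Via the contact grading one computes $T_{[e_{\delta}]}Z_{\text{long}}\cong\sfr_{-1}\oplus\sfr_{-2}$ and $D_{[e_{\delta}]}\cong\sfr_{-1}$, with the Levi form given by the bracket $\sfr_{-1}\times\sfr_{-1}\to\sfr_{-2}=\CC f_{\delta}$ (nondegenerate by Theorem \ref{thm: boothby characterization of hom contact mfld}), and $T_{[e_{\delta}]}O$ identified with the image of $\pfr$, namely the span of the root spaces $\sfr_{\alpha}$ whose Hermitian grade lies in $\{0,1\}$ and whose contact grade is $-1$.

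To finish I would argue purely in terms of grades. Since $-\delta$ has contact grade $-2$ and Hermitian grade $-1$, no root contributes to the $\sfr_{-2}$-component, recovering integrality. For a root $\alpha$ of contact grade $-1$, nondegeneracy of the Levi form forces $-\delta-\alpha$ to again be a root of contact grade $-1$; additivity of the Hermitian grade then gives $\mathrm{Herm}(-\delta-\alpha) = -1-\mathrm{Herm}(\alpha)\in\{-1,0,1\}$, so that contact-grade-$(-1)$ roots have Hermitian grade in $\{-1,0\}$ only, and the involution $\alpha\mapsto-\delta-\alpha$ interchanges the grade-$0$ and grade-$(-1)$ parts. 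Each part is isotropic (a sum of two of its roots has Hermitian grade $\neq -1$, hence cannot equal $-\delta$), so the two parts are complementary Lagrangian subspaces of $\sfr_{-1}$; as $T_{[e_{\delta}]}O$ is exactly the Hermitian-grade-$0$ part (grade $1$ not occurring at contact grade $-1$), it is Lagrangian. This gives $\dim O = \tfrac{1}{2}\dim D_{[e_{\delta}]} = \tfrac{1}{2}(\dim Z_{\text{long}}-1)$, so $O$ is Legendrian.
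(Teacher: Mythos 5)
Your proof is correct, and its key step is genuinely different from the paper's. You agree with the paper on most of the setup: uniqueness of the closed $P$-orbit (the paper asserts it directly from simplicity of $\sfr$; your Borel fixed-point argument is a fuller version of the same fact), the identification of $O$ as the Levi-highest-weight orbit inside the abelian nilradical, and integrality via Killing-orthogonality of $e_{\delta}$ to $\pfr$ combined with Proposition \ref{prop: no adjoint var is contact}. Where you diverge is the crucial dimension count $2\dim O + 1 = \dim Z_{\text{long}}$: the paper obtains this by reading off $\dim O$ and $\dim Z_{\text{long}}$ case by case from the classification of irreducible Hermitian symmetric spaces (Table \ref{table: Legendrian asso to IHSS}), whereas you prove it uniformly by playing the cominuscule $1$-grading against the contact $2$-grading of $h_{\delta}$: every root of contact grade $-1$ has Hermitian grade $0$ or $-1$; the involution $\alpha \mapsto -\delta - \alpha$ (well defined by nondegeneracy of the Levi form, or alternatively by $\mathfrak{sl}(2)$-theory) swaps these two Hermitian grades; each graded piece of $D_{[e_{\delta}]} \simeq \sfr_{-1}$ is isotropic by additivity of the Hermitian grade; and $T_{[e_{\delta}]}O$ is exactly the grade-$0$ piece, hence Lagrangian. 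Your argument is classification-free and explains structurally why the Hermitian (cominuscule) condition forces the Legendrian property, so it would survive in settings where no table is available; the paper's argument is shorter given that the table must be compiled anyway for the final assertion of the proposition — and note that you, too, still invoke the classification to produce that explicit list, just not to establish the Legendrian property itself.
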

\begin{proof}
    Since $\sfr$ is simple, $P$ has a unique closed orbit $O$ in $\PP(\sfr)$, which is the orbit containing the highest root space.
    Thus we have $Z_{\text{long}} = S_{\text{ad}} \cdot O$.
    Moreover, if we denote by $P^{\text{Levi}}$ a Levi subgroup of $P$, then $O$ is $P^{\text{Levi}}$-homogeneous.
    In fact, if we denote by $\pfr^{u}$ the unipotent radical of $\pfr$, then $O$ is the highest weight orbit of the irreducible $P^{\text{Levi}}$-representation $\pfr^{u}$ whose highest weight is the highest root $\delta$ of $\sfr$.
    Now $O$ can be read off from the well-known classification of irreducible Hermitian symmetric spaces: see Table \ref{table: Legendrian asso to IHSS}.
    In particular, we conclude that $2 \dim O + 1 = \dim Z_{\text{long}}$.
    Finally, observe that $O$ is an integral subvariety since $T_{e}P^{\text{Levi}}$ and $\pfr^{u}$ are orthogonal to each other and by Proposition \ref{prop: no adjoint var is contact}
\end{proof}

Next, we consider isotropy irreducible pairs $(\gfr,\,\hfr)$ with $\dim \gfr > 1$ (since the remaining cases in Theorems \ref{main thm: adjoint}--\ref{main thm: semi simple} are $O_{\mfr} \subset Z_{\mfr}$ for some isotropy irreducible pairs $(\gfr,\,\hfr)$).
More precisely, we determine when the integral subvariety $O_{\mfr} \subset Z_{\mfr}$ for $(\gfr,\,\hfr)$ is Legendrian.

The following is the case where $\gfr$ is not simple (corresponding to Theorem \ref{main thm: semi simple}(2.a)).

\begin{proposition} \label{prop:Om when g is not simple}
Assume that $\gfr$ is not simple, i.e., $\gfr = \hfr' \oplus \hfr'$ and $\hfr = \text{diag}(\hfr')$ for some simple Lie algebra $\hfr'$.
If $\Ocal_{\text{min}} \subset \hfr'$ is the minimal nilpotent orbit (see Remark \ref{rmk: nilpotent orbit in s}), then $O_{\mfr} = \PP (\{(v \oplus (-v)) \in \gfr : v \in \Ocal_{\text{min}}\})$ and $Z_{\mfr} = \PP(\Ocal_{\text{min}} \oplus \Ocal_{\text{min}})$.
In particular, $O_{\mfr}$ is a Legendrian subvariety of $Z_{\mfr}$, and its dimension is given in Table~\ref{table: symmetric isotropy irreducible pairs of different rank with g simple}.
\end{proposition}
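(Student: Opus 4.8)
The plan is to make the $\hfr$-module $\mfr$ completely explicit, recognize $O_{\mfr}$ as a disguised copy of the minimal nilpotent orbit of $\hfr'$, and then compute the $G_{\text{ad}}$-sweep $Z_{\mfr}$ by hand, after which the Legendrian property drops out of a dimension count combined with Proposition \ref{prop:Zm exists}. First I would observe that the Killing form of $\gfr = \hfr' \oplus \hfr'$ is the orthogonal direct sum of the Killing forms of the two simple factors, so the orthogonal complement of the diagonal $\hfr = \text{diag}(\hfr')$ is the anti-diagonal $\mfr = \{v \oplus (-v) : v \in \hfr'\}$. The assignment $v \mapsto v \oplus (-v)$ is moreover an isomorphism of $\hfr$-representations from the adjoint representation of $\hfr'$ onto $\mfr$, since $(w \oplus w) \cdot (v \oplus (-v)) = [w,v] \oplus (-[w,v])$ corresponds to $\text{ad}_{w} v$; in particular the highest weight of $\mfr$ is the highest root of $\hfr'$, in agreement with Corollary \ref{coro: corollary of table, characterization of B3 G2}(2).

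Under this isomorphism the highest weight orbit $O_{\mfr} \subset \PP(\mfr)$ is carried to the highest weight orbit of the adjoint representation of $\hfr'$, namely the adjoint variety $Z_{\text{long}} \subset \PP(\hfr')$, whose affine cone is $\Ocal_{\text{min}}$ by Remark \ref{rmk: nilpotent orbit in s}(2). Transporting back yields $O_{\mfr} = \PP(\{v \oplus (-v) : v \in \Ocal_{\text{min}}\})$, which is the first assertion. To compute $Z_{\mfr}$ I would write $G_{\text{ad}} = H'_{\text{ad}} \times H'_{\text{ad}}$ and act with $(g_{1},\, g_{2})$ on a cone point $v \oplus (-v)$, producing $(g_{1} \cdot v) \oplus (-(g_{2} \cdot v))$. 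The inclusion of this sweep into $\Ocal_{\text{min}} \oplus \Ocal_{\text{min}}$ is immediate; for the reverse inclusion, given $w_{1},\, w_{2} \in \Ocal_{\text{min}}$ I fix any $v \in \Ocal_{\text{min}}$ and use transitivity of $H'_{\text{ad}}$ on $\Ocal_{\text{min}}$ to solve $g_{1} \cdot v = w_{1}$ and $g_{2} \cdot v = -w_{2}$. This gives $Z_{\mfr} = \PP(\Ocal_{\text{min}} \oplus \Ocal_{\text{min}})$.

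Finally, setting $d := \dim Z_{\text{long}}$ for the adjoint variety of $\hfr'$, so that $\dim \Ocal_{\text{min}} = d + 1$ and $\dim O_{\mfr} = d$, I obtain $\dim Z_{\mfr} = 2(d+1) - 1 = 2d + 1 = 2 \dim O_{\mfr} + 1$. Since $O_{\mfr}$ is an integral subvariety of $Z_{\mfr}$ by Proposition \ref{prop:Zm exists}, this dimension equality forces its tangent spaces to be Lagrangian rather than merely isotropic, i.e. $O_{\mfr}$ is Legendrian. The one point requiring genuine care is the reverse inclusion in the computation of $Z_{\mfr}$: the two entries of a cone point $v \oplus (-v)$ share the same $v$, so decoupling them into an arbitrary pair $(w_{1},\, w_{2})$ relies not only on transitivity of the $H'_{\text{ad}}$-action but also on the fact that $\Ocal_{\text{min}}$ is $\CC^{\times}$-stable, hence invariant under multiplication by $-1$. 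This is exactly what makes $-w_{2} \in \Ocal_{\text{min}}$ and lets me solve $g_{2} \cdot v = -w_{2}$; everything else is a routine computation or a direct appeal to Proposition \ref{prop:Zm exists}.
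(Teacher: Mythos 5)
Your proof is correct and takes essentially the same route as the paper's: identify $\mfr$ with the anti-diagonal copy of the adjoint representation of $\hfr'$, compute the $\text{diag}(H')$- and $(H' \times H')$-orbits of $[E_{\delta} \oplus (-E_{\delta})]$ for a highest root vector $E_{\delta}$, and deduce Legendrianity from the dimension count $\dim Z_{\mfr} = 2\dim O_{\mfr} + 1$ together with the integrality supplied by Proposition \ref{prop:Zm exists}. The paper is merely terser, leaving implicit the product-orbit decoupling that you spell out (solving $g_{1} \cdot v = w_{1}$, $g_{2} \cdot v = -w_{2}$ using the $\CC^{\times}$-invariance of $\Ocal_{\text{min}}$).
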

\begin{proof}
Let $H'$ be the adjoint group of $\hfr'$ so that $G_{\text{ad}} = H' \times H'$, and then $O_{\mfr}$ and $Z_{\mfr}$ are homogeneous under the action of $\text{diag}(H')$ and $H' \times H'$, respectively.
Let $E_{\delta}$ be the highest root vector of $\hfr'(\simeq \hfr)$.
    Since $\mfr = \{x \oplus (-x) \in \gfr : x \in \hfr'\}$, $O_{\mfr}$ and $Z_{\mfr}$ contain $[E_{\delta} \oplus (-E_{\delta})]$, and hence $O_{\mfr} = \PP (\{(v \oplus (-v)) \in \gfr : v \in \Ocal_{\text{min}}\})$ and $Z_{\mfr} = \PP(\Ocal_{\text{min}} \oplus \Ocal_{\text{min}})$.
    Thus we have $\dim O_{\mfr} = \dim \Ocal_{\text{min}} -1$ and $\dim Z_{\mfr} = 2 \dim \Ocal_{\text{min}} - 1$.
\end{proof}

It remains to consider the case where $\gfr$ is simple.
In such a case, the following observation is useful:

\begin{proposition} \label{prop: rho is a root sp of g if it is not a root of h}
    If $\rho$ is not a root of $\hfr$ (see Corollary \ref{coro: corollary of table, characterization of B3 G2}), then $\mfr_{\rho}$ is a root space of $\gfr$ with respect to $\tfr$.
\end{proposition}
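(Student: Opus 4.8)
The plan is to exhibit $\mfr_{\rho}$ as a one-dimensional eigenspace of the maximal toral subalgebra $\tfr$ on which $\tfr$ acts through a nonzero character, which forces it to be a root space of $\gfr$. First I would record that $\dim_{\CC}\mfr_{\rho}=1$: since $\mfr\simeq\gfr/\hfr$ is an irreducible $\hfr$-representation and $\rho$ is its highest weight with respect to $\bfr_{H}$, the weight space $\mfr_{\rho}$ is exactly the (one-dimensional) highest weight line.

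The key step is to show that the line $\mfr_{\rho}$ is stable under all of $\tfr$, and this is precisely where the hypothesis enters. Since $\tfr_{H}\subseteq\tfr\subseteq\tfr_{H}\oplus\mfr_{0}$, I can write $\tfr=\tfr_{H}\oplus\tfr'$ with $\tfr':=\tfr\cap\mfr_{0}$. The summand $\tfr_{H}$ preserves $\mfr_{\rho}$ by the very definition of a weight space. For the remaining directions, $[\tfr',\mfr_{\rho}]\subseteq[\mfr_{0},\mfr_{\rho}]$, and a short Jacobi-identity computation shows that $[\mfr_{0},\mfr_{\rho}]$ has $\tfr_{H}$-weight $\rho$; hence it lies in the $\rho$-weight space of $\gfr=\hfr\oplus\mfr$, which is the direct sum of the weight-$\rho$ subspaces of $\hfr$ and of $\mfr$. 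Because $\rho$ is assumed not to be a root of $\hfr$ (and $\rho\neq0$), the $\hfr$-summand vanishes, leaving only $\mfr_{\rho}$. Thus $[\tfr',\mfr_{\rho}]\subseteq\mfr_{\rho}$, and combining the two contributions gives $[\tfr,\mfr_{\rho}]\subseteq\mfr_{\rho}$.

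To conclude, since $\tfr$ is abelian and preserves the line $\mfr_{\rho}$, it acts on $\mfr_{\rho}$ through a single linear functional $\beta\in\tfr^{*}$ whose restriction to $\tfr_{H}$ equals $\rho$. As $\rho\in W$ is a nonzero weight, $\beta\neq0$, so $\mfr_{\rho}$ is contained in the $\beta$-weight space of $\gfr$ under $\tfr$ for a nonzero $\beta$; for the semi-simple $\gfr$ with maximal toral subalgebra $\tfr$, such a weight space is a one-dimensional root space. Comparing dimensions yields $\mfr_{\rho}=\gfr_{\beta}$, a root space of $\gfr$ with respect to $\tfr$, as claimed.

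I expect the only genuinely delicate point to be the $\tfr$-invariance of $\mfr_{\rho}$: a priori $[\mfr_{0},\mfr_{\rho}]$ could acquire a component in the weight-$\rho$ part of $\hfr$, and the entire argument hinges on the hypothesis $\rho\notin R_{\hfr}$ to kill that component. Everything else is formal weight bookkeeping together with the standard fact that the nonzero $\tfr$-weight spaces of a semi-simple Lie algebra are one-dimensional.
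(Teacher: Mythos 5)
Your proof is correct and takes essentially the same route as the paper's: both arguments come down to the observation that the $\tfr_{H}$-weight-$\rho$ space of $\gfr$ is $\hfr_{\rho} \oplus \mfr_{\rho}$, so the hypothesis $\rho \notin R_{\hfr}$ forces it to equal the one-dimensional highest weight line $\mfr_{\rho}$, which must then be a single $\tfr$-root space because $\tfr_{H} \le \tfr$. The only difference is presentational: the paper cites the general fact that $\tfr_{H}$-weight spaces are generated by $\tfr$-root spaces, while you verify the $\tfr$-stability of the line $\mfr_{\rho}$ by hand via the Jacobi identity before identifying it with a root space.
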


\begin{proof}
    Observe that since the $\rho$-weight space $\gfr_{\rho}$ of $\gfr$ (as a $\tfr_{H}$-representation) is $\mfr_{\rho} \oplus \hfr_{\rho}$, if $\rho$ is not a root of $\hfr$, then $\mfr_{\rho} = \gfr_{\rho}$.
    Since $\tfr_{H} \le \tfr$, a weight space of $\gfr$ as a $\tfr_{H}$-representation is generated by root spaces of $\gfr$.
    Since $\mfr_{\rho}$ is a highest weight space, it is of dimension 1, and hence it coincides with a root space.
\end{proof}

The following proposition considers the remaining cases of symmetric subalgebras in Theorems \ref{main thm: adjoint}--\ref{main thm: semi simple}, not covered in Propositions \ref{prop: Legendrian for IHSS}--\ref{prop:Om when g is not simple}:

\begin{proposition} \label{prop: highest weight orbits for symm iso irre pair}
    If $(\gfr,\,\hfr)$ is symmetric and $\gfr$ is simple, then $O_{\mfr}$ is a Legendrian subvariety of $Z_{\mfr}$.
    A list of $Z_{\mfr}$ for such $(\gfr,\,\hfr)$ is given in Table \ref{table: isotropy irreducible pairs of equal rank} (when $\rank(\hfr) = \rank(\gfr)$) and Table \ref{table: symmetric isotropy irreducible pairs of different rank with g simple} (when $\rank(\hfr) < \rank(\gfr)$).
\end{proposition}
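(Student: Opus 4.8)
The plan is to separate the two assertions: that $O_{\mfr}$ is Legendrian, and the explicit identification of $Z_{\mfr}$ in the tables. For the first, recall from Proposition \ref{prop:Zm exists} that $O_{\mfr}$ is already an integral subvariety of $Z_{\mfr}$, and an integral submanifold $X$ of a contact manifold always satisfies $\dim X \le \tfrac{1}{2}(\dim Z - 1)$ with equality precisely when $X$ is Legendrian. Hence it suffices to verify $\dim Z_{\mfr} = 2\dim O_{\mfr} + 1$. I would express both sides through stabilizers: with $v_{\rho}$ the highest weight vector, $\dim Z_{\mfr} = \dim\gfr - \dim\nfr_{\gfr}(v_{\rho})$ and $\dim O_{\mfr} = \dim\hfr - \dim\nfr_{\hfr}(v_{\rho})$, where $\nfr_{\gfr}(v_{\rho}) = \{w : [w,v_{\rho}] \in \CC v_{\rho}\}$ and $\nfr_{\hfr}(v_{\rho}) = \hfr \cap \nfr_{\gfr}(v_{\rho})$.

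The next step exploits the involution $\theta$. Since $\theta(v_{\rho}) = -v_{\rho}$, the subspace $\nfr_{\gfr}(v_{\rho})$ is $\theta$-stable and so splits as $\nfr_{\hfr}(v_{\rho}) \oplus (\nfr_{\gfr}(v_{\rho}) \cap \mfr)$. Because $[\mfr,\mfr] \subset \hfr$ while $\CC v_{\rho} \cap \hfr = 0$, the second summand is exactly the centralizer $\zfr_{\mfr}(v_{\rho}) = \{w \in \mfr : [w,v_{\rho}] = 0\}$. Moreover, as $\rho \ne 0$ there is a toral element $t_{\rho} \in \tfr_{H}$ with $[t_{\rho},v_{\rho}] = v_{\rho}$, so $\nfr_{\hfr}(v_{\rho}) = \zfr_{\hfr}(v_{\rho}) \oplus \CC t_{\rho}$. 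Substituting these into the two dimension formulas, a short computation collapses the equality $\dim Z_{\mfr} = 2\dim O_{\mfr} + 1$ to the single rank identity
\[
    \dim [v_{\rho},\, \mfr] = \dim [v_{\rho},\, \hfr],
\]
where $\textrm{ad}(v_{\rho})$ sends $\mfr$ into $\hfr$ and $\hfr$ into $\mfr$.

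To prove this identity I would use that $\textrm{ad}(v_{\rho})$ is skew-adjoint for the Killing form, i.e. $b_{\gfr}([v_{\rho},x],\,y) = -b_{\gfr}(x,\,[v_{\rho},y])$ for all $x,y$. Applied to $x \in \hfr$, $y \in \mfr$, this shows that the two maps $\textrm{ad}(v_{\rho})|_{\hfr}\colon \hfr \to \mfr$ and $\textrm{ad}(v_{\rho})|_{\mfr}\colon \mfr \to \hfr$ are, up to sign, transposes of one another with respect to the pairings $b_{\gfr}|_{\hfr}$ and $b_{\gfr}|_{\mfr}$. Since the restriction of the Killing form to each of $\hfr$ and $\mfr$ is non-degenerate, transposition preserves rank, which yields the identity at once. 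I would emphasize that this argument is uniform: it uses neither the simplicity of $\gfr$ nor the classification, and therefore it simultaneously reproves the Legendrian assertion in the non-simple case of Proposition \ref{prop:Om when g is not simple} as well as the symmetric instance of Proposition \ref{prop:Zm exists}.

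The genuinely laborious part is the second assertion, the identification of $Z_{\mfr}$ as a named nilpotent orbit in Tables \ref{table: isotropy irreducible pairs of equal rank} and \ref{table: symmetric isotropy irreducible pairs of different rank with g simple}. For this I would traverse the classification of symmetric pairs with $\gfr$ simple: for each pair I read off $\rho$ from Theorem \ref{thm: classification of isotropy irre pair}, locate the weight vector $v_{\rho}$, and match its $G_{\text{ad}}$-orbit to a Jordan type (classical $\gfr$) or a Bala--Carter label (exceptional $\gfr$). When $\rank\hfr = \rank\gfr$, Corollary \ref{coro: corollary of table, characterization of B3 G2} shows $\rho$ is not a root of $\hfr$, so by Proposition \ref{prop: rho is a root sp of g if it is not a root of h} the space $\mfr_{\rho}$ is a root space of $\gfr$; thus $v_{\rho}$ is a root vector and $Z_{\mfr}$ is forced to be $Z_{\text{long}}$ or $Z_{\text{short}}$ according to the root length. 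The different-rank entries are the delicate ones, since there $\rho$ may be an $\hfr$-root, the torus $\tfr$ strictly contains $\tfr_{H}$, and $v_{\rho}$ must be analyzed as a nilpotent element through its Jordan decomposition. I expect this orbit-matching, rather than the Legendrian property itself, to be the main obstacle, as it is where all of the case analysis actually lives.
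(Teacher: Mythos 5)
Your argument for the first assertion (the Legendrian property) is correct, and it takes a genuinely different route from the paper's. The paper proves Legendrianity by dimension comparison, case by case: it first identifies $Z_{\mfr}$ explicitly using the classification tables and only then checks $2\dim O_{\mfr}+1=\dim Z_{\mfr}$, so in the paper the Legendrian claim is logically downstream of the table identification. Your argument decouples the two. I verified the bookkeeping: with $\nfr_{\gfr}(v_{\rho})$ being $\theta$-stable, $\nfr_{\gfr}(v_{\rho})\cap\mfr=\zfr_{\mfr}(v_{\rho})$ (from $[\mfr,\mfr]\subset\hfr$ and $\hfr\cap\mfr=0$), and $\nfr_{\hfr}(v_{\rho})=\zfr_{\hfr}(v_{\rho})\oplus\CC t_{\rho}$, one gets
\[
    \dim Z_{\mfr}-2\dim O_{\mfr}-1 \;=\; \bigl(\dim\mfr-\dim\zfr_{\mfr}(v_{\rho})\bigr)-\bigl(\dim\hfr-\dim\zfr_{\hfr}(v_{\rho})\bigr) \;=\; \dim[v_{\rho},\mfr]-\dim[v_{\rho},\hfr],
\]
and the vanishing of the right-hand side follows, exactly as you say, from invariance of $b_{\gfr}$ plus non-degeneracy of $b_{\gfr}|_{\hfr}$ and $b_{\gfr}|_{\mfr}$, which make $\mathrm{ad}(v_{\rho})|_{\hfr}$ and $\mathrm{ad}(v_{\rho})|_{\mfr}$ transposes of each other up to sign. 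This buys real uniformity: no tables, no simplicity of $\gfr$, and the same computation covers Proposition \ref{prop:Om when g is not simple}; it is in effect an intrinsic proof of the fact the paper delegates to \cite[Proposition 4.34]{BrionKimPerrin2023MinimalRational} in Remark \ref{rmk: Legendrian when symm}.

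There is, however, a genuine gap in the second assertion, which is part of the statement: the identification of $Z_{\mfr}$ with the named orbits in Tables \ref{table: isotropy irreducible pairs of equal rank}--\ref{table: symmetric isotropy irreducible pairs of different rank with g simple}. Your plan does dispose of the equal-rank pairs and of the different-rank pairs where $\rho$ is not a root of $\hfr$ (there $\mfr_{\rho}$ is a root space of $\gfr$ by Proposition \ref{prop: rho is a root sp of g if it is not a root of h}, so $Z_{\mfr}$ is $Z_{\text{long}}$ or $Z_{\text{short}}$ according to root length; this matches the paper). But for the three folded families $(A_{2l-1},\,C_{l})$, $(D_{p+1},\,B_{p})$, $(E_{6},\,F_{4})$ --- precisely the cases where $\rho$ is an $\hfr$-root, necessarily the dominant short root by Corollary \ref{coro: corollary of table, characterization of B3 G2} --- you offer only the expectation that the analysis is laborious, not the analysis itself. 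In these cases $\mfr_{\rho}$ is not a root space: it is a line in the two-dimensional weight space $\gfr_{\gamma_{1}}\oplus\gfr_{\gamma_{2}}$, where $\gamma_{1},\gamma_{2}$ are the two roots of $\gfr$ restricting to $\rho$, so one must show $v_{\rho}=a_{1}E_{\gamma_{1}}+a_{2}E_{\gamma_{2}}$ with both $a_{i}\neq 0$ and then determine the Jordan type, respectively the Bala--Carter label, of this specific element. That is what the paper actually does --- explicit matrix computations yielding $Z_{[2^{2},\,1^{2l-4}]}$ and $Z_{[3,\,1^{2p-1}]}$, and a reflection argument yielding $Z_{2A_{1}}$ --- and it constitutes the bulk of its proof. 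Without carrying this out, the corresponding table entries remain unproven. Note that, thanks to your uniform argument, this gap affects only the table identification and not the Legendrian property; in the paper's proof the two are entangled, so there the same omission would sink both claims.
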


\begin{proof}
    We use the well-known classification of symmetric varieties, which can be found in \cite[\S8.10--8.11]{Wolf1984SpaceConstant}, and summarized in Tables \ref{table: isotropy irreducible pairs of equal rank}--\ref{table: symmetric isotropy irreducible pairs of different rank with g simple}.

    If $\rank(\hfr) = \rank(\gfr)$, then $(\gfr,\,\hfr)$ belongs to Table \ref{table: isotropy irreducible pairs of equal rank} (up to conjugacy).
    In this case, $\rho$ is given in the second column, and hence the marked Dynkin diagram of $O_{\mfr}$ (the third column) and its dimension (the fourth column) follow.
    Moreover, since $Z = Z_{\text{long}}$ ($Z_{\text{short}}$, respectively) if and only if $\rho$ is long (short, respectively), the last column of Table \ref{table: isotropy irreducible pairs of equal rank} follows.
    By comparing $\dim O_{\mfr}$ and $\dim Z_{\mfr}$, we conclude that $O_{\mfr}$ is always Legendrian.

    Next, assume that $\rank(\hfr) < \rank(\gfr)$ so that $(\gfr,\,\hfr)$ belongs to Table \ref{table: symmetric isotropy irreducible pairs of different rank with g simple}.
    Again, $\rho$ is given in the second column, and hence $\dim O_{\mfr}$ follows, as listed in the third column.
    If $(\gfr,\,\hfr)$ is not one of ($A_{2l-1}$, $C_{l}$) ($l \ge 2$), ($D_{p+1}$, $B_{p}$) ($p \ge 2$) and ($E_{6}$, $F_{4}$), then by Table \ref{table: symmetric isotropy irreducible pairs of different rank with g simple}, $\rho$ is not a root of $\hfr$ and $\gfr$ is of type $ADE$, and hence $O_{\mfr} \subset Z_{\text{long}}$ by Proposition \ref{prop: rho is a root sp of g if it is not a root of h}.
    Again by comparing the dimensions, we conclude that $O_{\mfr}$ is a Legendrian subvariety of $Z_{\text{long}}$.

    Now it remains to consider ($A_{2l-1}$, $C_{l}$) ($l \ge 2$), ($D_{p+1}$, $B_{p}$) ($p \ge 2$) and ($E_{6}$, $F_{4}$).
    To complete the proof, we use their constructions in terms of diagram folding, see \cite[Example 2 and Theorem 5.15, \S X.5]{Helgason1979DifferentialGeometry}.
    Consider a diagram automorphism of order 2 on the Dynkin diagram of $\gfr$, given by switching nodes as follows:
\begin{itemize}
    \item $(A_{2l-1},\, C_{l})$ ($l \ge 2$): \dynkin[edge length=.75cm, involutions={17;26;35}]{A}{**.***.**}
    
    \item $(D_{p+1},\, B_{p})$ ($p \ge 2$): \dynkin[edge length=.75cm, involutions={[in=120,out=60, relative]56}]{D}{**.****}
    
    \item $(E_{6},\, F_{4})$: \dynkin[edge length=.75cm, involutions={16;35}, upside down]E6
\end{itemize}
By identifying the nodes connected by arrows so that each identified node represents a short simple root, we obtain the Dynkin diagram of $\hfr$.
Furthermore, it induces an outer involution of $\gfr$ such that the fixed-point-locus is $\hfr$, and $\tfr$ is stable under the involution.

To be precise, let us denote simple roots of $\hfr$ and $\gfr$ by $\alpha_{i}$ and $\beta_{i}$ (labeled as in Section \ref{section: Tables}).
If the nodes corresponding to $\beta_{i}$ and $\beta_{j}$ are connected by an arrow and folded to a node corresponding to $\alpha_{k}$, then $\beta_{i} |_{\tfr_{H}} = \beta_{j}|_{\tfr_{H}} = \alpha_{k}$.
Here is a list of such triples:
\begin{itemize}
    \item $(A_{2l-1},\, C_{l})$ ($l \ge 2$): $\beta_{i}|_{\tfr_{H}} = \beta_{2l-i}|_{\tfr_{H}} = \alpha_{i}$, $1 \le i \le l-1$.
    
    \item $(D_{p+1},\, B_{p})$ ($p \ge 2$): $\beta_{p}|_{\tfr_{H}} = \beta_{p+1}|_{\tfr_{H}} = \alpha_{p}$.
    
    \item $(E_{6},\, F_{4})$: $\beta_{i} |_{\tfr_{H}} = \beta_{6-i}|_{\tfr_{H}} = \alpha_{i}$, $i=1,\,2$.
\end{itemize}

Next, consider the orthogonal decomposition $\gfr = \hfr \oplus \mfr$, which gives $\gfr_{\rho} = \hfr_{\rho} \oplus \mfr_{\rho}$, orthogonal decomposition of the $\rho$-weight space $\gfr_{\rho}$ (as a $\tfr_{H}$-representation).
In those exceptions, $\rho$ is always the dominant short root (Corollary \ref{coro: corollary of table, characterization of B3 G2}), and hence $\gfr_{\rho}$ is of dimension 2.
It means that $\gfr_{\rho}$ is generated by two root spaces, associated to two roots $\gamma_{1}$ and $\gamma_{2}$ of $\gfr$ such that $\gamma_{i}|_{\tfr_{H}} = \rho$.
Thus $v_{\rho} = a_{1} \cdot E_{\gamma_{1}} + a_{2} \cdot E_{\gamma_{2}}$ for some $a_{i} \in \CC$.
In fact, both $a_{1}$ and $a_{2}$ are nonzero, since $2 \dim O_{\mfr} + 1 > \dim Z_{\text{long}}$ (cf. Table~\ref{table: symmetric isotropy irreducible pairs of different rank with g simple}), and hence $Z_{\mfr} \not=Z_{\text{long}}$.
Now we consider case by case.
\begin{itemize}
    \item $(A_{2l-1},\, C_{l})$ ($l \ge 2$): In this case, $\gamma_{1} := \beta_{1} + \cdots + \beta_{2l-2}$ and $\gamma_{2} := \beta_{2} + \cdots + \beta_{2l-1}$.
        Let us identify $\gfr = \mathfrak{sl}(2l)$ with the algebra of traceless matrices.
        We may choose $\tfr$ as the subalgebra of the diagonal matrices, and then $\beta_{i} = \epsilon_{i} - \epsilon_{i+1}$ where $\epsilon_{i} : \tfr \rightarrow \CC$ is the linear functional that assigns the $i$th entry.
        The roots of $\gfr$ are given by $\epsilon_{i} - \epsilon_{j}$ ($1 \le i \not= j \le 2l$), and their root spaces are generated by $e_{ij}$, the elementary matrix with a unique nonzero entry at the $i$th row and the $j$th column.
        Thus $v_{\rho} = a_{1} \cdot E_{\gamma_{1}} + a_{2} \cdot E_{\gamma_{2}} = a_{1} e_{1,\,2l-1} + a_{2} e_{2,\,2l}$, and it is easy to show that it is conjugate to a Jordan matrix
        \[
            \begin{pmatrix}
                J_{2} & & & & \\
                & J_{2} & & & \\
                & & J_{1} & & \\
                & & & \ddots & \\
                & & & & J_{1}
            \end{pmatrix}.
        \]
        Thus $[\mfr_{\rho}] \in Z_{[2, \, 2,\, 1 ,\, \cdots,\, 1]} = Z_{[2^{2}, \, 1^{2l-4}]}$.
    
    \item $(D_{p+1},\, B_{p})$ ($p \ge 2$): In this case, $\gamma_{1} := \beta_{1} + \cdots + \beta_{p-1} + \beta_{p}$ and $\gamma_{2} := \beta_{1} + \cdots + \beta_{p-1} + \beta_{p+1}$.
        While we can proceed as in the previous case, instead, let us introduce more elementary argument.

        Let $(\gfr,\,\hfr) = (\mathfrak{so}(n+1), \, \mathfrak{so}(n))$, $n \ge 2$.
        We may consider $\gfr$ as the algebra of skew-symmetric $(n+1) \times (n+1)$ matrices, and $\hfr$ as the subalgebra of matrices whose $(n+1)$-th row and $(n+1)$-th column are zero.
        Since the Killing form of $\gfr$ is given as the trace form, the orthogonal complement $\mfr$ of $\hfr$ is consisting of matrices of form
        \[
            \begin{pmatrix}
                 & & & & x_{1} \\
                & & & & x_{2} \\
                & & & & \vdots  \\
                & & & & x_{n} \\
                -x_{1}& -x_{2} & \cdots & -x_{n} & 0
            \end{pmatrix}.
        \]
        Moreover, as an $\mathfrak{so}(n)$-representation, it is isomorphic to the standard one.
        Thus the highest weight orbit $O_{\mfr}$, which is the smooth quadric defined by $\sum_{i} x_{i}^{2} = 0$, contains an element
        \[
            \begin{pmatrix}
                 & & & & 1 \\
                & & & & \sqrt{-1} \\
                & & & & \vdots  \\
                & & & & 0 \\
                -1& -\sqrt{-1} & \cdots & 0 & 0
            \end{pmatrix}
        \]
        whose Jordan normal form is
        \[
            \begin{pmatrix}
                J_{3} & & & \\
                & J_{1} & & \\
                & & \ddots & \\
                & & & J_{1}
            \end{pmatrix}.
        \]
        Thus $O_{\mfr} \subset Z_{[3,\,1^{n-2}]}$ (which is equal to $Z_{\text{short}}$ when $n$ is even).
    
    \item $(E_{6},\, F_{4})$: In this case, $\gamma_{1} := \beta_{1} + \beta_{2} + 2 \beta_{3} + 2 \beta_{4} + \beta_{5} + \beta_{6}$ and $\gamma_{2} := \beta_{1} + 2\beta_{2} + 2 \beta_{3} + \beta_{4} + \beta_{5} + \beta_{6}$.
    For another root $\gamma_{0} := \beta_{1} + \beta_{2} + 2 \beta_{3} + \beta_{4} + \beta_{5} + \beta_{6}$ and the reflection $s_{\gamma_{0}}$ with respect to the hyperplane defined by $\gamma_{0}$, we have
    \[
        s_{\gamma_{0}}(\beta_{2}) = \beta_{2} + \gamma_{0} = \gamma_{2}, \quad s_{\gamma_{0}}(\beta_{4}) = \beta_{4} + \gamma_{0} = \gamma_{1}.
    \]
    This shows that $O_{\mfr}$ is contained in the nilpotent orbit containing $[E_{\beta_{4}} + E_{\beta_{2}}]$, i.e., $Z_{2A_{1}}$.
\end{itemize}
Again, by comparing $\dim O_{\mfr}$ and $\dim Z_{\mfr}$, we conclude that $O_{\mfr}$ is always a Legendrian subvariety of $Z_{\mfr}$.
\end{proof}

Finally, we consider the case where $(\gfr,\,\hfr)$ is not symmetric (corresponding to Theorems \ref{main thm: adjoint}(2) and \ref{main thm: semi simple}(2.g)).
Recall that if $(\gfr,\,\hfr)$ is not symmetric, then $\gfr$ is simple, $\hfr$ is semi-simple and $\rank(\hfr) < \rank(\gfr)$ (Theorem \ref{thm: classification of isotropy irre pair}).
In this case, $W$, the set of nonzero $\tfr_{H}$-weights of $\mfr$, is contained in the root lattice $\ZZ \cdot R_{\hfr}$ of $\hfr$.
Indeed, since $\tfr_{H} \oplus \mfr_{0}$ is the centralizer of $\tfr_{H}$ in $\gfr$, we have $\mfr_{0} \not= 0$.
By the irreducibility of $\mfr$, the $\hfr$-representation generated by $\mfr_{0}$ must be equal to $\mfr$, so $W \subset \ZZ \cdot R_{\hfr}$.
In particular, $W \subset \QQ \cdot R_{\hfr}$.

\begin{lemma} \label{lem: borel subalgebra of g}
For each $w \in \QQ \cdot R_{\hfr}$, let $s(w) \in \ZZ$ be the sum of the coefficients in its expression with respect to the simple roots of $\hfr$.
If $W \subset \QQ \cdot R_{\hfr}$, then we have the following:
    \begin{enumerate}
        \item $\tfr_{H} \oplus \mfr_{0} \oplus \bigoplus_{w \in W \, : \, s(w) = 0} \mfr_{w}$ is a reductive subalgebra of $\gfr$.
        \item The vector subspace spanned by $\bfr_{H}$, $\bigoplus_{w \in W \, : \, s(w) > 0} \mfr_{w}$ and a Borel subalgebra of $\tfr_{H} \oplus \mfr_{0} \oplus \bigoplus_{w \in W \, : \, s(w) = 0} \mfr_{w}$ containing $\tfr$ is a Borel subalgebra of $\gfr$.
    \end{enumerate}
\end{lemma}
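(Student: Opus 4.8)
The plan is to exhibit both subspaces through a single rational grading of $\gfr$ coming from one semisimple element of $\tfr_{H}$. Since $\hfr$ is semisimple, its simple roots form a basis of $\tfr_{H}^{*}$, so there is a unique $\zeta \in \tfr_{H}$ with $\alpha(\zeta) = 1$ for every simple root $\alpha$ of $\hfr$. The first step is to record that $s(w) = w(\zeta)$ for all $w \in \QQ \cdot R_{\hfr}$, which is immediate from the linearity of both sides and their agreement on simple roots. Consequently $\mathrm{ad}(\zeta)$ acts on $\hfr_{\alpha}$ by the scalar $s(\alpha) = \mathrm{ht}(\alpha)$, on $\mfr_{w}$ by $s(w)$, and trivially on $\tfr_{H}$ and on $\mfr_{0}$. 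As $\zeta$ lies in the toral subalgebra $\tfr$, the operator $\mathrm{ad}(\zeta)$ is semisimple with real (in fact rational) eigenvalues, producing an eigenspace decomposition $\gfr = \bigoplus_{c} \gfr_{(c)}$, where $\gfr_{(c)} := \{x \in \gfr : [\zeta,\, x] = c x\}$.

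The key elementary point is that a nonzero root $\alpha$ of $\hfr$ has $\mathrm{ht}(\alpha) \neq 0$, so no root space $\hfr_{\alpha}$ lies in $\gfr_{(0)}$; hence $\gfr_{(0)} = \tfr_{H} \oplus \mfr_{0} \oplus \bigoplus_{w \in W,\, s(w) = 0} \mfr_{w}$, which is exactly the subspace in (1). To finish (1) I would invoke the standard fact that the centralizer $\gfr_{(0)} = \zfr_{\gfr}(\zeta)$ of a semisimple element of a semisimple Lie algebra is reductive: the Killing form $b_{\gfr}$ pairs $\gfr_{(c)}$ nondegenerately with $\gfr_{(-c)}$, so its restriction to $\gfr_{(0)}$ is nondegenerate, which forces reductivity.

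For (2) I would turn the grading into a parabolic subalgebra. Because $[\gfr_{(c)},\, \gfr_{(c')}] \subseteq \gfr_{(c+c')}$, the non-negative part $\pfr := \bigoplus_{c \ge 0} \gfr_{(c)}$ is a subalgebra with Levi factor $\gfr_{(0)}$ and nilradical $\nfr := \bigoplus_{c > 0} \gfr_{(c)}$; since the eigenvalues are real, $\pfr$ is genuinely parabolic, the only subtlety being that the grading is rational rather than integral, which is harmless here. Reading off the positive part gives $\nfr = \nfr_{H}^{+} \oplus \bigoplus_{w \in W,\, s(w) > 0} \mfr_{w}$, where $\nfr_{H}^{+} = \bigoplus_{\alpha > 0} \hfr_{\alpha}$ is the nilradical of $\bfr_{H}$. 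Now for any Borel subalgebra $\bfr_{0}'$ of the Levi $\gfr_{(0)}$ containing $\tfr$, I claim $\bfr_{0}' \oplus \nfr$ is a Borel subalgebra of $\gfr$: it is solvable (a solvable $\bfr_{0}'$ extended by the nilpotent ideal $\nfr$) and its dimension equals $\tfrac{1}{2}(\dim \gfr + \rank \gfr)$, because $\dim \nfr = \dim(\bigoplus_{c<0}\gfr_{(c)})$ by the same Killing-form pairing and $\rank \gfr_{(0)} = \rank \gfr$, so that $\dim \bfr_{0}' + \dim \nfr = \tfrac{1}{2}(\dim \gfr_{(0)} + \rank \gfr) + \dim \nfr = \tfrac{1}{2}(\dim \gfr + \rank \gfr)$.

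It remains only to identify $\bfr_{0}' \oplus \nfr$ with the span named in the statement. Since $\tfr_{H} \subseteq \tfr \subseteq \bfr_{0}'$, the subalgebra $\bfr_{H} = \tfr_{H} \oplus \nfr_{H}^{+}$ is absorbed into $\bfr_{0}' \oplus \nfr$, so the span of $\bfr_{H}$, $\bigoplus_{w \in W,\, s(w) > 0} \mfr_{w}$ and $\bfr_{0}'$ is precisely $\bfr_{0}' \oplus \nfr$, as desired. The argument is essentially formal once $\zeta$ is produced; I expect the only content-bearing steps to be the identity $s(\cdot) = (\cdot)(\zeta)$ and the vanishing-height observation, and the main nuisance to be the bookkeeping that pins $\gfr_{(0)}$ down to exactly $\tfr_{H} \oplus \mfr_{0} \oplus \bigoplus_{s(w)=0}\mfr_{w}$ with no stray root spaces of $\hfr$.
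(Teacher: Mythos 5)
Your proof is correct, but it follows a genuinely different route from the paper's. Your key move is to linearize the height function by a grading element: taking $\zeta \in \tfr_{H}$ with $\alpha(\zeta)=1$ on simple roots gives $s(w)=w(\zeta)$ on $\QQ \cdot R_{\hfr} \supset W$, so part (1) becomes the statement that $\kfr_{0} := \tfr_{H} \oplus \mfr_{0} \oplus \bigoplus_{w \,:\, s(w)=0}\mfr_{w}$ is the centralizer $\zfr_{\gfr}(\zeta)$ of a semisimple element (no root of $\hfr$ has height zero), hence reductive, while part (2) becomes the standard fact that a Borel subalgebra of the Levi factor $\gfr_{(0)}$ plus the nilradical $\bigoplus_{c>0}\gfr_{(c)}$ of the associated parabolic is a Borel subalgebra of $\gfr$, which you verify by a dimension count. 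The paper instead proceeds by hand: for (1) it checks that $\kfr_{0}$ is an algebraic subalgebra via the derived-subalgebra trick and then quotes the nondegeneracy of $b_{\gfr}|_{\kfr_{0}}$ together with \cite[Theorem 2, \S 4.1.1]{OnishchikVinberg1990LieGroups}; for (2) it shows that $\bfr = \ufr_{H} \oplus \bfr_{\kfr_{0}} \oplus \bigoplus_{w\,:\,s(w)>0}\mfr_{w}$ is solvable and proves maximality directly---a strictly larger solvable subalgebra containing $\tfr$ must contain a root space $\gfr_{\beta}$ with $s(\beta|_{\tfr_{H}}) \le 0$, and both cases are excluded (the case $s=0$ contradicts $\bfr_{\kfr_{0}}$ being a Borel subalgebra of $\kfr_{0}$; the case $s<0$ puts a copy of $\mathfrak{sl}(2)$ inside a solvable algebra). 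Your approach is shorter and more conceptual: it explains why $s$ behaves additively (it is an eigenvalue of $\mathrm{ad}(\zeta)$), and it yields algebraicity of $\kfr_{0}$ for free, since centralizers of semisimple elements are Lie algebras of algebraic stabilizers---a point that matters because the paper's convention is that ``reductive'' includes ``algebraic''. What it costs is a heavier reliance on standard structure theory that the paper's self-contained root-space argument avoids: your dimension count tacitly uses that every solvable subalgebra of $\gfr$ lies in a Borel subalgebra and that all Borel subalgebras have dimension $\tfrac{1}{2}(\dim\gfr + \rank(\gfr))$; you should make that step explicit. Two small trims: uniqueness of $\zeta$ is never used (existence suffices, and needs only linear independence of the simple roots), and the semisimplicity of $\hfr$ you invoke, though not among the lemma's stated hypotheses, does hold here---either because $W \subset \QQ \cdot R_{\hfr}$ forces any central element of $\hfr$ to centralize all of $\gfr$, or by Theorem \ref{thm: classification of isotropy irre pair} in the non-symmetric case where Lemma \ref{lem: borel subalgebra of g} is actually applied.
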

\begin{proof}
    \begin{enumerate}
        \item It is clear that $\kfr_{0}:= \tfr_{H} \oplus \mfr_{0} \oplus \bigoplus_{w \in W \, : \, s(w) = 0} \mfr_{w}$ is a subalgebra of $\gfr$.
        For algebraicity, observe that the subspace $\bigoplus_{w \in W \, : \, s(w) = 0} \mfr_{w}$ is contained in the derived subalgebra $[\kfr_{0},\, \kfr_{0}]$.
        Thus $\kfr_{0}$ is generated by the algebraic subalgebras $[\kfr_{0},\, \kfr_{0}]$ and $\tfr_{H} \oplus \mfr_{0}$, and hence $\kfr_{0}$ is also algebraic.
        Furthermore, $\kfr_{0}$ is reductive since the restriction $b_{\gfr}|_{\kfr_{0}}$ is non-degenerate and by \cite[Theorem 2, \S 4.1.1]{OnishchikVinberg1990LieGroups}.
        \item Let $\bfr_{\kfr_{0}}$ be a Borel subalgebra of $\kfr_{0}$ containing $\tfr$, and put
        \[
            \bfr := (\bfr_{H} + \bfr_{\kfr_{0}}) \oplus \bigoplus_{w \in W \, : \, s(w) > 0} \mfr_{w} = \ufr_{H} \oplus \bfr_{\kfr_{0}} \oplus \bigoplus_{w \in W \, : \, s(w) > 0} \mfr_{w}
        \]
        where $\ufr_{H}$ is the unipotent radical of $\bfr_{H}$.
        One can easily show that $\bfr$ is a subalgebra of $\gfr$.
        Since $\ufr_{H} \oplus \bigoplus_{w \in W \, : \, s(w) > 0} \mfr_{w}$ is a solvable ideal of $\bfr$, we see that $\bfr$ is solvable.
        
        To see the maximality of $\bfr$, let $\tilde{\bfr}$ be a solvable subalgebra of $\gfr$ containing $\bfr$ properly.
        Since $\bfr$ contains $\tfr$, both $\bfr$ and $\tilde{\bfr}$ are spanned by $\tfr$ and root vectors of $\gfr$.
        Thus there is a root $\beta \in R_{\gfr}$ such that $\gfr_{\beta} \setminus \{0\} \subset \tilde{\bfr} \setminus \bfr$.
        By its definition, $s(\beta|_{\tfr_{H}}) \le 0$.
        \begin{itemize}
            \item If $s(\beta|_{\tfr_{H}}) = 0$, then $\gfr_{\beta} \subset \kfr_{0}$. Since $\bfr_{\kfr_{0}}$ is a Borel subalgebra of $\kfr_{0}$, we have $\gfr_{\beta} \subset \tilde{\bfr} \cap \kfr_{0} = \bfr_{\kfr_{0}} \le \bfr$, a contradiction.
            \item If $s(\beta|_{\tfr_{H}}) < 0$, then $\gfr_{-\beta} \subset \bfr$, and hence the $\mathfrak{sl}(2)$-subalgebra $\gfr_{\beta} \oplus [\gfr_{\beta}, \, \gfr_{-\beta}] \oplus \gfr_{-\beta}$ is contained in $\tilde{\bfr}$, a contradiction.
        \end{itemize}
        Therefore $\bfr$ is a Borel subalgebra.
    \end{enumerate}
\end{proof}

Before we proceed further, let us record another corollary of Table \ref{table: classification of g h}.

\begin{coro} \label{coro: corollary of table, information on s(w)}
    Assume that $(\gfr,\,\hfr)$ is not symmetric.
    In the notation of Lemma \ref{lem: borel subalgebra of g}, for the highest root $\delta^{\hfr_{1}}$ of a simple factor $\hfr_{1}$ of $\hfr$, we have
        \begin{itemize}
            \item $s(\rho) < s (\delta^{\hfr_{1}})$ if $(\gfr,\, \hfr, \hfr_{1})$ is one of $(B_{3},\, G_{2}, \, G_{2})$, $(E_{7}, \, A_{1} \oplus F_{4}, \, F_{4})$,
            \item $s(\rho) = s (\delta^{\hfr_{1}})$ if $(\gfr,\, \hfr, \hfr_{1})$ is one of $(D_{2n}, \, A_{1} \oplus C_{n}, \, C_{n})$ ($n\ge 3$), $(F_{4}, \, A_{1} \oplus G_{2}, \, G_{2})$, $(E_{6}, \, A_{2} \oplus G_{2}, \, G_{2})$, $(E_{8}, \, G_{2} \oplus F_{4}, \, F_{4})$, and
            \item $s(\rho) > s(\delta^{\hfr_{1}})$ otherwise.
        \end{itemize}
\end{coro}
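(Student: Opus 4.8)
The statement is purely a matter of reading off Table~\ref{table: classification of g h}, so the plan is to reduce the assertion to the comparison of two explicitly computable integers and then to run through the finitely many rows.

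My first step would be to note that $s$ is the linear functional on $\QQ\cdot R_{\hfr}$ determined by $s(\alpha)=1$ for each simple root $\alpha$ of $\hfr$. Hence $s(\delta^{\hfr_1})$ is simply the sum of the marks of the highest root of $\hfr_1$, that is $h(\hfr_1)-1$ where $h(\hfr_1)$ is the Coxeter number; in particular this equals $m$ for $A_m$, $2n-1$ for $C_n$, $2n-3$ for $D_n$, and $5,11,11,17,29$ for $G_2,F_4,E_6,E_7,E_8$. On the other side, since $\hfr$ is semi-simple (Theorem~\ref{thm: classification of isotropy irre pair}) and $\mfr$ is an irreducible $\hfr$-representation, $\mfr$ is an external tensor product of irreducible representations of the simple factors of $\hfr$; thus $\rho=\sum_j\rho_j$ is a sum of the highest weights $\rho_j$ of these factor representations and $s(\rho)=\sum_j s(\rho_j)$, each summand being computed inside its own factor via the inverse Cartan matrix. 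Because $W\subset\ZZ\cdot R_{\hfr}$ (noted before Lemma~\ref{lem: borel subalgebra of g}), every number occurring here is an integer.

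Next I would read $\rho$ off Table~\ref{table: classification of g h} pair by pair. The exceptional rows are precisely those in which the factor $\rho_1$ on $\hfr_1$ is a ``short'' fundamental weight. For instance, when $\hfr=G_2$ is simple, Corollary~\ref{coro: corollary of table, characterization of B3 G2} forces $\rho$ to be the dominant short root, so $s(\rho)=3<5=s(\delta^{G_2})$, which is the case $(B_3,G_2,G_2)$; for $(E_7,A_1\oplus F_4,F_4)$ the factor $\rho_1$ is the highest weight of the $26$-dimensional representation of $F_4$ with $s(\rho_1)=8$, and together with the $A_1$-contribution $s(2\varpi^{A_1})=1$ this gives $s(\rho)=9<11=s(\delta^{F_4})$; for $(D_{2n},A_1\oplus C_n,C_n)$ one finds $s(\rho)=s(\varpi_2^{C_n})+s(2\varpi^{A_1})=(2n-2)+1=2n-1=s(\delta^{C_n})$, and the three remaining equality cases $(F_4,A_1\oplus G_2,G_2)$, $(E_6,A_2\oplus G_2,G_2)$, $(E_8,G_2\oplus F_4,F_4)$ are handled identically using $s(\rho_1)=3$ for the $7$-dimensional representation of $G_2$. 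For every other triple --- including the small factors (such as the $A_1$ in $(E_7,A_1\oplus F_4)$, where $s(\rho)=9>1=s(\delta^{A_1})$) of the exceptional pairs --- one checks that $\rho$ restricts to a strictly larger weight on the relevant factor, so that $s(\rho)>s(\delta^{\hfr_1})$.

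The argument has no conceptual obstacle; the only genuine difficulty is bookkeeping. I would need to interpret the marked Dynkin diagrams of Table~\ref{table: classification of g h} correctly (taking care of the reversal and reflection conventions in the diagrams and of the distinction between long and short fundamental weights), pin down the correct external-tensor decomposition of $\mfr$ for each pair, and treat the $n$-indexed families uniformly rather than entry by entry. Once $s(\rho)$ and $s(\delta^{\hfr_1})$ are written as explicit integers in each instance, the three-way comparison is immediate.
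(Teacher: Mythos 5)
Your proposal is correct and is essentially the paper's own argument: the paper states this corollary without proof as a direct reading of Table \ref{table: classification of g h}, and your procedure --- compute $s(\delta^{\hfr_{1}})$ as the height $h(\hfr_{1})-1$ of the highest root and compute $s(\rho)$ factor by factor from the table's expression for $\rho$, then compare integers --- is exactly that reading, with all the numbers you quote checking out ($3<5$ for $(B_{3},G_{2})$; $1+8=9<11$ for $(E_{7},A_{1}\oplus F_{4},F_{4})$; $1+(2n-2)=2n-1$, $2+3=5$, $2+3=5$, $3+8=11$ for the four equality cases; strict inequality on every remaining factor). The only blemish is your side remark that the exceptional rows are ``precisely'' those where the component of $\rho$ on $\hfr_{1}$ is a short fundamental weight: row 36, $(E_{7},\,C_{3}\oplus G_{2})$ with $\rho=\delta_{\text{short}}^{C_{3}}+\delta_{\text{short}}^{G_{2}}$, has short fundamental weights on both factors yet $s(\rho)=4+3=7$ strictly exceeds both $s(\delta^{C_{3}})=5$ and $s(\delta^{G_{2}})=5$ --- harmless here, since your operative criterion is the explicit integer comparison rather than this heuristic.
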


The following proposition considers all the remaining cases, i.e., Theorems \ref{main thm: adjoint}(2) and \ref{main thm: semi simple}(2.g):

\begin{proposition} \label{prop: rho is a highest root sp}
    If $(\gfr,\, \hfr)$ is not symmetric, then $Z_{\mfr} = Z_{\text{long}}$ for $(\gfr,\,\hfr) \not= (B_{3},\, G_{2})$, and $Z_{\mfr} = Z_{[3,\,2^{2}]}$ for $(\gfr,\,\hfr) = (B_{3},\, G_{2})$.
    Moreover, $O_{\mfr}$ is a Legendrian subvariety of $Z_{\mfr}$ if and only if the last column in Table \ref{table: classification of g h} is marked as \lq Yes\rq.
\end{proposition}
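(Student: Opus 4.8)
The plan is to first reduce the Legendrian question to a purely numerical one and then to pin down which nilpotent orbit $Z_{\mfr}$ actually is. Since $O_{\mfr}$ is always an integral subvariety of $Z_{\mfr}$ (Proposition \ref{prop:Zm exists}), its tangent spaces are isotropic in the contact distribution, so $O_{\mfr}$ is Legendrian exactly when it is half-dimensional, i.e. when $\dim Z_{\mfr} = 2\dim O_{\mfr} + 1$. Both sides are computable from Table \ref{table: classification of g h}: $\dim O_{\mfr}$ is read off from the marked Dynkin diagram attached to $\rho$, and $\dim Z_{\mfr}$ is determined once the orbit is named. Thus the ``Moreover'' clause reduces to identifying $Z_{\mfr}$ and then comparing the two tabulated numbers, which is precisely what the last column records. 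The substantive work is therefore to prove $Z_{\mfr} = Z_{\text{long}}$ off $(B_{3},\,G_{2})$ and $Z_{\mfr} = Z_{[3,\,2^{2}]}$ for $(B_{3},\,G_{2})$.

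Assume first $(\gfr,\,\hfr) \neq (B_{3},\,G_{2})$. By Corollary \ref{coro: corollary of table, characterization of B3 G2}(1), $\rho$ is then not a root of $\hfr$, so Proposition \ref{prop: rho is a root sp of g if it is not a root of h} gives $\mfr_{\rho} = \gfr_{\gamma}$ for a single root $\gamma \in R_{\gfr}$; hence $v_{\rho} = E_{\gamma}$ is a root vector, and it suffices to show $\gamma$ is long, for then $[v_{\rho}]$ lies in the minimal orbit and $Z_{\mfr} = Z_{\text{long}}$. If $\gfr$ is simply-laced this is automatic. If $\gfr$ is not simply-laced I would grade $\gfr$ by the element $\xi \in \tfr_{H}$ fixed by $\alpha(\xi) = 1$ for every simple root $\alpha$ of $\hfr$, so that $[\xi,\,E_{\beta}] = s(\beta|_{\tfr_{H}})\,E_{\beta}$ and $s$ is literally evaluation at $\xi$. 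By Corollary \ref{coro: corollary of table, information on s(w)}, apart from $(B_{3},\,G_{2})$ and the simply-laced $(E_{7},\,A_{1}\oplus F_{4})$ one has $s(\rho) \geq s(\delta^{\hfr_{i}})$ for every simple factor $\hfr_{i}$; since every weight of $\mfr$ has $s$-value at most $s(\rho)$ and every root of $\hfr$ has $s$-value at most $\max_{i} s(\delta^{\hfr_{i}}) \leq s(\rho)$, the number $s(\rho)$ is the top $s$-degree of $\gfr$. One then checks that $v_{\rho}$ is annihilated by the entire nilradical of the Borel subalgebra $\bfr$ of Lemma \ref{lem: borel subalgebra of g}: it is killed by $\ufr_{H}$ because $\rho$ is the $\hfr$-highest weight of $\mfr$; it is killed by $\bigoplus_{s(w)>0}\mfr_{w}$ because $s(w+\rho) > s(\rho)$ forces $\gfr_{w+\rho} = 0$; and it is killed by the nilradical of the Borel of $\kfr_{0}$ because the $\tfr_{H}$-weight space $\gfr_{\rho}$ equals the one-dimensional $\mfr_{\rho} = \gfr_{\gamma}$, so $\gamma$ is the only root of $\gfr$ restricting to $\rho$. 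Hence $\gamma$ is the highest root $\delta^{\gfr}$, which is long, and $Z_{\mfr} = Z_{\text{long}}$.

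For the exceptional pair $(B_{3},\,G_{2})$ I would argue by direct computation. Realizing $\hfr = G_{2}$ inside $\gfr = \mathfrak{so}(7)$ through its $7$-dimensional fundamental representation $V_{7}$, one has $\mathfrak{so}(7) \cong \bigwedge^{2} V_{7} \cong G_{2} \oplus V_{7}$ as $G_{2}$-representations, so $\mfr \cong V_{7}$ and $\rho$ is the highest short root. Writing a highest weight vector $v_{\rho}$ of the $V_{7}$-summand explicitly as a skew-symmetric endomorphism of $V_{7}$ and computing its Jordan type shows $v_{\rho} \in Z_{[3,\,2^{2}]}$, whence $Z_{\mfr} = Z_{[3,\,2^{2}]}$; consistently with Corollary \ref{coro: corollary of table, information on s(w)}, here $s(\rho) < s(\delta^{G_{2}})$, reflecting that $\gamma$ is short rather than the long highest root.

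With $Z_{\mfr}$ identified in every case, the Legendrian statement follows by substituting the known value of $\dim Z_{\mfr}$ and the tabulated $\dim O_{\mfr}$ into $\dim Z_{\mfr} = 2\dim O_{\mfr} + 1$ case by case, which reproduces exactly the ``Yes''/``No'' entries of the last column. The step I expect to be the main obstacle is the uniform treatment of the non-simply-laced pairs: one must verify that the boundary case $s(\rho) = s(\delta^{\hfr_{i}})$ (as for $(F_{4},\,A_{1}\oplus G_{2})$) still places $v_{\rho}$ in the top $s$-degree and that no second root of $\gfr$ restricts to $\rho$, so that $\gamma$ is genuinely the highest root; and, separately, the explicit Jordan-type computation for $(B_{3},\,G_{2})$ must be carried out with care to land on $[3,\,2^{2}]$ rather than a neighbouring orbit.
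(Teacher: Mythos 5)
Your overall strategy is the same as the paper's: reduce the Legendrian claim to the dimension count $\dim Z_{\mfr}=2\dim O_{\mfr}+1$, use Corollary \ref{coro: corollary of table, characterization of B3 G2} and Proposition \ref{prop: rho is a root sp of g if it is not a root of h} to realize $\mfr_{\rho}$ as a $\tfr$-root space, dispose of simply-laced $\gfr$ at once, run a graded/Borel-stability argument via Lemma \ref{lem: borel subalgebra of g} and Corollary \ref{coro: corollary of table, information on s(w)} for non-simply-laced $\gfr$, and compute $(B_{3},G_{2})$ by hand. However, your third bullet --- that the nilradical of the Borel of $\kfr_{0}$ kills $v_{\rho}$ ``because $\gamma$ is the only root of $\gfr$ restricting to $\rho$'' --- has a genuine gap exactly at the boundary pair $(F_{4},A_{1}\oplus G_{2})$, and your own diagnosis of the obstacle (whether ``a second root of $\gfr$ restricts to $\rho$'') misidentifies the problem. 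When $s(\rho)=s(\delta^{G_{2}})$, the top $s$-degree component of $\gfr$ is two-dimensional, namely $\mfr_{\rho}\oplus\hfr_{\delta}$ with $\delta:=\delta^{G_{2}}$. The weight $\delta-\rho$ satisfies $s(\delta-\rho)=0$ and is a weight of $\mfr$ (its $A_{1}$-component is the lowest weight of the $5$-dimensional factor, its $G_{2}$-component is the short root), so $\mfr_{\delta-\rho}$ is a root space of $\kfr_{0}$; moreover $[\mfr_{\delta-\rho},v_{\rho}]$ is a \emph{nonzero} element of $\hfr_{\delta}$ (this is precisely the non-orthogonality computation in Lemma \ref{lem: dimension of sum of tangent sp of Z and Pm}). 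Hence if the chosen Borel $\bfr_{\kfr_{0}}$ contains $\mfr_{\delta-\rho}$ in its nilradical, that nilradical does \emph{not} annihilate $v_{\rho}$: brackets with degree-zero elements can escape into $\hfr_{\delta}$, whose $\tfr_{H}$-weight is $\delta\neq\rho$, so the one-dimensionality of $\gfr_{\rho}$ is irrelevant to this bracket.

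The paper closes exactly this gap with a case analysis your write-up is missing: either $[\bfr_{\kfr_{0}},\mfr_{\rho}]\subset\mfr_{\rho}$ and one proceeds as you do, or else $\bfr_{\kfr_{0}}$ contains $\mfr_{\delta-\rho}$, in which case one replaces $\bfr_{\kfr_{0}}$ by the opposite Borel $\bfr_{\kfr_{0}}^{-}$ (which does not contain $\mfr_{\delta-\rho}$), invokes Lemma \ref{lem: borel subalgebra of g} to see that $(\bfr_{H}+\bfr_{\kfr_{0}}^{-})\oplus\bigoplus_{s(w)>0}\mfr_{w}$ is again a Borel subalgebra of $\gfr$, and checks that its nilradical kills $v_{\rho}$ (e.g.\ the bracket with $\mfr_{\rho-\delta}$ has $\tfr_{H}$-weight $2\rho-\delta$, which is not a weight of the top-degree component, hence vanishes). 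Note that the other equality cases of Corollary \ref{coro: corollary of table, information on s(w)} --- $(D_{2n},A_{1}\oplus C_{n})$, $(E_{6},A_{2}\oplus G_{2})$, $(E_{8},G_{2}\oplus F_{4})$ --- escape this difficulty only because there $\gfr$ is simply laced and is already settled by Proposition \ref{prop: rho is a root sp of g if it is not a root of h}. Finally, in your $(B_{3},G_{2})$ computation you should also verify that $v_{\rho}$ is a combination of the two root vectors $E_{\beta_{1}+\beta_{2}+\beta_{3}}$ and $E_{\beta_{2}+2\beta_{3}}$ with \emph{both} coefficients nonzero (the paper deduces this from $\dim O_{\mfr}=5$, which excludes $Z_{\text{long}}$ and $Z_{\text{short}}$); otherwise the Jordan type could degenerate to that of a single root vector.
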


\begin{proof}
    As noted before, $\dim O_{\mfr}$ follows from the highest weight $\rho$ of $\mfr$ given in Table \ref{table: classification of g h}.
    Thus the second statement follows from the first statement, and so it is enough to find $Z_{\mfr}$ in each case.

    First, assume that $(\gfr,\, \hfr) \not= (B_{3}, \, G_{2})$.
    By Proposition \ref{prop: rho is a root sp of g if it is not a root of h} and Corollary \ref{coro: corollary of table, characterization of B3 G2}, $\mfr_{\rho}$ is a root space of $\gfr$ with respect to $\tfr$.
    In particular, if the Dynkin diagram of $\gfr$ is simply laced, then $\mfr_{\rho}$ is a long root space (with respect to $\tfr$).
    Thus $Z_{\mfr} = Z_{\text{long}}$.

    For the remaining cases ($\not= (B_{3}, \, G_{2})$), let $\bfr$ be a Borel subalgebra of $\gfr$ constructed in Lemma \ref{lem: borel subalgebra of g}.
    Note that for every $w \in W \cup \{0\}$ different from $\rho$, we have $s(w) < s(\rho)$.
    Moreover, by Corollary \ref{coro: corollary of table, information on s(w)}, if
    \begin{itemize}
        \item the Dynkin diagram of $\gfr$ is not simply laced, and
        \item $(\gfr, \, \hfr) \not= (F_{4}, \, A_{1} \oplus G_{2})$,
    \end{itemize}
    then $s(\rho) > s(\alpha)$ for all $\alpha \in R_{\hfr}$, and hence $\mfr_{\rho}$ is $\bfr$-stable.
    That is, $\mfr_{\rho}$ is the highest root space of $\gfr$ with respect to $\bfr$.
    Therefore $\mfr_{\rho}$ is a long root space and thus $Z_{\mfr} = Z_{\text{long}}$, if $(\gfr,\,\hfr) \not= (F_{4},\, A_{1} \oplus G_{2})$.

    Now consider the case $(\gfr,\, \hfr) = (F_{4}, \, A_{1} \oplus G_{2})$.
    In this case, $s(\rho) = s(\delta) (= 5) > s(w)$ for the highest root $\delta$ of the $G_{2}$ factor and any $w \in (R_{\hfr} \setminus \{\delta\}) \cup (W \setminus\{\rho\})$.
    Moreover, since $\mfr$ is isomorphic to the tensor product of an irreducible $A_{1}$-representation and the first fundamental $G_{2}$-representation (both of which have 1-dimensional weight spaces), each weight space $\mfr_{w}$ is of dimension 1.
    Thus if $w \in W \setminus R_{\hfr}$, then $\mfr_{w}$ is a root space of $\gfr$.
    Now for $\kfr_{0} := \tfr_{H} \oplus \mfr_{0} \oplus \bigoplus_{w \in W \, : \, s(w) = 0} \mfr_{w}$ (as in Lemma \ref{lem: borel subalgebra of g}),
    \[
        \left[ \kfr_{0} , \, \hfr_{\delta} \oplus \mfr_{\rho} \right] = \hfr_{\delta} \oplus \mfr_{\rho}.
    \]
    Let $\bfr_{\kfr_{0}}$ be a Borel subalgebra of $\kfr_{0}$ containing $\tfr$.
    If $[\bfr_{\kfr_{0}}, \, \mfr_{\rho}] \subset \mfr_{\rho}$, then $\mfr_{\rho}$ is stable under the Borel subalgebra
    \[
        (\bfr_{H} + \bfr_{\kfr_{0}}) \oplus \bigoplus_{w \in W \, : \, s(w) > 0} \mfr_{w}
    \]
    of $\gfr$ (Lemma \ref{lem: borel subalgebra of g}). 
    If $[\bfr_{\kfr_{0}}, \, \mfr_{\rho}] \not\subset \mfr_{\rho}$, then $\bfr_{\kfr_{0}}$ contains the $(\delta - \rho)$-weight space $\mfr_{\delta - \rho}$, which is a root space of $\gfr$ since $(\delta-\rho) \not\in R_{\hfr}$.
    Since the opposite Borel subalgebra $\bfr_{\kfr_{0}}^{-} \le \kfr_{0}$ does not contain $\mfr_{\delta - \rho}$, $\mfr_{\rho}$ is stable under the Borel subalgebra
    \[
        (\bfr_{H} + \bfr_{\kfr_{0}}^{-}) \oplus \bigoplus_{w \in W \, : \, s(w) > 0} \mfr_{w}
    \]
    of $\gfr$ (Lemma \ref{lem: borel subalgebra of g}).
    Therefore in any cases, $\mfr_{\rho}$ is a long root space, and thus $Z_{\mfr} = Z_{\text{long}}$.

    Finally, assume that $(\gfr,\,\hfr)=(B_{3},\, G_{2})$.
The embedding $\hfr \hookrightarrow \gfr$ can be constructed as follows:
Put $\tilde{\gfr} := D_{4}$ and denote its simple roots by $\tilde{\beta}_{1}$, ..., $\tilde{\beta}_{4}$.
If $\sigma_{2}$ and $\sigma_{3}$ are the automorphisms of $\tilde{\gfr}$ induced by diagram automorphisms
\begin{center}
    \begin{dynkinDiagram}[edge length=.75cm, labels={1,,3,4}, label directions={left,,above,above}, involutions={[in=120,out=60, relative]34}]D{****}
    \end{dynkinDiagram}
    and
    \begin{dynkinDiagram}[edge length=.75cm, labels={1,,3,4},label directions={left,,above,above}]D{****}
        \draw[-latex, in=-120,out=-60, relative] (root 3) to (root 1);
        \draw[-latex, in=-120,out=-60, relative] (root 1) to (root 4);
        \draw[-latex,in=-120,out=-60, relative] (root 4) to (root 3);
    \end{dynkinDiagram}
    respectively,
\end{center}
then $\gfr$ and $\hfr$ are fixed-point-loci of $\sigma_{2}$ and $\sigma_{3}$ in $\tilde{\gfr}$, respectively.
Thus we can choose $\tfr$ and $\tfr_{H}$ so that for the simple roots indexed as in the diagrams
    \begin{center}
        ${\dynkin[labels={\alpha_{1},\alpha_{2}}] G2} \quad \text{and} \quad {\dynkin[labels={\beta_{1},\beta_{2},\beta_{3}}] B3}$,
    \end{center}
we have
\[
    \tilde{\beta}_{1}|_{\tfr} = \beta_{1}, \quad \tilde{\beta}_{2}|_{\tfr} = \beta_{2}, \quad \tilde{\beta}_{3}|_{\tfr} = \tilde{\beta}_{4}|_{\tfr} = \beta_{3},
\]
and
\[
    \tilde{\beta}_{2}|_{\tfr_{H}} = \alpha_{2}, \quad \tilde{\beta}_{1}|_{\tfr_{H}}= \tilde{\beta}_{3}|_{\tfr_{H}} = \tilde{\beta}_{4}|_{\tfr_{H}} = \alpha_{1}.
\]
Since $\rho = 2\alpha_{1}+\alpha_{2}$, there are exactly two roots of $\gfr$ whose restrictions on $\tfr_{H}$ are equal to $\rho$, namely $\beta_{1} + \beta_{2} + \beta_{3}$ and $\beta_{2} + 2 \beta_{3}$.
It means that $\mfr_{\rho}$ is generated by $a_{1} E_{\beta_{1} + \beta_{2} + \beta_{3}} + a_{2} E_{\beta_{2} + 2 \beta_{3}}$ for some $a_{i} \in \CC$.
In fact, by \cite[Remark 5.4.2, Theorem 5.1.2 and Corollary 6.1.4]{CollingwoodMcGovern1993NilpotentOrbits}, there are 6 nilpotent orbits in $\PP(\gfr)$
\[
    Z_{[7]}, \quad Z_{[5,\,1^{2}]}, \quad Z_{[3^{2}, \, 1]},\quad Z_{[3,\, 2^{2}]}, \quad Z_{[3,\, 1^{4}]} (=Z_{\text{short}}),\quad Z_{[2^{2},\, 1^{3}]} (=Z_{\text{long}})
\]
of dimension
\[
    17, \quad 15,\quad 13, \quad 11, \quad 9, \quad 7,
\]
respectively.
Since $O_{\mfr} (\simeq \QQ^{5})$ is of dimension 5, $Z_{\mfr} \not= Z_{\text{short}},\, Z_{\text{long}}$, and hence $a_{1},\, a_{2}\not=0$.
Moreover, since the maximal torus of $G$ acts on $\PP(E_{\beta_{1} + \beta_{2} + \beta_{3}},\, E_{\beta_{2} + 2\beta_{3}}) \setminus \{[E_{\beta_{1} + \beta_{2} + \beta_{3}}],\, [E_{\beta_{2} + 2\beta_{3}}]\}$ transitively, we see that
\[
    \PP(E_{\beta_{1} + \beta_{2} + \beta_{3}},\, E_{\beta_{2} + 2\beta_{3}}) \setminus \{[E_{\beta_{1} + \beta_{2} + \beta_{3}}],\, [E_{\beta_{2} + 2\beta_{3}}]\} \subset Z_{\mfr}.
\]

Now recall that $\gfr= \mathfrak{so}(7)$ can be identified with the algebra of skew-symmetric $7 \times 7$ matrices.
Following \cite[\S III.8]{Helgason1979DifferentialGeometry}, we consider $\tfr$ generated by $h_{i} := e_{2i-1,\, 2i} - e_{2i,\, 2i-1}$ ($1 \le i \le 3$) where $e_{ij}$ denotes the elementary matrix with 1 at the intersection of the $i$th row and the $j$th column.
The roots are given by $\pm \epsilon_{i}$ ($1\le i \le 3$), $\epsilon_{i} - \epsilon_{j}$ ($1 \le i\not= j \le 3$) and $\pm (\epsilon_{i} + \epsilon_{j})$ ($1 \le i < j \le 3$) where $\epsilon_{i}$ is defined by setting $\epsilon_{i}(h_{j}) = -\sqrt{-1}\delta_{ij}$.
For $\beta_{1} := \epsilon_{1} - \epsilon_{2}$, $\beta_{2} := \epsilon_{2} - \epsilon_{3}$ and $\beta_{3} := \epsilon_{3}$, the root spaces associated to $\beta_{1} + \beta_{2} + \beta_{3} (=\epsilon_{1})$ and $\beta_{2} + 2 \beta_{3} (=\epsilon_{2} + \epsilon_{3})$ are generated by matrices
\[
    \begin{pmatrix}
        &&&&&& 1 \\
        &&&&&& -\sqrt{-1} \\
        &&&&&& \\
        &&&&&& \\
        &&&&&& \\
        &&&&&& \\
       -1 & \sqrt{-1} &&&&& \\
    \end{pmatrix} \quad \text{and} \quad \begin{pmatrix}
        &&&&&& \\
        &&&&&& \\
        &&&&1&-\sqrt{-1}& \\
        &&&&-\sqrt{-1}&-1& \\
        &&-1&\sqrt{-1}&&& \\
        &&\sqrt{-1}&1&&& \\
        &&&&&& \\
    \end{pmatrix},
\]
respectively.
Therefore $Z_{\mfr}$ contains a point represented by a matrix
\[
    \begin{pmatrix}
        &&&&&& 1 \\
        &&&&&& -\sqrt{-1} \\
        &&&&1&-\sqrt{-1}& \\
        &&&&-\sqrt{-1}&-1& \\
        &&-1&\sqrt{-1}&&& \\
        &&\sqrt{-1}&1&&& \\
        -1& \sqrt{-1} &&&&& \\
    \end{pmatrix}.
\]
Its row rank is 4, its square is of row rank 1, and its 3rd power is zero, and so $Z_{\mfr} = Z_{[3,\,2^{2}]}$.
\end{proof}

\begin{exa} \label{example: legendrian Om}
Here are some examples of non-symmetric $(\gfr,\,\hfr)$ with $O_{\mfr} \subset Z_{\text{long}}$ Legendrian.
\begin{enumerate}
    \item For non-symmetric $(\gfr,\,\hfr)$ with $\gfr = \mathfrak{sp}(2r)$, $r \ge 1$, $O_{\mfr}$ is always a Legendrian subvariety of $Z_{\text{long}} \simeq \PP^{2r-1}$.
    As noted in the introduction, it is well known that every equivariant Legendrian embeddings of rational homogeneous spaces into $\PP^{2r-1}$ can be obtained as a subadjoint variety (see \cite{Buczynski2008AlgebraicLegendrian, LandsbergManivel2007LegendrianVarieties}).
    In fact, $O_{\mfr}$'s for No. 6, 7, 8, 9, 10, and $11_{n}$ in Table \ref{table: classification of g h} are subadjoint varieties defined by $G_{2}$, $F_{4}$, $E_{6}$, $E_{7}$, $E_{8}$ and $\mathfrak{so}(n+4)$, respectively.
    This shows that every non-linear subadjoint variety arises from non-symmetric $(\gfr,\,\hfr)$ with $\gfr$ of type $C$.
    (Remark that the linear subspace $\PP^{n} \subset \PP^{2n+1}$, the subadjoint variety defined by $A_{n+2}$, can be obtained by Proposition \ref{prop: Legendrian for IHSS}, by taking $S_{\text{ad}}/P$ as $\text{LG}(n+1,\, \CC^{2n+2})$.)

    \item There are three infinite non-symmetric families of $(\gfr,\,\hfr)$ with $O_{\mfr} \subset Z_{\text{long}}$ Legendrian: $(A_{2p-1}, \, A_{p-1} \oplus A_{1})$ ($p \ge 3$), $(C_{n}, \, A_{1} \oplus \mathfrak{so}(n))$ ($n \ge 3$), and $(D_{2n}, \, A_{1} \oplus C_{n})$ ($n \ge 3$).
    As in Example \ref{example: non-symm isotropy irre embedding}, these are arising from the isotropy representations of $\text{Gr}(2,\, \CC^{p+2})$, the adjoint variety of $\mathfrak{so}(n+4)$, and $\text{IG}(2,\, \CC^{2n+4})$, respectively.
    On the other hand, these three families are all possible non-symmetric $(\gfr,\, \hfr)$ with $\gfr$ classical and $\hfr$ not simple as proved in \cite{Wolf1984CorrectionGeometry}.
\end{enumerate}
\end{exa}

\subsection{Homogeneous Legendrian subvarieties of adjoint varieties as linear sections}

While $O_{\mfr} \subset Z_{\mfr}$ is Legendrian for all symmetric $(\gfr,\,\hfr)$ (see Remark \ref{rmk: Legendrian when symm}), as indicated in Table \ref{table: classification of g h}, $O_{\mfr} \subset Z_{\mfr}$ is not always Legendrian if $(\gfr,\,\hfr)$ is not symmetric.
In this section, we give a geometric characterization of the pairs $(\gfr,\,\hfr)$ such that $O_{\mfr}$ is a Legendrian subvariety of the adjoint variety.
The results in this section are not used in the proof of our main Theorems \ref{main thm: adjoint}--\ref{main thm: semi simple}.

First, as we have seen in Propositions \ref{prop: highest weight orbits for symm iso irre pair} and \ref{prop: rho is a highest root sp}, when $\gfr$ is simple, $Z_{\mfr}$ is the adjoint variety $Z_{\text{long}}$ for $\gfr$ with only few exceptions.
More precisely:
\begin{theorem} \label{thm: classification of Om and Zm of iso irre var}
    If $\gfr$ is simple, then $Z_{\mfr}$ is the adjoint variety $Z_{\text{long}} \subset \PP(\gfr)$ unless $(\gfr ,\,\hfr)$ is one of ($A_{2l-1}$, $C_{l}$) ($l \ge 2$), ($C_{l}$, $C_{p} \oplus C_{l-p}$) ($1 \le p \le l-1$), ($\mathfrak{so}(l)$, $\mathfrak{so}(l-1)$) ($l \ge 5$), ($F_{4}$, $B_{4}$), ($E_{6}$, $F_{4}$), and ($B_{3}$, $G_{2}$).
\end{theorem}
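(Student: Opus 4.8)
The plan is to assemble the statement from the two propositions that have already computed $Z_{\mfr}$ case by case, Proposition \ref{prop: rho is a highest root sp} for non-symmetric pairs and Proposition \ref{prop: highest weight orbits for symm iso irre pair} for symmetric pairs, and then to verify that the exceptions produced there are exactly the six families listed. Throughout I use that $Z_{\mfr} = G_{\text{ad}} \cdot [v_{\rho}]$, so determining $Z_{\mfr}$ amounts to identifying the nilpotent orbit of the highest weight vector $v_{\rho}$. The argument is therefore bookkeeping on top of the preceding computations, together with a single genuine verification in the equal-rank symmetric case.

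First I would dispose of the non-symmetric case. By Theorem \ref{thm: classification of isotropy irre pair}(2) such a pair has $\gfr$ simple, and Proposition \ref{prop: rho is a highest root sp} gives $Z_{\mfr} = Z_{\text{long}}$ for every non-symmetric pair except $(B_{3},\,G_{2})$, where $Z_{\mfr} = Z_{[3,\,2^{2}]}$; since the adjoint orbit of $B_{3} = \mathfrak{so}(7)$ is $Z_{[2^{2},\,1^{3}]}$, this is a genuine exception and the only one from the non-symmetric case. Next I turn to symmetric pairs with $\gfr$ simple, splitting on rank. When $\rank(\hfr) < \rank(\gfr)$, Proposition \ref{prop: highest weight orbits for symm iso irre pair} shows $O_{\mfr} \subset Z_{\text{long}}$ for all such pairs other than the three it singles out: $(A_{2l-1},\,C_{l})$ gives $Z_{[2^{2},\,1^{2l-4}]}$, $(D_{p+1},\,B_{p}) = (\mathfrak{so}(2p+2),\,\mathfrak{so}(2p+1))$ gives $Z_{[3,\,1^{2p-1}]}$, and $(E_{6},\,F_{4})$ gives $Z_{2A_{1}}$, each distinct from $Z_{\text{long}}$ by a dimension count.

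The remaining work, and the only step that is a verification rather than a citation, is the equal-rank symmetric case. Here $\tfr = \tfr_{H}$, so $\mfr$ is spanned by root spaces of $\gfr$ and $v_{\rho}$ is a root vector; by the equal-rank part of Proposition \ref{prop: highest weight orbits for symm iso irre pair} this means $Z_{\mfr} = Z_{\text{long}}$ exactly when $\rho$ is long and $Z_{\mfr} = Z_{\text{short}}$ exactly when $\rho$ is short. Reading the $\rho$ column of Table \ref{table: isotropy irreducible pairs of equal rank}, the short-root cases are $(C_{l},\,C_{p} \oplus C_{l-p})$, $(F_{4},\,B_{4})$, and $(B_{p},\,D_{p}) = (\mathfrak{so}(2p+1),\,\mathfrak{so}(2p))$, the last because $\mfr$ is the standard $D_{p}$-representation whose highest weight $\epsilon_{1}$ is a short root of $B_{p}$; for every other equal-rank symmetric simple pair (in particular $(B_{l},\,B_{q} \oplus D_{p})$ with $q \ge 1$, $(F_{4},\,A_{1} \oplus C_{3})$, and $(G_{2},\,A_{1} \oplus A_{1})$) the highest weight $\rho$ is long and $Z_{\mfr} = Z_{\text{long}}$. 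Finally I would merge the two pieces of the sphere family, the rank-drop $(\mathfrak{so}(2p+2),\,\mathfrak{so}(2p+1))$ and the equal-rank $(\mathfrak{so}(2p+1),\,\mathfrak{so}(2p))$, into the single family $(\mathfrak{so}(l),\,\mathfrak{so}(l-1))$ with $l \ge 5$. Collecting all exceptions yields exactly $(A_{2l-1},\,C_{l})$, $(C_{l},\,C_{p} \oplus C_{l-p})$, $(\mathfrak{so}(l),\,\mathfrak{so}(l-1))$, $(F_{4},\,B_{4})$, $(E_{6},\,F_{4})$, and $(B_{3},\,G_{2})$, as claimed. I expect the main obstacle to be the care in this equal-rank analysis, namely reading every $\rho$ off Table \ref{table: isotropy irreducible pairs of equal rank} and confirming that no further equal-rank pair contributes a short root, rather than any substantial new argument.
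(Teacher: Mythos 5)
Your proposal is correct and follows essentially the same route as the paper: the paper states Theorem \ref{thm: classification of Om and Zm of iso irre var} precisely as a summary of Propositions \ref{prop: highest weight orbits for symm iso irre pair} and \ref{prop: rho is a highest root sp}, i.e., by collecting the exceptional cases from the symmetric (equal-rank via long/short $\rho$ in Table \ref{table: isotropy irreducible pairs of equal rank}, rank-drop via Table \ref{table: symmetric isotropy irreducible pairs of different rank with g simple}) and non-symmetric classifications. Your bookkeeping, including merging $(\mathfrak{so}(2p+1),\,\mathfrak{so}(2p))$ and $(\mathfrak{so}(2p+2),\,\mathfrak{so}(2p+1))$ into the single family $(\mathfrak{so}(l),\,\mathfrak{so}(l-1))$, $l \ge 5$, matches the paper's intended argument.
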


\begin{rmk}
    It would be interesting to observe that the exceptions in Theorem \ref{thm: classification of Om and Zm of iso irre var} appear in the classification of \emph{shared orbit pairs} due to Brylinski and Kostant \cite{BrylinskiKostant1994NilpotentOrbits} (see also \cite[Example 2.7.a]{FuJuteauLevySommers2023LocalGeometry}).
    Here, a \emph{shared orbit pair} means a pair $(\Ocal_{1},\, \Ocal_{2})$ of nilpotent orbits $\Ocal_{i} \subset \gfr_{i}$ ($i=1,\,2$) for reductive Lie algebras $\gfr_{1} < \gfr_{2}$ such that there is a $G_{1}$-equivariant finite morphism $\overline{\Ocal_{2}} \rightarrow \overline{\Ocal_{1}}$.
    In fact, by combining the classifications, the exceptions in Theorem \ref{thm: classification of Om and Zm of iso irre var} can be characterized as isotropy irreducible pairs $(\gfr,\,\hfr)$ with $\gfr$ simple and having a shared orbit pair.
    A geometric explanation for this coincidence is not known to the author.
    Nonetheless, we shall use other shared orbit pairs related to ($C_{l}$, $C_{p} \oplus C_{l-p}$) ($1 \le p \le l-1$), ($\mathfrak{so}(l)$, $\mathfrak{so}(l-1)$) ($l \ge 5$), ($F_{4}$, $B_{4}$) and ($B_{3}$, $G_{2}$) to construct the universal covers of $Z_{\mfr}$ (cf. Lemma~\ref{coro: fundamental group of Zm}).
\end{rmk}

\begin{theorem} \label{thm: Legendrian then scheme-theoretic intersection}
    If $\gfr$ is simple and $Z_{\mfr} = Z_{\text{long}}$, then the following are equivalent:
            \begin{enumerate}
                \item \label{cond: legendrian Om} $O_{\mfr}$ is a Legendrian subvariety of $Z_{\text{long}}$;
                \item \label{cond: clean intersection along Om} for each $x \in O_{\mfr}$, $T_{x} Z_{\text{long}} \cap T_{x} \PP(\mfr) = T_{x} O_{\mfr}$ in $T_{x} \PP(\gfr)$; and
                \item \label{cond: scheme theoretic intersection is Om} $O_{\mfr}$ is the scheme-theoretic intersection $Z_{\text{long}} \cap_{\text{sch}} \PP(\mfr)$ in $\PP(\gfr)$.
                That is, the ideal sheaf of $O_{\mfr}$ is the sum of the ideal sheaves of $Z_{\text{long}}$ and $\PP(\mfr)$ in $\PP(\gfr)$.
        \end{enumerate}
\end{theorem}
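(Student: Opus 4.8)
The plan is to translate all three conditions into linear algebra at the base point $x := [v_{\rho}] \in O_{\mfr}$, where $v_{\rho} \in \mfr$ is a highest weight vector (a long root vector of $\gfr$, since $Z_{\mfr} = Z_{\text{long}}$). Writing $\gfr = \hfr \oplus \mfr$ and viewing $T_{x} \PP(\gfr) = \gfr/\CC v_{\rho}$, the relevant tangent spaces are $T_{x} Z_{\text{long}} = [\gfr,\, v_{\rho}]/\CC v_{\rho}$, $T_{x} \PP(\mfr) = \mfr/\CC v_{\rho}$, and $T_{x} O_{\mfr} = [\hfr,\, v_{\rho}]/\CC v_{\rho}$; here $\CC v_{\rho} \subset [\hfr,\, v_{\rho}]$ because $v_{\rho}$ is a $\tfr_{H}$-weight vector of nonzero weight. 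Since $\mfr$ is $\hfr$-stable we have $[\hfr,\, v_{\rho}] \subset \mfr$, so the inclusion $T_{x} O_{\mfr} \subset T_{x} Z_{\text{long}} \cap T_{x} \PP(\mfr)$ is automatic, and condition (\ref{cond: clean intersection along Om}) amounts to the reverse inclusion $[\gfr,\, v_{\rho}] \cap \mfr \subset [\hfr,\, v_{\rho}]$. By homogeneity of $O_{\mfr}$ under $N_{G_{\text{ad}}}(\hfr)$, which preserves both $Z_{\text{long}}$ and $\PP(\mfr)$, it suffices to verify (\ref{cond: clean intersection along Om}) at the single point $x$.

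For (\ref{cond: legendrian Om}) $\Leftrightarrow$ (\ref{cond: clean intersection along Om}) I would argue by dimension. By Proposition \ref{prop:Zm exists}, $O_{\mfr}$ is always an integral (hence isotropic) subvariety of $Z_{\text{long}}$, so it is Legendrian precisely when $\dim O_{\mfr} = \tfrac{1}{2}(\dim Z_{\text{long}} - 1)$, i.e. $2 \dim [\hfr,\, v_{\rho}] = \dim [\gfr,\, v_{\rho}]$. The key computation is the Killing-form identity $[\gfr,\, v_{\rho}]^{\perp} = \gfr_{v_{\rho}}$ (the centralizer of $v_{\rho}$), which holds because $\mathrm{ad}_{v_{\rho}}$ is skew-adjoint for $b_{\gfr}$, so its image and kernel are orthogonal complements. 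Taking orthogonals of $[\gfr,\, v_{\rho}] \cap \mfr$ and using $\mfr^{\perp} = \hfr$ gives $([\gfr,\, v_{\rho}] \cap \mfr)^{\perp} = \gfr_{v_{\rho}} + \hfr$, whence
\[
    \dim([\gfr,\, v_{\rho}] \cap \mfr) = \dim[\gfr,\, v_{\rho}] - \dim\hfr + \dim(\gfr_{v_{\rho}} \cap \hfr) = \dim[\gfr,\, v_{\rho}] - \dim[\hfr,\, v_{\rho}],
\]
where the last equality uses $\gfr_{v_{\rho}} \cap \hfr = \hfr_{v_{\rho}}$ together with $\dim\hfr - \dim\hfr_{v_{\rho}} = \dim[\hfr,\, v_{\rho}]$. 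In view of the automatic inclusion $[\hfr,\, v_{\rho}] \subset [\gfr,\, v_{\rho}] \cap \mfr$, condition (\ref{cond: clean intersection along Om}) is equivalent to $\dim([\gfr,\, v_{\rho}] \cap \mfr) = \dim[\hfr,\, v_{\rho}]$, which by the displayed formula holds if and only if $\dim[\gfr,\, v_{\rho}] = 2\dim[\hfr,\, v_{\rho}]$, exactly the Legendrian condition.

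For (\ref{cond: clean intersection along Om}) $\Leftrightarrow$ (\ref{cond: scheme theoretic intersection is Om}) I would use the comparison between scheme-theoretic intersections and tangent spaces. Writing $Y := Z_{\text{long}} \cap_{\text{sch}} \PP(\mfr)$, the Zariski tangent space of $Y$ at $x$ is always $T_{x} Z_{\text{long}} \cap T_{x} \PP(\mfr)$. Thus (\ref{cond: scheme theoretic intersection is Om}) $\Rightarrow$ (\ref{cond: clean intersection along Om}) is immediate: if $Y = O_{\mfr}$, which is smooth, then this intersection is $T_{x} O_{\mfr}$. Conversely, condition (\ref{cond: clean intersection along Om}) shows that at every point of $O_{\mfr}$ the scheme $Y$ has Zariski tangent space of dimension $\dim O_{\mfr}$; since $O_{\mfr} \subset Y$ is smooth of this dimension, $Y$ is smooth — in particular reduced — along $O_{\mfr}$, and it remains only to prove the set-theoretic equality $Y_{\mathrm{red}} = O_{\mfr}$.

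This set-theoretic equality is the main obstacle, and I would settle it by homogeneity. The locus $Z_{\text{long}} \cap \PP(\mfr) = \PP(\Ocal_{\text{min}} \cap \mfr)$ is stable under $N_{G_{\text{ad}}}(\hfr)$, and $O_{\mfr}$ is its unique closed orbit (Corollary \ref{coro: stabilizer of Om is the normalizer of h}). Hence every orbit contained in $Z_{\text{long}} \cap \PP(\mfr)$ has $O_{\mfr}$ in its closure, so any orbit other than $O_{\mfr}$ has dimension strictly greater than $\dim O_{\mfr}$ while its closure meets $O_{\mfr}$. Such an orbit closure would force the local dimension of $Z_{\text{long}} \cap \PP(\mfr)$ at points of $O_{\mfr}$ to exceed $\dim O_{\mfr}$, contradicting the bound $\dim_{x} Y \le \dim T_{x} Y = \dim O_{\mfr}$ furnished by (\ref{cond: clean intersection along Om}). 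Therefore $Z_{\text{long}} \cap \PP(\mfr) = O_{\mfr}$ set-theoretically, and combined with the smoothness of $Y$ along $O_{\mfr}$ this yields $Y = O_{\mfr}$ as schemes, establishing (\ref{cond: scheme theoretic intersection is Om}).
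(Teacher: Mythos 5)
Your proof is correct, but it takes a genuinely different route from the paper's on the key equivalence (1) $\Leftrightarrow$ (2). The paper first has to know that $\rho \notin R_{\hfr}$ — a fact it extracts from the classification results (Corollary \ref{coro: corollary of table, characterization of B3 G2} together with Propositions \ref{prop: highest weight orbits for symm iso irre pair} and \ref{prop: rho is a highest root sp}) — and then computes the dimension of the \emph{sum} $T_{[v_{\rho}]} Z_{\text{long}} + T_{[v_{\rho}]} \PP(\mfr)$ by an explicit weight-space analysis (Lemma \ref{lem: dimension of sum of tangent sp of Z and Pm}), recovering the intersection via the rank formula. You instead compute the \emph{intersection} directly: skew-adjointness of $\mathrm{ad}_{v_{\rho}}$ gives $[\gfr,\, v_{\rho}]^{\perp} = \gfr_{v_{\rho}}$, and $(A \cap B)^{\perp} = A^{\perp} + B^{\perp}$ together with $\mfr^{\perp} = \hfr$ yields the uniform identity $\dim([\gfr,\, v_{\rho}] \cap \mfr) = \dim [\gfr,\, v_{\rho}] - \dim [\hfr,\, v_{\rho}]$, so that condition (2) becomes exactly $\dim[\gfr,\,v_{\rho}] = 2\dim[\hfr,\,v_{\rho}]$, i.e.\ the Legendrian dimension count. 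This is cleaner and classification-free: no case analysis, no need for $\rho \notin R_{\hfr}$, and the identity holds at any nilpotent element with respect to any orthogonal reductive decomposition. What the paper's lemma buys in exchange is an explicit weight-space description of $T_{[v_{\rho}]} Z_{\text{long}} + T_{[v_{\rho}]} \PP(\mfr)$, which fits the weight-theoretic framework used elsewhere in the paper. For (2) $\Leftrightarrow$ (3) your argument is essentially the paper's: the set-theoretic equality via the unique closed orbit in $\PP(\mfr)$, compactness of $Z_{\text{long}}$, and the tangent-space dimension bound is the same mechanism; the only difference is that where the paper invokes \cite[Lemma 5.1]{Li2009WonderfulCompactification}, you supply the commutative algebra yourself (the Zariski tangent space of a scheme-theoretic intersection is the intersection of the tangent spaces, and local dimension equal to embedding dimension forces the local rings to be regular, hence reduced), which makes this step self-contained at the cost of re-proving a standard lemma.
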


\begin{proof}
    Assume that $Z_{\mfr} = Z_{\text{long}}$, i.e., $O_{\mfr} \subset Z_{\text{long}}$.
    The condition (\ref{cond: scheme theoretic intersection is Om}) implies the condition (\ref{cond: clean intersection along Om}) by \cite[Lemma 5.1]{Li2009WonderfulCompactification}.
    To see the converse implication (\ref{cond: clean intersection along Om}) $\Rightarrow$ (\ref{cond: scheme theoretic intersection is Om}), by the same lemma, it suffices to show that the condition (\ref{cond: clean intersection along Om}) implies $O_{\mfr} = Z_{\text{long}} \cap \PP(\mfr)$ set-theoretically.
    Consider the inequalities
    \[
        \dim (T_{[v_{\rho}]} Z_{\text{long}} \cap T_{[v_{\rho}]} \PP(\mfr)) \ge \dim_{[v_{\rho}]} (Z_{\text{long}} \cap \PP(\mfr)) \ge \dim O_{\mfr}
    \]
    where $\dim_{[v_{\rho}]}(Z_{\text{long}} \cap \PP(\mfr))$ denotes the maximum among dimensions of irreducible components of $Z_{\text{long}} \cap \PP(\mfr)$ containing $[v_{\rho}]$.
    Since $O_{\mfr}$ is a unique closed $H$-orbit in $\PP(\mfr)$ and $Z_{\text{long}}$ is compact, $O_{\mfr}$ is contained in every irreducible component of $Z_{\text{long}} \cap \PP(\mfr)$, and hence $\dim_{[v_{\rho}]}(Z_{\text{long}} \cap \PP(\mfr)) = \dim(Z_{\text{long}} \cap \PP(\mfr))$.
    Therefore if the condition (\ref{cond: clean intersection along Om}) holds, then $\dim O_{\mfr} = \dim(Z_{\text{long}} \cap \PP(\mfr))$.
    Since $O_{\mfr}$ is compact and contained in every irreducible component of $Z_{\text{long}} \cap \PP(\mfr)$, we see that $O_{\mfr} = Z_{\text{long}} \cap \PP(\mfr)$ set-theoretically, and hence (\ref{cond: clean intersection along Om}) $\Rightarrow$ (\ref{cond: scheme theoretic intersection is Om}).

    Next, we show the equivalence (\ref{cond: legendrian Om}) $\Leftrightarrow$ (\ref{cond: clean intersection along Om}).
    If $\rho$ is a root of $\hfr$, then by Corollary \ref{coro: corollary of table, characterization of B3 G2}, Proposition \ref{prop: highest weight orbits for symm iso irre pair} and Proposition \ref{prop: rho is a highest root sp}, $Z_{\mfr}\not=Z_{\text{long}}$, a contradiction.
    Thus $\rho$ is not a root of $\hfr$, and so the equivalence (\ref{cond: legendrian Om}) $\Leftrightarrow$ (\ref{cond: clean intersection along Om}) follows from the inclusion $T_{[v_{\rho}]} Z_{\text{long}} \cap T_{[v_{\rho}]} \PP(\mfr) \supset T_{[v_{\rho}]} O_{\mfr}$, the inequality
    \[
        \dim Z_{\text{long}} + \dim \PP(\mfr) - \dim (T_{[v_{\rho}]} Z_{\text{long}} + T_{[v_{\rho}]} \PP(\mfr)) = \dim (T_{[v_{\rho}]} Z_{\text{long}} \cap T_{[v_{\rho}]} \PP(\mfr)) \ge \dim O_{\mfr},
    \]
    and the following lemma.
\end{proof}

\begin{lemma} \label{lem: dimension of sum of tangent sp of Z and Pm}
    If $\rho \not\in R_{\hfr}$, then
    \[
        \dim (T_{[v_{\rho}]} Z_{\mfr} + T_{[v_{\rho}]} \PP(\mfr)) = \dim \mfr + \dim O_{\mfr}
    \]
    where the sum of the tangent spaces is taken in $T_{[v_{\rho}]} \PP(\gfr)$.
\end{lemma}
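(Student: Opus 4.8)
The plan is to carry out the entire computation inside $T_{[v_{\rho}]}\PP(\gfr) \simeq \gfr/\CC v_{\rho}$. First I would record the two subspaces whose sum is wanted. The orbit tangent space gives $T_{[v_{\rho}]}Z_{\mfr} = ([\gfr,\,v_{\rho}] + \CC v_{\rho})/\CC v_{\rho}$, and $T_{[v_{\rho}]}\PP(\mfr) = \mfr/\CC v_{\rho}$. Since $\rho \notin R_{\hfr}$, Proposition \ref{prop: rho is a root sp of g if it is not a root of h} lets me take $v_{\rho} = E_{\gamma}$ to be a root vector of $\gfr$, so that $[\tfr,\,v_{\rho}] = \CC v_{\rho}$ and in particular $v_{\rho} \in [\gfr,\,v_{\rho}]$; hence $T_{[v_{\rho}]}Z_{\mfr} = [\gfr,\,v_{\rho}]/\CC v_{\rho}$. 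The sum in question is then $([\gfr,\,v_{\rho}] + \mfr)/\CC v_{\rho}$, whose dimension is $\dim([\gfr,\,v_{\rho}]+\mfr) - 1$ because $v_{\rho} \in \mfr$.

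The second step simplifies $[\gfr,\,v_{\rho}] + \mfr$. Writing $\gfr = \hfr \oplus \mfr$, I have $[\gfr,\,v_{\rho}] = [\hfr,\,v_{\rho}] + [\mfr,\,v_{\rho}]$, and since $v_{\rho} \in \mfr$ and $\mfr$ is an $\hfr$-module, $[\hfr,\,v_{\rho}] \subseteq \mfr$. Thus $[\gfr,\,v_{\rho}] + \mfr = [\mfr,\,v_{\rho}] + \mfr$, and projecting onto $\hfr$ along $\mfr$ via $\pi_{\hfr} : \gfr \to \hfr$ yields $\dim([\gfr,\,v_{\rho}] + \mfr) = \dim \mfr + \dim \pi_{\hfr}([\mfr,\,v_{\rho}])$. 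So the lemma reduces to the single identity $\dim \pi_{\hfr}([\mfr,\,v_{\rho}]) = \dim O_{\mfr} + 1$.

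The crux, which I expect to carry the real content, is to evaluate this rank by a Killing-form duality. I would introduce $\psi : \mfr \to \hfr$, $x \mapsto \pi_{\hfr}([x,\,v_{\rho}])$, whose image is exactly $\pi_{\hfr}([\mfr,\,v_{\rho}])$, together with $\eta : \hfr \to \mfr$, $h \mapsto [v_{\rho},\,h]$ (which lands in $\mfr$ since $[\hfr,\,v_{\rho}] \subseteq \mfr$). Using $b_{\gfr}(\mfr,\,\hfr) = 0$, so that $b_{\gfr}(\pi_{\hfr}(u),\,h) = b_{\gfr}(u,\,h)$ for all $u \in \gfr$ and $h \in \hfr$, together with the invariance of $b_{\gfr}$, I get for $x \in \mfr$, $h \in \hfr$ that $b_{\gfr}(\psi(x),\,h) = b_{\gfr}([x,\,v_{\rho}],\,h) = b_{\gfr}(x,\,[v_{\rho},\,h]) = b_{\gfr}(x,\,\eta(h))$. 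Hence $\psi$ and $\eta$ are transposes with respect to the non-degenerate restrictions $b_{\gfr}|_{\hfr}$ and $b_{\gfr}|_{\mfr}$, so $\rank \psi = \rank \eta = \dim[\hfr,\,v_{\rho}]$. Finally, since $v_{\rho}$ is the highest weight vector of $\mfr$ with $[\tfr_{H},\,v_{\rho}] = \CC v_{\rho}$, the space $[\hfr,\,v_{\rho}]$ is the direct sum of $\CC v_{\rho}$ and the lower-weight spaces $[\hfr_{-},\,v_{\rho}]$ obtained by lowering, and the latter is precisely $T_{[v_{\rho}]}O_{\mfr}$; thus $\dim[\hfr,\,v_{\rho}] = \dim O_{\mfr} + 1$. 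Combining, $\dim\pi_{\hfr}([\mfr,\,v_{\rho}]) = \dim O_{\mfr}+1$, so the sum has dimension $\dim \mfr + \dim O_{\mfr}$, as claimed.

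The main obstacle is making the transpose identity airtight: I must confirm that $b_{\gfr}$ restricts non-degenerately to both $\hfr$ (cited earlier) and $\mfr = \hfr^{\perp}$, and then track the projectivization bookkeeping so that the competing shifts (the $-1$ from passing to $\gfr/\CC v_{\rho}$ and the $+1$ in $\dim[\hfr,\,v_{\rho}]$) cancel correctly. A secondary point worth isolating is the equality $\dim[\hfr,\,v_{\rho}] = \dim O_{\mfr} + 1$ rather than $\dim O_{\mfr}$, i.e. that $v_{\rho}$ itself lies in $[\hfr,\,v_{\rho}]$; this rests on $\rho$ being a nonzero weight of $\tfr_{H}$, which holds since $v_{\rho}$ is nilpotent.
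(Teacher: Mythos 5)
Your proof is correct, and it takes a genuinely different route from the paper's. The paper argues through the weight decomposition of $\mfr$ under $\tfr_{H}$: it shows
\[
\sum_{w \in W}[\mfr_{w},\, v_{\rho}] + \mfr \;=\; \CC \cdot h_{\rho} \,\oplus \bigoplus_{\alpha \in R^{+}_{\hfr} :\, \alpha - \rho \in W} \hfr_{\alpha} \,\oplus\, \mfr,
\]
using invariance and non-degeneracy of $b_{\gfr}$ to see that each root space $\hfr_{\alpha}$ with $\alpha-\rho\in W$ is actually attained, and then counts roots: $\#\{\alpha \in R^{+}_{\hfr} : \rho - \alpha \in W\} = \dim O_{\mfr}$; the hypothesis $\rho \notin R_{\hfr}$ enters there to guarantee $[\mfr_{0},\,v_{\rho}] \subset \mfr_{\rho}$, i.e.\ that the zero-weight part of $\mfr$ contributes no $\hfr$-component. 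You compress all of this into a single coordinate-free rank identity, $\dim \pi_{\hfr}([\mfr,\,v_{\rho}]) = \rank \psi = \rank \eta = \dim[\hfr,\,v_{\rho}] = \dim O_{\mfr}+1$, where the middle equality is Killing-form adjunction; this needs exactly the non-degeneracy of $b_{\gfr}|_{\hfr}$ (cited in the paper) and of $b_{\gfr}|_{\mfr}$, which follows since $\mfr=\hfr^{\perp}$ and $b_{\gfr}$ is non-degenerate on $\gfr$. The same two ingredients — invariance and non-degeneracy of $b_{\gfr}$ — drive both proofs, but your packaging avoids all root combinatorics. Two further remarks. First, your appeal to Proposition \ref{prop: rho is a root sp of g if it is not a root of h} is superfluous: $v_{\rho}\in[\gfr,\,v_{\rho}]$ already follows from $[\tfr_{H},\,v_{\rho}]=\CC v_{\rho}$ and $\rho\neq 0$. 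Second — and this is what your route buys — your argument then never uses $\rho\notin R_{\hfr}$ at all, so it establishes the dimension formula for every isotropy irreducible pair (one can check it directly, e.g., for the symmetric pair $(A_{3},\,C_{2})$, where $\rho=\delta_{\text{short}}\in R_{\hfr}$); the hypothesis costs the paper nothing, since Theorem \ref{thm: Legendrian then scheme-theoretic intersection} invokes the lemma only after excluding $\rho\in R_{\hfr}$, and what the paper's computation yields in exchange is an explicit description of the subspace $T_{[v_{\rho}]}Z_{\mfr}+T_{[v_{\rho}]}\PP(\mfr)$, not merely its dimension.
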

\begin{proof}
    If we identify $T_{[v_{\rho}]} \PP(\gfr) \simeq \gfr / \mfr_{\rho}$, then
    \[
        T_{[v_{\rho}]} Z_{\mfr} + T_{[v_{\rho}]} \PP(\mfr) = ([\gfr,\, v_{\rho}] + \mfr)/\mfr_{\rho} = \left(\sum_{w \in W}[\mfr_{w},\, v_{\rho}] + \mfr\right) / \mfr_{\rho}
    \]
    since $[\hfr,\, v_{\rho}] \subset \mfr$ and $[\mfr_{0},\, \mfr_{\rho}] \subset \mfr_{\rho}$ (as $\rho$ is not a root of $\hfr$).
    For a weight vector $v_{w} \in \mfr_{w}$, if $w = -\rho$, then the $\hfr$-component of $[v_{-\rho},\, v_{\rho}]$ is nonzero and spans $\CC \cdot h_{\rho}$ where $h_{\rho}\in\tfr_{H}$ is the $b|_{\tfr_{H}}$-dual of $\rho$, i.e., $b(h_{\rho},\, -) = \rho(-)$ on $\tfr_{H}$, for the Killing form $b$ of $\gfr$.
    If $w \not= -\rho$, then the $\hfr$-component of $[v_{w},\,v_{\rho}]$ is contained in $\bigoplus_{\alpha \in R^{+}_{\hfr} : \alpha - \rho \in W} \hfr_{\alpha}$, and hence
    \[
        \sum_{w \in W}[\mfr_{w},\, v_{\rho}] + \mfr \subset \CC \cdot h_{\rho} \oplus \bigoplus_{\alpha \in R^{+}_{\hfr} : \alpha - \rho \in W} \hfr_{\alpha} \oplus \mfr.
    \]
    
    Observe that for $\alpha \in R_{\hfr}^{+}$, since $\rho + \alpha \not\in W$, $\rho - \alpha \in W$ (equivalently, $\alpha - \rho \in W$ as $\mfr$ is self-dual) if and only if $\alpha$ is not orthogonal to $\rho$.
    Thus the number of $\alpha \in R^{+}_{\hfr}$ such that $\rho - \alpha \in W$ is equal to $\dim O_{\mfr}$.
    Furthermore, for $\alpha \in R^{+}_{\hfr}$ satisfying $\rho - \alpha \in W$, we have $[v_{\rho}, \, E_{-\alpha}] \not= 0$, and so there is $v_{\alpha - \rho} \in \mfr_{\alpha - \rho}$ such that the $\hfr$-component of $[v_{\alpha - \rho}, \, v_{\rho}]$ is nonzero, since
    \[
        b([v_{\alpha-\rho}, \, v_{\rho}], \, E_{-\alpha}) = b(v_{\alpha-\rho}, \, [v_{\rho}, \, E_{-\alpha}])
    \]
    and since $b$ is non-degenerate.
    Therefore $\bigoplus_{\alpha\in R^{+}_{\hfr}: \alpha - \rho \in W} \hfr_{\alpha}\subset \sum_{w \in W}[\mfr_{w},\, v_{\rho}] + \mfr$, and hence
    \[
        \sum_{w \in W}[\mfr_{w},\, v_{\rho}] + \mfr = \CC \cdot h_{\rho} \oplus \bigoplus_{\alpha \in R^{+}_{\hfr} : \alpha - \rho \in W} \hfr_{\alpha} \oplus \mfr.
    \]

    To summarize, we have
    \[
        T_{[v_{\rho}]} Z_{\mfr} + T_{[v_{\rho}]} \PP(\mfr) = \left(\CC \cdot h_{\rho} \oplus \bigoplus_{\alpha \in R^{+}_{\hfr} : \alpha - \rho \in W} \hfr_{\alpha} \oplus \mfr \right) / \mfr_{\rho},
    \]
    and its dimension is equal to
    \[
        (1 + \dim O_{\mfr} + \dim \mfr) - 1.
    \]
\end{proof}

\begin{rmk} \label{rmk: linear section}
    \begin{enumerate}
        \item Theorem \ref{thm: Legendrian then scheme-theoretic intersection} implies that Legendrian subvarieties of adjoint varieties in Theorem \ref{main thm: adjoint} not arising from symmetric subalgebras of Hermitian type (i.e., not obtained by Proposition \ref{prop: Legendrian for IHSS}) are scheme-theoretic linear sections.

        \item Conversely, Buczy\'nski \cite[Corollary E.24]{Buczynski2008AlgebraicLegendrian} obtains the following result: if a smooth projective Legendrian subvariety of an adjoint variety is a scheme-theoretic linear section, then it is necessarily homogeneous under the action of its stabilizer in $S_{\text{ad}}$.
    \end{enumerate}
\end{rmk}

\section{Proof of Theorems \ref{main thm: adjoint}--\ref{main thm: semi simple} and Corollaries} \label{section: proof of classification}

    Finally, we complete the proof of Theorems \ref{main thm: adjoint}--\ref{main thm: semi simple}.

    \begin{proof}[Proof of Theorems \ref{main thm: adjoint}--\ref{main thm: semi simple}]
        First, the items in Theorems \ref{main thm: adjoint}--\ref{main thm: semi simple} indeed give us Legendrian subvarieties $O$ of nilpotent orbits $Z \subset \PP(\sfr)$ by the results of Section \ref{section: Structure of Isotropy Representation}: for Theorems \ref{main thm: adjoint}(1) and \ref{main thm: semi simple}(2.a--f), see Propositions \ref{prop: Legendrian for IHSS}, \ref{prop:Om when g is not simple} and \ref{prop: highest weight orbits for symm iso irre pair}; for Theorems \ref{main thm: adjoint}(2) and \ref{main thm: semi simple}(2.g), see Proposition \ref{prop: rho is a highest root sp}.

    For the converse, as in the statement, let $\tilde{\sfr}$ be a semi-simple Lie algebra, $Z$ a nilpotent orbit, and $\sfr$ the smallest ideal of $\tilde{\sfr}$ such that $Z \subset \PP(\sfr)$.
    (For example, if $\tilde{\sfr}$ is simple as in Theorem \ref{main thm: adjoint}, then $\tilde{\sfr} = \sfr$.)
    Assume that $O$ is a projective Legendrian subvariety of $Z$, and that $O$ is $\text{Stab}_{\tilde{S}_{\text{ad}}}(O)$-homogeneous.
    If $\tilde{S}_{\text{ad}}$ and $S_{\text{ad}}$ are the adjoint groups of $\tilde{\sfr}$ and $\sfr$, respectively, then we can write $\tilde{S}_{\text{ad}} = S_{\text{ad}} \times S'$ where $S'$ is the product of the remaining simple factors of $\tilde{S}_{\text{ad}}$.
    Since $S'$ acts trivially on $\sfr$, $Z$ is an $S_{\text{ad}}$-orbit in $\PP(\sfr)$ and $O$ is homogeneous under the action of $\text{Stab}_{S_{\text{ad}}}(O)$.
    Observe that the contact structures on $Z$ as a nilpotent orbit in $\PP(\tilde{\sfr})$ and as a nilpotent orbit in $\PP(\sfr)$ coincide, since $\sfr$ is an ideal of $\tilde{\sfr}$ and so the Killing forms of $\tilde{\sfr}$ and $\sfr$ are related via $b_{\tilde{\sfr}}|_{\sfr} = b_{\sfr}$.

    Next, by applying Theorem \ref{thm: iso irre var as Legendrian moduli} to the triple $O \subset Z \subset \PP(\sfr)$, we see that $S_{\text{ad}}$ acts on $M:=S_{\text{ad}}/\text{Stab}_{S_{\text{ad}}}(O)$ effectively, $\sfr/\ofr$ is an irreducible $\ofr^{\text{Levi}}$-representation, and $O \subset \PP(V)$ where $\ofr := T_{e} \text{Stab}_{S_{\text{ad}}}(O)$ and $V := \ofr^{\perp} = \{x \in \sfr : b_{\sfr}(\ofr,\, x) = 0\}$.
    In particular, since $\ofr$ is a maximal subalgebra of $\sfr$ (by the irreducibility of $\sfr/\ofr$), there are two possibilities: $\ofr$ is either reductive or parabolic (cf. \cite[Corollaire~1, \S10, Ch.~VIII]{Bourbaki}; see also \cite[Theorem, \S30.4, Ch.~X]{Humphreys}).
    \begin{itemize}
        \item If $\ofr$ is reductive, i.e., $\ofr = \ofr^{\text{Levi}}$, then $(\sfr,\,\ofr)$ is an isotropy irreducible pair, and $M$ is an isotropy irreducible variety of type $(\sfr,\, \ofr)$.
        Moreover, $V(\simeq \sfr/\ofr)$ is an irreducible $\text{Stab}_{S_{\text{ad}}}(O)$-representation, and hence $O \subset \PP(V)$ is its unique closed orbit (that is, $O_{\mfr} \subset \PP(\mfr)$ for $(\sfr,\,\ofr)$).
        Thus $O \subset Z$ can be obtained by Propositions \ref{prop:Om when g is not simple}, \ref{prop: highest weight orbits for symm iso irre pair}, and \ref{prop: rho is a highest root sp}, and all the possible cases are listed in Theorems \ref{main thm: adjoint}--\ref{main thm: semi simple}.
    
        \item If $\ofr$ is parabolic, then $M$ is a projective rational homogeneous space.
        Assume that $\sfr$ is not simple.
        Since $S_{\text{ad}}$ acts on $M$ effectively, $M$ splits into smaller rational homogeneous spaces, say $M \simeq \prod_{i} M_{i}$.
        It is a contradiction, since $\sfr/\ofr \simeq T_{[O]} M \simeq \bigoplus_{i} T_{[O]}M_{i}$ is not an irreducible $\ofr$-representation.
        Thus $\sfr$ is simple, and the irreducibility of $\sfr/\ofr$ implies that $M$ is an irreducible Hermitian symmetric space under the $S_{\text{ad}}$-action (cf. \cite[\S3.1]{LandsbergManivel2003ProjectiveGeometry}).
        Therefore $O \subset Z$ can be obtained by Proposition \ref{prop: Legendrian for IHSS}, which is belonging to Theorem \ref{main thm: adjoint}(1).
    \end{itemize}
    \end{proof}

\begin{exa}
    Recall that if $\sfr$ is of type $A$, then $Z_{\text{long}} \simeq \PP T^{*} \PP^{n+1}$, and every Legendrian submanifold is of form $\PP N^{*}_{Y/\PP^{n+1}}$.
    Using Theorem \ref{main thm: adjoint}, one may show that the images of equivariant Legendrian embeddings of rational homogeneous spaces into $\PP T^{*} \PP^{n+1}$ are given as $\PP N^{*}_{Y/\PP^{n+1}}$ where $Y \subset \PP^{n+1}$ is the highest weight orbit associated to one of the following, where the index means the corresponding item in Theorem \ref{main thm: adjoint}:
\begin{description}
    \item[(1)] the standard $\mathfrak{sl}(l)$-representation ($1 \le l \le n+1$), and hence $Y \subset \PP^{n+1}$ is a linear subspace $\PP^{l-1}$; \\
    \item[(1)] the standard $\mathfrak{so}(n+2)$-representation, and hence $Y \subset \PP^{n+1}$ is a quadric hypersurface $\QQ^{n}$; \\
    \item[(2.g)] ${\dynkin[labels={1}] A{x*.*} \text{($l-1$ nodes)} \otimes \dynkin[labels={1}] A{x}}$, and hence $Y \subset \PP^{n+1}=\PP^{2l-1}$ is the Segre embedding of $\PP^{l-1} \times \PP^{1}$; \\
    \item[(2.h)] ${\dynkin[labels={,,,,1}] D{****x}}$, and hence $Y \subset \PP^{n+1}=\PP^{15}$ is the Spinor variety $\SS_{5}$; and \\
    \item[(2.i)] ${\dynkin[labels={,1}] A{*x**}}$, and hence $Y \subset \PP^{n+1}=\PP^{9}$ is the Pl\"ucker embedding of $\text{Gr}(2,\, 5)$.
\end{description}

\end{exa}

Next, we prove two corollaries presented in the introduction.
In the following, as before, for a Lie group $R$, $R^{0}$ denotes the identity component.

    \begin{coro} \label{coro: not IHSS but Legendrian}
    There exist triples $O \hookrightarrow Z \subset \PP(\sfr)$ such that
    \begin{enumerate}
        \item $O$ is a projective Legendrian subvariety of a nilpotent orbit $Z \subset \PP(\sfr)$ such that $O$ is $\text{Stab}_{S_{\text{ad}}}(O)$-homogeneous;
        \item $O$ is indecomposable as a rational homogeneous space; and
        \item $O$ is not Hermitian symmetric with respect to the $\text{Aut}(O)^{0}$-action.
    \end{enumerate}
    The following list is a complete list of such triples  $O \hookrightarrow Z \subset \PP(\sfr)$ such that no proper ideal $\ifr < \sfr$ satisfies $Z \subset \PP(\ifr)$:
        \begin{itemize}
    \item (Orthogonal partial flag variety) $\text{OFl}(4,\, 5;\, \CC^{10}) \hookrightarrow Z_{\text{long}} \subset \PP(\mathfrak{sl}(16))$.

    \item (Partial flag variety) $\text{Fl}(2,\,3;\, \CC^{5}) \hookrightarrow Z_{\text{long}} \subset \PP(\mathfrak{sl}(10))$.

    \item (Orthogonal Grassmannian) $\text{OG}(3,\, \CC^{9}) \hookrightarrow Z_{\text{long}} \subset \PP(\mathfrak{so}(16))$.

    \item (Isotropic Grassmannian) $\text{IG}(2,\, \CC^{2l}) \hookrightarrow Z_{[2^{2},\, 1^{2l-4}]} \subset \PP(\mathfrak{sl}(2l))$, $l > 2$.

    \item $F_{4}/P_{1} \hookrightarrow Z_{2A_{1}} \subset \PP(E_{6})$.

    \item $Z_{\text{long}} \hookrightarrow \PP(\Ocal_{\text{min}}\oplus \Ocal_{\text{min}}) \subset \PP(\hfr' \oplus \hfr')$ for a simple Lie algebra $\hfr'$ not of type $C$.
    \end{itemize}
    \end{coro}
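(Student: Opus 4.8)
The plan is to obtain the list as the sublist of survivors of the full classification already in hand. Since we impose that no proper ideal $\ifr < \sfr$ has $Z \subset \PP(\ifr)$, the Lie algebra $\sfr$ is the smallest ideal containing $Z$, which is exactly the hypothesis of Theorem \ref{main thm: semi simple}; hence every triple $O \hookrightarrow Z \subset \PP(\sfr)$ satisfying the first condition is one of the finitely many items of Theorem \ref{main thm: adjoint} (when $\sfr$ is simple and $Z = Z_{\text{long}}$) or of Theorem \ref{main thm: semi simple}(2). So the proof reduces to a finite inspection: for each item I would identify the abstract rational homogeneous space $O$ from its highest weight $\rho$ (equivalently its marked Dynkin diagram, using the data of Section \ref{section: Tables}), and then test the two remaining conditions, indecomposability and not being Hermitian symmetric.

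The two tests rest on standard criteria, but must both be applied to the \emph{abstract} isomorphism type of $O$, i.e. relative to $\text{Aut}(O)^{0}$ rather than to the given $\lfr$-action. Writing $O \simeq \text{Aut}(O)^{0}/P$, indecomposability holds exactly when $\text{Aut}(O)^{0}$ is simple, equivalently when the marked diagram presenting $O$ is connected; and $O$ is Hermitian symmetric exactly when $P$ is a maximal parabolic whose unique marked node is cominuscule (coefficient $1$ in the highest root), so that the nilradical of $P$ is abelian. The subtlety is that $\text{Aut}(O)^{0}$ may be strictly larger than the group appearing in the construction, so low-dimensional and exceptional coincidences must be resolved before applying the criteria.

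Carrying out the inspection, the symmetric pairs with $\sfr$ simple (Theorem \ref{main thm: adjoint}(1), recorded in Propositions \ref{prop: Legendrian for IHSS} and \ref{prop: highest weight orbits for symm iso irre pair} and Tables \ref{table: Legendrian asso to IHSS}, \ref{table: isotropy irreducible pairs of equal rank}, \ref{table: symmetric isotropy irreducible pairs of different rank with g simple}) all produce an $O$ that is either a product of two rational homogeneous spaces or a compact Hermitian symmetric space; this is visible from the tables and reflects the general fact that the closed orbit in the isotropy representation of a symmetric space is a symmetric $R$-space. Hence none of them survive. Among the non-symmetric items of Theorem \ref{main thm: adjoint}(2), cases (2.a)--(2.f) ($\sfr$ of type $C$) are the subadjoint varieties, all Hermitian symmetric, while the tensor-product cases (2.g), (2.k), (2.l) are Segre products (respectively the adjoint variety of $A_{l-1}$ times $\PP^{1}$, $\text{IG}(2,\CC^{2l}) \times \PP^{1}$, and $(F_{4}/P) \times \PP^{1}$); only (2.h), (2.i), (2.j) survive, yielding $\text{OFl}(4,5;\CC^{10})$, $\text{Fl}(2,3;\CC^{5})$ and $\text{OG}(3,\CC^{9})$. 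From Theorem \ref{main thm: semi simple}(2), the product case (2.b) and the quadric cases (2.d), (2.g) are excluded, case (2.e) is the subtle one discussed below, while (2.a) survives precisely when $\hfr'$ is not of type $C$ (type $C$ forcing the adjoint variety of $\hfr'$ to be an odd-dimensional projective space), (2.c) survives precisely for $l > 2$, and (2.f) survives because $F_{4}$ has no cominuscule node, giving $F_{4}/P_{1}$. Collecting these and reading off $Z \subset \PP(\sfr)$ from the corresponding theorem items produces exactly the six triples listed.

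The delicate point -- and the step I expect to be the main obstacle -- is precisely the $\text{Aut}(O)^{0}$ bookkeeping in the exceptional and borderline cases, where the naive $\lfr$-action understates the symmetry of $O$. The decisive instance is Theorem \ref{main thm: semi simple}(2.e): there $O$ is the spinor variety $\text{OG}(4,\CC^{9})$ of $\mathfrak{so}(9)$, which looks non-Hermitian-symmetric under the $B_{4}$-action (the spinor node of $B_{4}$ is not cominuscule), yet is abstractly isomorphic to the spinor variety $\SS_{5}$ of $\mathfrak{so}(10)$ and hence \emph{is} Hermitian symmetric under the larger group $D_{5} = \text{Aut}(O)^{0}$; this is why (2.e) does not appear in the list. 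Similarly one must use $\text{IG}(2,\CC^{4}) \simeq \QQ^{3}$ to discard $l = 2$ in (2.c), treat the twisted-cubic and Veronese identifications arising among the subadjoint and symmetric cases, and confirm that no analogous enlargement occurs for the three surviving orthogonal and partial-flag varieties. I would organize all of this by tabulating, for every item of the two main theorems, the abstract type of $O$ together with the answers to ``is it a product?'' and ``is it cominuscule?'', after which the six survivors are exactly those answering ``no'' to both.
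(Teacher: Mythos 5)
Your proposal is correct and follows essentially the same route as the paper: the paper's proof is just a pointer list identifying the six triples with Theorem \ref{main thm: adjoint}(2.h), (2.i), (2.j) and Theorem \ref{main thm: semi simple}(2.c), (2.f), (2.a), leaving implicit exactly the case-by-case inspection (including the $\text{Aut}(O)^{0}$-enlargement issues such as $\text{OG}(4,\CC^{9}) \simeq \SS_{5}$ in case (2.e) and $\text{IG}(2,\CC^{4})\simeq \QQ^{3}$ for $l=2$) that you carry out explicitly, and your inspection reaches the same six survivors. The only caveat is your side remark that closed orbits of symmetric-pair isotropy representations are always products or Hermitian symmetric: as stated in general this would contradict the surviving symmetric cases $\text{IG}(2,\CC^{2l})$ and $F_{4}/P_{1}$ from Theorem \ref{main thm: semi simple}(2.c), (2.f), but since you only invoke it for the $Z=Z_{\text{long}}$ cases of Theorem \ref{main thm: adjoint}(1), where the tables confirm it, this does not affect the argument.
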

    \begin{proof}
        It is a direct consequence of Theorems \ref{main thm: adjoint}--\ref{main thm: semi simple}.
        Namely, see the following:
        \begin{itemize}
    \item Theorem \ref{main thm: adjoint}(2.h).

    \item Theorem \ref{main thm: adjoint}(2.i).

    \item Theorem \ref{main thm: adjoint}(2.j).

    \item Theorem \ref{main thm: semi simple}(2.c).

    \item Theorem \ref{main thm: semi simple}(2.f).

    \item Theorem \ref{main thm: semi simple}(2.a).
\end{itemize}
    \end{proof}

As the last application, we consider the case where the action of $\text{Aut}(O)^{0}$ does not extend to the $S_{\text{ad}}$-action.

\begin{coro} \label{coro: extend aut}
        Denote by $S_{\text{sc}}$ the simply connected Lie group associated to $\sfr$.
        Let $Z \subset \PP(\sfr)$ be a nilpotent orbit such that there is no proper ideal $\ifr < \sfr$ satisfying $Z \subset \PP(\ifr)$.
        Assume $O$ is a projective Legendrian subvariety of $Z$ such that $O$ is $\text{Stab}_{S_{\text{ad}}}(O)$-homogeneous but the restriction $\text{Stab}_{S_{\text{ad}}}(O)^{0} \rightarrow \text{Aut}(O)^{0}$ is not surjective.
        There exists a simple Lie algebra $\tilde{\sfr}$ containing $\sfr$ as a subalgebra satisfying the following conditions:
        \begin{enumerate}
            \item the adjoint variety $\tilde{Z}_{\text{long}} \subset \PP(\tilde{\sfr})$ contains an open $S_{\text{sc}}$-orbit $Z^{\text{sc}} \subset \tilde{Z}_{\text{long}}$ such that the orthogonal projection $\tilde{\sfr} \twoheadrightarrow \sfr$ induces an $S_{\text{sc}}$-equivariant universal covering $\varphi : Z^{\text{sc}} \rightarrow Z$ that is 2-to-1 and preserves the contact structures;
            \item $O$ admits an embedding $\iota: O \hookrightarrow Z^{\text{sc}}$ such that $\iota(O)$ is a Legendrian subvariety of $\tilde{Z}_{\text{long}}$ that is $\text{Stab}_{\tilde{S}_{\text{ad}}}(\iota(O))$-homogeneous, and the following diagram is commutative:
            \begin{center}
                \begin{tikzcd}
                    & Z^{\text{sc}} \arrow[r, phantom, sloped, "\subset"] \arrow[d, two heads, "\varphi"] & \tilde{Z}_{\text{long}} \arrow[r, phantom, sloped, "\subset"] & \PP(\tilde{\sfr}) \\
                   O \arrow[ru, hook, "\iota"] \arrow[r, phantom, sloped, "\subset"] & Z \arrow[r, phantom, sloped, "\subset"] & \PP(\sfr); &
                \end{tikzcd}
            \end{center}
            and
            \item if $\tilde{S}_{\text{ad}}$ is the adjoint group of $\tilde{\sfr}$, then the restriction $\text{Stab}_{\tilde{S}_{\text{ad}}}(\iota(O))^{0} \rightarrow \text{Aut}(O)^{0}$ is surjective.
        \end{enumerate}
        Moreover, the quadruple $O \hookrightarrow Z \subset \PP(\sfr) \hookrightarrow \PP(\tilde{\sfr})$ is one of the following:
        \begin{itemize}
            \item $\PP^{2l-1} \hookrightarrow \PP(\Ocal_{\text{min}} \oplus \Ocal_{\text{min}}) \subset \PP(C_{l} \oplus C_{l}) \hookrightarrow \PP(C_{2l})$, $l\ge2$.
            In this case, $(\sfr,\,Z,\,O)$ is from Theorem \ref{main thm: semi simple}(2.a) with $\hfr' = C_{l}$, while $(\tilde{\sfr},\, \tilde{Z}_{\text{long}},\, \iota(O))$ is from Theorem \ref{main thm: adjoint}(1) and Proposition \ref{prop: Legendrian for IHSS} with $S_{\text{ad}}/P = \text{LG}(2l,\, \CC^{4l})$;
            
            \item $\PP^{2p-1} \times \PP^{2(l-p)-1} \hookrightarrow Z_{\text{short}} \subset \PP(C_{l}) \hookrightarrow \PP(A_{2l-1})$, $1 \le p \le l-1$ but $(p,\, l)\not= (1,\,2)$.
            In this case, $(\sfr,\,Z,\,O)$ is from Theorem \ref{main thm: semi simple}(2.b), while $(\tilde{\sfr},\, \tilde{Z}_{\text{long}},\, \iota(O))$ is from Theorem \ref{main thm: adjoint}(1) and Proposition \ref{prop: Legendrian for IHSS} with $S_{\text{ad}}/P = \text{Gr}(2p,\, \CC^{2l-1})$;
            
            \item $\text{OG}(4,\, \CC^{9}) \hookrightarrow Z_{\text{short}} \subset \PP(F_{4}) \hookrightarrow \PP(E_{6})$.
            In this case, $(\sfr,\,Z,\,O)$ is from Theorem \ref{main thm: semi simple}(2.e), while $(\tilde{\sfr},\, \tilde{Z}_{\text{long}},\, \iota(O))$ is from Theorem \ref{main thm: adjoint}(1) and Proposition \ref{prop: Legendrian for IHSS} with $S_{\text{ad}}/P = \mathbb{OP}^{2}$; and
            
            \item $\QQ^{5} \hookrightarrow Z_{[3,\,2^{2}]} \subset \PP(B_{3}) \hookrightarrow \PP(B_{4})$.
            In this case, $(\sfr,\,Z,\,O)$ is from Theorem \ref{main thm: semi simple}(2.g), while $(\tilde{\sfr},\, \tilde{Z}_{\text{long}},\, \iota(O))$ is from Theorem \ref{main thm: adjoint}(1) and Proposition \ref{prop: Legendrian for IHSS} with $S_{\text{ad}}/P = \QQ^{7}$.
        \end{itemize}
    \end{coro}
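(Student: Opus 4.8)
The plan is to combine the full classification with a case-by-case computation of $\Aut(O)^{0}$, and then to build the enlargement by hand in the few cases that survive. Under the hypotheses, Theorems~\ref{main thm: adjoint}--\ref{main thm: semi simple} force the triple $O\hookrightarrow Z\subset\PP(\sfr)$ (with no proper ideal of $\sfr$ carrying $Z$) to be one of the explicitly listed ones, and in each case $O=O_{\mfr}$ is a rational homogeneous space realized as the highest weight orbit of the irreducible $\ofr^{\text{Levi}}$-representation $V=\ofr^{\perp}$. Hence the restriction $\text{Stab}_{S_{\text{ad}}}(O)^{0}\to\Aut(O)^{0}$ is exactly the homomorphism induced by $\ofr^{\text{Levi}}\to\mathrm{Lie}\,\Aut(O)^{0}$, and the problem reduces to deciding, item by item, whether the group through which the stabilizer acts is all of $\Aut(O)^{0}$. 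Writing $O=L/Q$ with $L=\Aut(O)^{0}$, non-surjectivity is the statement that the defining group is a proper subgroup of $L$.

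I would settle this by listing $\Aut(O)^{0}$, a standard datum for rational homogeneous spaces. For every item of Theorem~\ref{main thm: adjoint}, where $Z=Z_{\text{long}}$, the map is onto: the non-symmetric items yield subadjoint varieties and the conormal (flag) varieties of types $A$, $D$, $E$, whose automorphism groups coincide with the defining group, while the remaining symmetric items are exactly those non-Hermitian isotropy orbits with no enhancement of symmetry, the potentially enhanced symmetric pairs having been moved to Theorem~\ref{main thm: semi simple}. In Theorem~\ref{main thm: semi simple}(2) the comparison yields precisely four failures, where $O$ is a projective space, a product of two projective spaces, a spinor variety, or a $5$-dimensional quadric:
\[
\PP^{2l-1}\ (A_{2l-1}\supsetneq C_{l}),\qquad
\PP^{2p-1}\times\PP^{2(l-p)-1}\ (A_{2p-1}\oplus A_{2(l-p)-1}\supsetneq C_{p}\oplus C_{l-p}),
\]
\[
\text{OG}(4,\CC^{9})\cong\SS_{5}\ (D_{5}\supsetneq B_{4}),\qquad
\QQ^{5}\ (B_{3}\supsetneq G_{2}),
\]
which are items (2.a) with $\hfr'=C_{l}$, (2.b), (2.e), (2.g); the exceptional isomorphism $\text{OG}(4,\CC^{9})\cong\SS_{5}$ and the inclusion $G_{2}\subset B_{3}$ are what produce the extra automorphisms. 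The surviving items, namely (2.a) with $\hfr'\neq C_{l}$ (adjoint varieties of non-type-$C$ algebras), (2.c) with $O=\text{IG}(2,\CC^{2l})$, (2.d) with $O=\QQ^{l-2}$, and (2.f) with $O=F_{4}/P_{4}$, all have $\Aut(O)^{0}$ equal to the defining group and are therefore excluded. This isolates exactly the four quadruples in the statement.

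For each surviving case I would construct $\tilde{\sfr}$ and verify (1)--(3). In each case $\sfr$ embeds in the indicated simple $\tilde{\sfr}$ ($C_{2l}$, $A_{2l-1}$, $E_{6}$, $B_{4}$): as a symmetric subalgebra in (2.a), (2.b), (2.e), and as the semisimple part of $\mathfrak{so}(7)\oplus\mathfrak{so}(2)\subset\mathfrak{so}(9)$ in (2.g). In the first three cases the orthogonal decomposition $\tilde{\sfr}=\sfr\oplus\mfr$ comes from an involution $\theta$ with $\sfr=\tilde{\sfr}^{\theta}$, and the orthogonal projection $\tilde{\sfr}\twoheadrightarrow\sfr$ is $\tfrac12(1+\theta)$; I would take $Z^{\text{sc}}$ to be the open $S_{\text{sc}}$-orbit in $\tilde{Z}_{\text{long}}$ of classes projecting to a nonzero nilpotent lying over $Z$, and identify $\varphi\colon Z^{\text{sc}}\to Z$ with the quotient by $\theta$, so that $\varphi$ is an unramified $2$-to-$1$ map. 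Because the contact structure of a nilpotent orbit is cut out by the Killing form (Theorem~\ref{thm: boothby characterization of hom contact mfld}) and $b_{\tilde{\sfr}}|_{\sfr}=b_{\sfr}$, the projection preserves contact structures. For $\iota(O)$ I would apply Proposition~\ref{prop: Legendrian for IHSS} to the listed parabolic $P$ (with $\tilde{S}_{\text{ad}}/P$ equal to $\text{LG}(2l,\CC^{4l})$, $\text{Gr}(2p,\CC^{2l})$, $\mathbb{OP}^{2}$, $\QQ^{7}$, respectively): the associated Legendrian subvariety of $\tilde{Z}_{\text{long}}$ is the highest weight orbit of the abelian radical $\pfr^{u}$, is abstractly isomorphic to $O$, lies in $Z^{\text{sc}}$, and meets every fiber of $\varphi$ once, so $\varphi|_{\iota(O)}$ is an isomorphism and $\varphi\circ\iota=\mathrm{id}$. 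Condition (3) is then automatic, since $\text{Stab}_{\tilde{S}_{\text{ad}}}(\iota(O))^{0}$ contains the Levi $P^{\text{Levi}}$, which acts on $O$ through its full automorphism group $A_{2l-1}$, $A_{2p-1}\oplus A_{2(l-p)-1}$, $D_{5}$, $B_{3}$, respectively.

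The main obstacle I expect is the honest proof of (1): that $\varphi$ is a genuine $2$-to-$1$ \emph{universal} covering, i.e.\ that $\pi_{1}(Z)=\ZZ/2$ and $Z^{\text{sc}}$ is simply connected, together with the assertion that $Z^{\text{sc}}$ is a single $S_{\text{sc}}$-orbit. This requires controlling the fibers of $\tilde{\sfr}\twoheadrightarrow\sfr$ over the cone of minimal nilpotents of $\tilde{\sfr}$ and computing the relevant component and fundamental groups. The non-symmetric case (2.g) will need the most care: there $\sfr=B_{3}$ is only the Levi part of a symmetric subalgebra of $\tilde{\sfr}=B_{4}$, so $\theta$ does not fix $\sfr$ pointwise and the clean ``quotient by $\theta$'' description of $\varphi$ is unavailable; one must instead analyze directly how minimal nilpotents of $\mathfrak{so}(9)$ project into $\mathfrak{so}(7)$.
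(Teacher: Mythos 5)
Your top-level strategy coincides with the paper's: use Theorems \ref{main thm: adjoint}--\ref{main thm: semi simple} to isolate exactly the four triples where $\text{Stab}_{S_{\text{ad}}}(O)^{0} \rightarrow \text{Aut}(O)^{0}$ fails to be surjective (your case analysis, including the exclusion forced by $C_{1}=A_{1}$ when $(p,l)=(1,2)$, matches the paper's), and then realize $\tilde{\sfr}$ as $C_{2l}$, $A_{2l-1}$, $E_{6}$, $B_{4}$ via the natural embeddings. However, there is a genuine gap, and you have flagged it yourself: you never prove condition (1), which is the technical heart of the result. The paper isolates this in a separate lemma (Lemma \ref{coro: fundamental group of Zm}), whose two key ingredients are absent from your proposal. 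First, the fact that the orthogonal projection $\tilde{\sfr} \twoheadrightarrow \sfr$ restricts to a finite $2$-to-$1$ morphism $\overline{\tilde{\Ocal}_{\text{min}}} \twoheadrightarrow \overline{\Ocal}$ over $\Ocal$ is not something that follows from writing the projection as $\tfrac12(1+\theta)$; the paper invokes the shared-orbit-pair result \cite[Proposition 2.12]{FuJuteauLevySommers2023LocalGeometry} for this. Your identification of $\varphi$ with ``the quotient by $\theta$'' is plausible but unproven, and it is precisely what needs justification. Second, the word \emph{universal} requires showing $Z^{\text{sc}}=\varphi^{-1}(Z)$ is simply connected; the paper's argument is that $\overline{Z_{\mfr}}$ is a union of projectivized nilpotent orbits, each of which is a contact manifold and hence odd-dimensional, so the boundary $\overline{Z_{\mfr}} \setminus Z_{\mfr}$ has complex codimension at least $2$, whence $\varphi^{-1}(Z_{\mfr})$ is the complement of a codimension-$\ge 2$ set in the simply connected $\tilde{Z}_{\text{long}}$. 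No analogue of this appears in your proposal. Third, for the case $(B_{3},G_{2})$ you explicitly concede that your method is ``unavailable'' and offer no replacement; the paper handles it uniformly with the other cases by composing the standard embeddings with the triality automorphism, $B_{3} \hookrightarrow D_{4} \xrightarrow{\sigma} D_{4} \hookrightarrow B_{4}$, after which the same shared-orbit-pair argument applies.

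A secondary, repairable weakness: you construct $\iota$ by taking the Legendrian subvariety from Proposition \ref{prop: Legendrian for IHSS} and asserting that it lies in $Z^{\text{sc}}$ and meets each fiber of $\varphi$ once. Neither assertion is verified, and both are nontrivial. The paper proceeds in the opposite direction: since $O$ is a rational homogeneous space it is simply connected, so $\varphi^{-1}(O)$ splits into two components $O_{1}, O_{2}$, each mapped isomorphically onto $O$; equivariance and the contact-preserving property of $\varphi$ then make each $O_{i}$ a homogeneous Legendrian subvariety of $\tilde{Z}_{\text{long}}$, and the identification with the Proposition \ref{prop: Legendrian for IHSS} orbits is an observation made only at the end. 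This also yields condition (3) without your Levi computation: since the new triples live in an adjoint variety and none of the four failure cases has $Z = Z_{\text{long}}$, surjectivity is automatic from the classification. Your direct Levi argument for (3) is fine as an alternative, but the missing proof of (1) is what prevents the proposal from being a proof.
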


To prove Corollary \ref{coro: extend aut}, we need to construct a universal cover of $Z$, which is done in the following Lemma \ref{coro: fundamental group of Zm}.
In the following, recall that $(\gfr,\,\hfr)$ is an isotropy irreducible pair with $\dim \gfr > 1$ and that $Z_{\mfr} \subset \PP(\gfr)$ is a nilpotent orbit containing $O_{\mfr} \subset \PP(\mfr)$, and denote by $G_{\text{sc}}$ the simply connected Lie group associated to $\gfr$.
    
\begin{lemma} \label{coro: fundamental group of Zm}
    If $Z_{\mfr}$ is not simply connected, then $(\gfr,\,\hfr)$ is one of ($C_{l} \oplus C_{l}$, diag($C_{l}$)) ($l\ge 1$), ($C_{l}$, $C_{p} \oplus C_{l-p}$) ($1 \le p \le l-1$), ($\mathfrak{so}(l)$, $\mathfrak{so}(l-1)$) ($l \ge 5$), ($F_{4}$, $B_{4}$) (symmetric), and ($B_{3}$, $G_{2}$) (non-symmetric).
    In this case, $\pi_{1}(Z_{\mfr}) = \ZZ/ 2 \ZZ$, and its universal cover $\varphi:Z^{\text{sc}}_{\mfr} \rightarrow Z_{\mfr}$ can be constructed as follows:
    $\gfr$ can be embedded into a simple Lie algebra $\tilde{\gfr}$ so that the adjoint variety $\tilde{Z}_{\text{long}} \subset \PP(\tilde{\gfr})$ contains an open $G_{\text{sc}}$-orbit $Z_{\mfr}^{\text{sc}}$, the covering $\varphi: Z^{\text{sc}}_{\mfr} \rightarrow Z_{\mfr}$ is the restriction of the orthogonal projection $\tilde{\gfr} \twoheadrightarrow \gfr$, and $d \varphi (\tilde{D}|_{Z_{\mfr}^{\text{sc}}}) = D$ where $\tilde{D}$ and $D$ are the contact structures of $\tilde{Z}_{\text{long}}$ and $Z_{\mfr}$, respectively.
\end{lemma}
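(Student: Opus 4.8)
The plan is to combine a reduction step with an explicit construction of the covering. First I would cut down the list of candidates. By Theorem \ref{thm: classification of Om and Zm of iso irre var}, when $\gfr$ is simple the orbit $Z_{\mfr}$ is the adjoint variety $Z_{\text{long}}$ except for the six pairs $(A_{2l-1}, C_l)$, $(C_l, C_p \oplus C_{l-p})$, $(\mathfrak{so}(l), \mathfrak{so}(l-1))$, $(F_4, B_4)$, $(E_6, F_4)$, $(B_3, G_2)$; and an adjoint variety, being a projective rational homogeneous space $G_{\text{ad}}/P$, is simply connected. Hence $Z_{\mfr}$ can fail to be simply connected only for one of these six pairs or when $\gfr = \hfr' \oplus \hfr'$ is not simple, which reduces the proof to finitely many families.

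For the pairs that turn out to be genuinely non-simply connected I would exhibit the universal cover uniformly. In each case $\gfr$ sits inside a simple $\tilde{\gfr}$ — namely $C_{2l}$, $A_{2l-1}$, $\mathfrak{so}(l+1)$, $E_6$ and $B_4$ for the non-simple $C_l$ pair, $(C_l, C_p\oplus C_{l-p})$, $(\mathfrak{so}(l),\mathfrak{so}(l-1))$, $(F_4, B_4)$ and $(B_3, G_2)$ respectively — so that $\tilde{\gfr} = \gfr \oplus \gfr^\perp$ as $\gfr$-modules and $\gfr$ is the fixed subalgebra of the symmetric involution, except for $(B_3, G_2)$, where the embedding $\mathfrak{so}(7) \hookrightarrow \mathfrak{so}(9)$ is the exotic spin embedding underlying $G_2 \subset \mathfrak{so}(7)$. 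I would then take $Z^{\text{sc}}_{\mfr}$ to be the dense open $G_{\text{sc}}$-orbit of a highest root vector of $\tilde{\gfr}$ inside the adjoint variety $\tilde{Z}_{\text{long}} \subset \PP(\tilde{\gfr})$, and show that the orthogonal projection $\varphi : \tilde{\gfr} \twoheadrightarrow \gfr$ restricts to a finite $G_{\text{sc}}$-equivariant map $\varphi : Z^{\text{sc}}_{\mfr} \to Z_{\mfr}$ of degree $2$. In the symmetric cases the deck involution is $\theta$ itself, since $\varphi = \tfrac{1}{2}(\mathrm{id} + \theta)$ identifies $[\tilde v]$ with $[\theta \tilde v]$ and with nothing else; concretely, for the non-simple $C_l$ pair one writes a highest root vector of $C_{2l}$ as a rank-one symmetric operator attached to $u = (u_1, u_2) \in \CC^{2l} \oplus \CC^{2l}$ and checks that the fibre over its image is exactly $\{[(u_1, u_2)],\, [(u_1, -u_2)]\}$, while $(B_3, G_2)$ is handled by the explicit matrix computation already carried out in the proof of Proposition \ref{prop: rho is a highest root sp}.

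To conclude that $\varphi$ is the universal covering and that $\pi_1(Z_{\mfr}) \cong \ZZ/2\ZZ$, I would show that $Z^{\text{sc}}_{\mfr}$ is simply connected. Since $\tilde{Z}_{\text{long}}$ is simply connected and $Z^{\text{sc}}_{\mfr}$ is its dense open $G_{\text{sc}}$-orbit, it suffices to verify that the complement $\tilde{Z}_{\text{long}} \setminus Z^{\text{sc}}_{\mfr}$ is a union of $G_{\text{sc}}$-orbits of complex codimension at least $2$, so that excising it leaves $\pi_1$ unchanged; for the non-simple $C_l$ pair this complement is the two linear subspaces $\{u_1 = 0\}$ and $\{u_2 = 0\}$, each of codimension $2l$. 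A degree-$2$ covering with simply connected total space is the universal cover, giving $\pi_1(Z_{\mfr}) \cong \ZZ/2\ZZ$ and the stated description. The compatibility $d\varphi(\tilde{D}|_{Z^{\text{sc}}_{\mfr}}) = D$ of contact structures then follows from $b_{\tilde{\gfr}}|_{\gfr} = b_{\gfr}$ together with the formula $D_{[v]} = v^\perp / \nfr_{\gfr}(v)$ of Theorem \ref{thm: boothby characterization of hom contact mfld}, which is visibly respected by the orthogonal projection.

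It remains to rule out the two excluded pairs $(A_{2l-1}, C_l)$, $(E_6, F_4)$ and the non-simple pairs with $\hfr'$ not of type $C$, by showing these give simply connected $Z_{\mfr}$. Here I would use the homotopy sequence of the $\CC^\times$-bundle $\Ocal \to Z_{\mfr}$, where $\Ocal \subset \gfr$ is the affine orbit, which exhibits $\pi_1(Z_{\mfr})$ as a quotient of $\pi_1(\Ocal) = \pi_0(Z_{G_{\text{sc}}}(e))$; so it is enough that this component group vanish. For $(A_{2l-1}, C_l)$ the orbit lies in $\mathfrak{sl}(2l)$, where centralizers of nilpotents are connected; for the non-simple pairs one has $\pi_1(\Ocal_{\min}) = 1$ unless $\hfr'$ is of type $C$; and for $(E_6, F_4)$ the component group of the $2A_1$-orbit in the simply connected $E_6$ is trivial. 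I expect the principal obstacle to be the simple-connectivity step — checking uniformly that the boundary of the open orbit $Z^{\text{sc}}_{\mfr}$ in $\tilde{Z}_{\text{long}}$ has codimension at least two, and in particular contains no divisorial component — and, relatedly, the $(B_3, G_2)$ case, where no involution of $\tilde{\gfr} = B_4$ cuts out $\gfr$ and the deck transformation together with the degree-$2$ count must instead be extracted from the triality picture of $D_4$ that underlies the embedding $G_2 \subset \mathfrak{so}(7) \subset \mathfrak{so}(9)$.
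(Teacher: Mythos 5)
Your strategy is the same as the paper's: reduce via Theorem \ref{thm: classification of Om and Zm of iso irre var} and simple connectedness of adjoint varieties, rule out the remaining orbits by fundamental-group computations for nilpotent orbits, and for the five listed families build the double cover by restricting the orthogonal projection $\tilde{\gfr}\twoheadrightarrow\gfr$ to $\tilde{Z}_{\text{long}}$, with exactly the same choices of $\tilde{\gfr}$. However, there are two genuine gaps.

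First, the case $l=2$ of $(A_{2l-1},\,C_{l})$ is wrong as written. Centralizers of nilpotent elements are connected in $GL(2l)$ but not in $SL(2l)$: for Jordan type $[d_{1},\ldots,d_{k}]$ the component group in $SL$ is cyclic of order $\gcd(d_{i})$, so for $[2^{2}]$ in $\mathfrak{sl}(4)$ it is $\ZZ/2\ZZ$. Hence your argument does not show that $Z_{[2^{2}]}\subset\PP(\mathfrak{sl}(4))$ is simply connected, and it cannot be: $(A_{3},\,C_{2})\cong(\mathfrak{so}(6),\,\mathfrak{so}(5))$ is one of the pairs in your own non-simply-connected list, so your ruling-out step contradicts your main construction at $l=2$. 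The paper avoids this by invoking connectedness (via \cite[Corollary 6.1.6]{CollingwoodMcGovern1993NilpotentOrbits}) only for $l\ge3$, where the parts of the partition have $\gcd$ equal to $1$, and disposing of $l=2$ through the coincidence with $(\mathfrak{so}(6),\,\mathfrak{so}(5))$.

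Second, the degree-$2$ finiteness of $\varphi$ is asserted rather than proved outside the $C_{l}\oplus C_{l}$ case. That the orthogonal projection maps $\overline{\tilde{\Ocal}_{\text{min}}}$ onto $\overline{\Ocal}$ by a finite morphism which is $2$-to-$1$ over $\Ocal$ (and, in particular, that a generic point of $\tilde{\Ocal}_{\text{min}}$ projects into the correct orbit $\Ocal$) is the key nontrivial input. For the symmetric pairs your claim that the fiber is exactly $\{[\tilde{v}],\,[\theta\tilde{v}]\}$ needs proof, and for $(B_{3},\,G_{2})$ the computation in Proposition \ref{prop: rho is a highest root sp} only identifies which orbit contains $O_{\mfr}$; it says nothing about the image or the fibers of $\tilde{\Ocal}_{\text{min}}$ under the projection, and you concede this case is open. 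The paper obtains all of this uniformly, including $(B_{3},\,G_{2})$, from \cite[Proposition 2.12]{FuJuteauLevySommers2023LocalGeometry}, i.e.\ from the Brylinski--Kostant theory of shared orbit pairs; without that citation or a complete case-by-case verification, your construction is incomplete precisely where you flag it. Two further points you raise are easily repaired: the codimension-$\ge 2$ step you call the principal obstacle has a uniform one-line proof (the boundary $\overline{Z_{\mfr}}\setminus Z_{\mfr}$ is a union of projectivized nilpotent orbits, each a contact manifold and hence odd-dimensional, so each has codimension at least $2$; then pull back along the finite map $\varphi$), and the identity $b_{\tilde{\gfr}}|_{\gfr}=b_{\gfr}$ is false --- one only has $b_{\tilde{\gfr}}|_{\gfr}=c\cdot b_{\gfr}$ with $c\in\CC^{\times}$, by uniqueness of invariant forms for simple $\gfr$, together with a check of equal constants on the two factors when $\gfr=C_{l}\oplus C_{l}$ --- but proportionality suffices for the contact-structure comparison.
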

\begin{proof}
    If $Z_{\mfr} = Z_{\text{long}}$, then it is a rational homogeneous space, and hence simply connected.
    By Proposition \ref{prop:Om when g is not simple} and Theorem \ref{thm: classification of Om and Zm of iso irre var}, it suffices to consider the following cases:
    \begin{itemize}
        \item ($\hfr' \oplus \hfr'$, diag($\hfr'$)) for a simple Lie algebra $\hfr'$:
        In this case, $Z_{\mfr} = \PP(\Ocal_{\text{min}} \oplus \Ocal_{\text{min}})$.
        If $\hfr'$ is not of type $C$, then $\Ocal_{\text{min}}$ is simply connected by \cite[Corollary 6.1.6 and \S 8.4]{CollingwoodMcGovern1993NilpotentOrbits}, and so is $Z_{\mfr}$.
        The case of type $C$ is considered below.
    
        \item ($A_{2l-1}$, $C_{l}$) ($l \ge 2$):
        In this case, $Z_{\mfr} = Z_{[2^{2},\, 1^{2l-4}]}$.
        If $l \ge 3$, then it is simply connected by \cite[Corollary 6.1.6]{CollingwoodMcGovern1993NilpotentOrbits}.
        If $l=2$, this pair coincides with ($D_{3}$, $B_{2}$) = ($\mathfrak{so}(6)$, $\mathfrak{so}(5)$), which is considered below.
        
        \item ($E_{6}$, $F_{4}$):
        In this case, $Z_{\mfr} = Z_{2A_{1}}$, which is simply connected by \cite[\S 8.4]{CollingwoodMcGovern1993NilpotentOrbits}.
        
        \item ($C_{l} \oplus C_{l}$, diag($C_{l}$)) ($l\ge 1$), ($C_{l}$, $C_{p} \oplus C_{l-p}$) ($1 \le p \le l-1$), ($\mathfrak{so}(l)$, $\mathfrak{so}(l-1)$) ($l \ge 5$), ($F_{4}$, $B_{4}$), and ($B_{3}$, $G_{2}$):
        In these cases, $Z_{\mfr}$ is $\PP(\Ocal_{\text{min}} \oplus \Ocal_{\text{min}})$, $Z_{\text{short}} (= Z_{[2^{2},\, 1^{2l-4}]})$, $Z_{[3,\, 1^{l-3}]}$, $Z_{\text{short}} (= Z_{\tilde{A}_{1}})$ and $Z_{[3,\,2^{2}]}$, respectively.
        Consider a simple Lie algebra
        \[
            \tilde{\gfr} := \left\{ \begin{array}{ll}
                C_{2l} & \text{if } (\gfr,\,\hfr) = (C_{l} \oplus C_{l},\, \text{diag}(C_{l})) \ (l \ge 1), \\
                A_{2l-1} & \text{if } (\gfr,\,\hfr) = (C_{l},\, C_{p} \oplus C_{l-p}) \ (1 \le p \le l-1), \\
                \mathfrak{so}(l+1) &  \text{if } (\gfr,\,\hfr) = (\mathfrak{so}(l),\, \mathfrak{so}(l-1)) \ (l \ge 5), \\
                E_{6} & \text{if } (\gfr,\,\hfr) = (F_{4},\, B_{4}),\\
                B_{4} & \text{if } (\gfr,\,\hfr) = (B_{3},\, G_{2}).
            \end{array} \right.
        \]
        Consider the embedding $\gfr \hookrightarrow \tilde{\gfr}$ given as follows:
        \begin{itemize}
            \item If $(\gfr,\,\hfr) = (C_{l} \oplus C_{l},\, \text{diag}(C_{l}))$, then $\gfr = \mathfrak{sp}(V) \oplus \mathfrak{sp}(V) \hookrightarrow \tilde{\gfr} = \mathfrak{sp}(V\oplus V)$ is the standard embedding where $V$ is a symplectic vector space of dimension $2l$.
            \item If $(\gfr,\,\hfr) = (\mathfrak{so}(l),\, \mathfrak{so}(l-1))$, then $\gfr \hookrightarrow \tilde{\gfr} = \mathfrak{so}(l+1)$ is the standard embedding.
            More precisely, the matrix algebra $\mathfrak{so}(l)$ of skew-symmetric $l \times l$ matrices is identified with the subalgebra of $\mathfrak{so}(l+1)$ consisting of skew-symmetric $(l+1) \times (l+1)$ matrices whose $(l+1)$-th rows and $(l+1)$-th columns are zero.
            \item If $(\gfr,\,\hfr) = (B_{3},\, G_{2})$, consider the composition $\gfr = B_{3} \hookrightarrow D_{4} \xrightarrow{\sigma} D_{4} \hookrightarrow \tilde{\gfr} = B_{4}$ of the standard embeddings $B_{3} \hookrightarrow D_{4}$ and $D_{4} \hookrightarrow B_{4}$, and the triality $\sigma$ ($=\sigma_{3}$ in the proof of Proposition \ref{prop: rho is a highest root sp}).
            \item In other cases, consider the embedding $\gfr \hookrightarrow \tilde{\gfr}$ as the fixed-point-locus of the involution of $\tilde{\gfr}$ induced by the diagram involution (see the proof of Proposition \ref{prop: highest weight orbits for symm iso irre pair}).
        \end{itemize}
        If $\Ocal \subset \gfr$ is the nilpotent orbit such that $\PP \Ocal = Z_{\mfr}$ and $\tilde{\Ocal}_{\text{min}}$ is the minimal nilpotent orbit in $\tilde{\gfr}$, then by \cite[Proposition 2.12]{FuJuteauLevySommers2023LocalGeometry}, the orthogonal projection $\tilde{\gfr} \twoheadrightarrow \gfr$ induces a finite $G_{\text{ad}}$-equivariant morphism $\overline{\tilde{\Ocal}_{\text{min}}} \twoheadrightarrow \overline{\Ocal}$ between the closures, which is 2-to-1 over $\Ocal$.
        This morphism induces a finite $G_{\text{sc}}$-equivariant morphism
        \[
            \varphi: \tilde{Z}_{\text{long}} (=\PP(\tilde{\Ocal}_{\text{min}})\subset \PP(\tilde{\gfr})) \twoheadrightarrow \overline{Z_{\mfr}} (= (\overline{\Ocal} \setminus \{0\} / \CC^{\times})\subset \PP(\gfr)),
        \]
        which is 2-to-1 over $Z_{\mfr}$.
        Furthermore, by the $G_{\text{sc}}$-equivariance, $\varphi^{-1}(Z_{\mfr})$ is an open $G_{\text{sc}}$-orbit in $\tilde{Z}_{\text{long}}$.
        Since nilpotent orbits are contact manifolds and $\overline{Z_{\mfr}}$ is a union of nilpotent orbits, the complement of $Z_{\mfr}$ in $\overline{Z_{\mfr}}$ is of (complex) codimension at least 2.
        Thus $\tilde{Z}_{\text{long}} \setminus \varphi^{-1}(Z_{\mfr})$ is of codimension at least 2 in $\tilde{Z}_{\text{long}}$, and hence $\varphi^{-1}(Z_{\mfr})$ is simply connected.
        It means that the restriction $Z^{\text{sc}}_{\mfr}:=\varphi^{-1}(Z_{\mfr}) \rightarrow Z_{\mfr}$ of $\varphi$ is a universal cover, and hence $\pi_{1}(Z_{\mfr}) = \ZZ / 2\ZZ$.

        To show the last property, choose points $[v] \in Z_{\mfr}$ and $[\tilde{v}] \in \varphi^{-1}([v])$.
        Up to scalar multiplication, we may choose $v$ and $\tilde{v}$ so that $\tilde{v} - v \in \gfr^{\perp}$, i.e., $b_{\tilde{\gfr}}(\gfr,\, \tilde{v} - v) = 0$.
        Recall that at the points $[\tilde{v}]$ and $[v]$, the contact structures $\tilde{D}$ and $D$ are given as
        \begin{align*}
            \tilde{D}_{[\tilde{v}]} &\simeq \{w \in \tilde{\gfr}: b_{\tilde{\gfr}}(w,\, \tilde{v}) = 0\} / \nfr_{\tilde{\gfr}}(\tilde{v}), \\
            D_{[v]} &\simeq \{w \in \gfr: b_{\gfr}(w,\, v) = 0\} / \nfr_{\gfr}(v),
        \end{align*}
        respectively.
        Also recall that if $\gfr$ is simple, then there exists a unique $G_{\text{sc}}$-invariant non-degenerate bilinear form up to scalar multiplication.
        Thus if $\gfr$ is simple, then $b_{\tilde{\gfr}}|_{\gfr} = c \cdot b_{\gfr}$ for some $c \in \CC^{\times}$.
        Moreover, if $\gfr = C_{l} \oplus C_{l} (= \mathfrak{sp}(V) \oplus \mathfrak{sp}(V))$ and $\tilde{\gfr} = C_{2l} (=\mathfrak{sp}(V\oplus V))$, then since the embedding $\gfr \hookrightarrow \tilde{\gfr}$ is the standard one, the restriction of $b_{\tilde{\gfr}}$ on each simple factor of $\gfr$ is of form $c' \cdot b_{C_{l}}$ for the same constant $c' \in \CC^{\times}$.
        Therefore in any case, we have $b_{\tilde{\gfr}}|_{\gfr} = c \cdot b_{\gfr}$, $c \in \CC^{\times}$.
        Finally, since $Z_{\mfr}^{\text{sc}}$ is an open $G_{\text{sc}}$-orbit in $\tilde{Z}_{\text{long}}$, every tangent vector of $\tilde{Z}_{\text{long}}$ at $[\tilde{v}]$ is represented by $w \in \gfr$, and it is an element of $\tilde{D}_{[\tilde{v}]}$ if and only if
        \[
            0 =b_{\tilde{\gfr}}(w,\, \tilde{v}) = b_{\tilde{\gfr}}(w,\, v) = c \cdot b_{\gfr}(w,\, v).
        \]
        Since $d_{[\tilde{v}]} \varphi (w \mod \nfr_{\tilde{\gfr}}(\tilde{v})) = w \mod \nfr_{\gfr}(v)$, we conclude that $d\varphi(\tilde{D}_{[\tilde{v}]}) = D_{[v]}$.
        Now the statement follows from the $G_{\text{sc}}$-equivariance.
    \end{itemize}
\end{proof}

    \begin{proof}[Proof of Corollary \ref{coro: extend aut}]
        It is a direct consequence of Theorems \ref{main thm: adjoint}--\ref{main thm: semi simple} that the triple $(\sfr,\,Z,\,O)$ satisfying the conditions in the statement is one of
        \begin{itemize}
            \item $(C_{l}\oplus C_{l},\, \PP(\Ocal_{\text{min}} \oplus \Ocal_{\text{min}}),\, \PP^{2l-1})$, $l \ge 2$ (Theorem \ref{main thm: semi simple}(2.a) with $\hfr' = C_{l}$).
            In this case, $\text{Stab}_{S_{\text{ad}}}(O)^{0}$ is of type $C_{l}$ while $\text{Aut}(\PP^{2l-1})^{0}$ is of type $A_{2l-1}$;
            
            \item $(C_{l},\, Z_{\text{short}},\, \PP^{2p-1} \times \PP^{2(l-p)-1})$, $1 \le p \le l-1$ but $(p,\,l)\not=(1,\,2)$ (Theorem \ref{main thm: semi simple}(2.b)).
            In this case, $\text{Stab}_{S_{\text{ad}}}(O)^{0}$ is of type $C_{p} \oplus C_{l-p}$ while $\text{Aut}(\PP^{2l-1})^{0}$ is of type $A_{2p-1} \oplus A_{2(l-p)-1}$;

            \item $(F_{4},\, Z_{\text{short}},\, \text{OG}(4,\, \CC^{9}))$ (Theorem \ref{main thm: semi simple}(2.e)).
            In this case, $\text{Stab}_{S_{\text{ad}}}(O)^{0}$ is of type $B_{4}$ while $\text{Aut}(\text{OG}(4,\, \CC^{9}))^{0}$ is of type $D_{5}$;

            \item $(B_{3},\, Z_{[3,\,2^{2}]},\, \QQ^{5})$ (Theorem \ref{main thm: semi simple}(2.g)).
            In this case, $\text{Stab}_{S_{\text{ad}}}(O)^{0}$ is of type $G_{2}$ while $\text{Aut}(\QQ^{5})^{0}$ is of type $B_{3}$;
        \end{itemize}

        Namely, these are defined by isotropy irreducible pairs $(\gfr,\,\hfr)$ such that $Z_{\mfr}$ is not simply connected, as in Lemma \ref{coro: fundamental group of Zm}.
        (More precisely, we take $(\gfr,\,\hfr)$ as $(C_{l}\oplus C_{l},\, \text{diag}(C_{l}))$, $(C_{l},\, C_{p} \oplus C_{l-p})$, $(F_{4},\, B_{4})$ and $(B_{3},\,G_{2})$, respectively.)
        As in the proof of Lemma \ref{coro: fundamental group of Zm}, we put $\tilde{\sfr}$ as $C_{2l}$, $A_{2l-1}$, $E_{6}$ and $B_{4}$, respectively, so that $\tilde{Z}_{\text{long}}$ contains an open $S_{\text{sc}}$-orbit $Z^{\text{sc}}$ equipped with a double cover $\varphi : Z^{\text{sc}} \rightarrow Z$ induced by the orthogonal projection $\tilde{\sfr} \twoheadrightarrow \sfr$.
        Since $O$ is simply connected (being a rational homogeneous space) and $\varphi$ is $S_{\text{sc}}$-equivariant, $\varphi^{-1}(O)$ consists of two connected components $O_{1}$ and $O_{2}$, and $\varphi$ maps each $O_{i}$ isomophically onto $O$.
        Moreover, since $\varphi$ preserves the contact structures and it is $S_{\text{sc}}$-equivariant, each $O_{i}$ is a Legendrian subvariety of $\tilde{Z}_{\text{long}}$ that is $\text{Stab}_{\tilde{S}_{\text{ad}}}(O_{i})$-homogeneous.
        It follows that the restriction $\text{Stab}_{\tilde{S}_{\text{ad}}}(O_{i})^{0} \rightarrow \text{Aut}(O_{i})^{0}$ is surjective, since none of the above triples $(\sfr,\,Z,\,O)$ satisfies $Z = Z_{\text{long}}$.
        Finally, observe that the new triples $(\tilde{\sfr},\, \tilde{Z}_{\text{long}},\, O_{i})$ can be obtained by Theorem \ref{main thm: adjoint} and Proposition \ref{prop: Legendrian for IHSS} with the irreducible Hermitian symmetric spaces indicated in the statement.
    \end{proof}

\section{Tables} \label{section: Tables}

In this section, four tables are given.
In Table \ref{table: classification of g h} (obtained in Theorem \ref{thm: classification of isotropy irre pair} and Proposition \ref{prop: rho is a highest root sp}), we recall the classification of non-symmetric $(\gfr,\, \hfr)$, together with dimension of $O_{\mfr}$ and $Z_{\mfr}$.
Namely, $O_{\mfr}$ corresponding to a row marked as {\lq Yes\rq} in the last column is a Legendrian subvariety in Theorems \ref{main thm: adjoint}(2) and \ref{main thm: semi simple}(2.g).
In Table \ref{table: isotropy irreducible pairs of equal rank}--\ref{table: Legendrian asso to IHSS} (obtained in Propositions \ref{prop: Legendrian for IHSS}, \ref{prop:Om when g is not simple}, \ref{prop: highest weight orbits for symm iso irre pair}), we collect the well-known classification of symmetric pairs and irreducible Hermitian symmetric spaces, together with Legendrian subvarieties in Theorems \ref{main thm: adjoint}(1) and \ref{main thm: semi simple}(2).

In the tables, we keep the notation of the previous sections. Namely, for a given reductive Lie algebra and its simple factor $\hfr_{1}$, we denote by $\alpha_{i}^{\hfr_{1}}$, $\pi_{i}^{\hfr_{1}}$, $\delta^{\hfr_{1}}$, and $\delta_{\text{short}}^{\hfr_{1}}$ a simple root, a fundamental weight, the highest root of $\hfr_{1}$, and the dominant short root, respectively.
We drop the superscript $\hfr_{1}$ if the given reductive Lie algebra is indeed simple.
For the indexing of the Dynkin diagrams, we follow the notation of \cite{OnishchikVinberg1990LieGroups}.
In our notation, $\delta$ and $\delta_{\text{short}}$ are given as follows:
\begin{itemize}
    \item $A_{r}$ ($r \ge 1$): $\delta = \alpha_{1} + \cdots + \alpha_{r}$ for the indexing
    \[
        {\dynkin[labels={\alpha_{1},\alpha_{2},\alpha_{r-1},\alpha_{r}}, edge length=.75cm] A{}}.
    \]
    \item $B_{r}$ ($r \ge 2$): $\delta = \alpha_{1} + 2 \alpha_{2} + \cdots + 2\alpha_{r}$ and $\delta_{\text{short}} = \alpha_{1} + \cdots + \alpha_{r}$ for the indexing
    \[
        {\dynkin[labels={\alpha_{1},\alpha_{2},\alpha_{r-2},\alpha_{r-1},\alpha_{r}}, edge length=.75cm] B{}}.
    \]
    \item $C_{r}$ ($r \ge 2$): $\delta = 2\alpha_{1} + \cdots + 2 \alpha_{r-1} + \alpha_{r}$ and $\delta_{\text{short}} = \alpha_{1} + 2 \alpha_{2} + \cdots + 2 \alpha_{r-1} + \alpha_{r}$ for the indexing
    \[
        {\dynkin[labels={\alpha_{1},\alpha_{2},\alpha_{r-2},\alpha_{r-1},\alpha_{r}}, edge length=.75cm] C{}}.
    \]
    \item $D_{r}$ ($r \ge 3$): $\delta = \alpha_{1} + 2 \alpha_{2} + \cdots + 2 \alpha_{r-2} + \alpha_{r-1} + \alpha_{r}$ for the indexing
    \[
        {\dynkin[labels={1, 2, r-3, r-2, r-1, r},label macro/.code={\alpha_{\drlap{#1}}},
        edge length=.75cm]D{}}.
    \]

    \item $G_{2}$: $\delta = 3\alpha_{1} + 2\alpha_{2}$ and $\delta_{\text{short}} = 2 \alpha_{1} + \alpha_{2}$ for the indexing
    \[
        {\dynkin[labels={1,2},label macro/.code={\alpha_{\drlap{#1}}},
        edge length=.75cm]G2}.
    \]

    \item $F_{4}$: $\delta = 2\alpha_{1} + 4\alpha_{2} + 3\alpha_{3} + 2\alpha_{4}$ and $\delta_{\text{short}}= 2 \alpha_{1} + 3 \alpha_{2} + 2 \alpha_{3} + \alpha_{4}$ for the indexing
    \[
        {\dynkin[labels={4,3,2,1},label macro/.code={\alpha_{\drlap{#1}}},
        edge length=.75cm, backwards]F4}.
    \]
    
    \item $E_{6}$: $\delta =  \alpha_{1} + 2 \alpha_{2} + 3 \alpha_{3} + 2 \alpha_{4} + \alpha_{5} + 2 \alpha_{6}$ for the indexing
    \[
        {\dynkin[labels={1, 6, 2, 3, 4, 5},label macro/.code={\alpha_{\drlap{#1}}},
        edge length=.75cm, upside down]E6}.
    \]
    \item $E_{7}$: $\delta = \alpha_{1} + 2 \alpha_{2} + 3 \alpha_{3} + 4 \alpha_{4} + 3 \alpha_{5} + 2 \alpha_{6} + 2 \alpha_{7}$ for the indexing
    \[
        {\dynkin[labels={6,7,5,4,3,2,1},label macro/.code={\alpha_{\drlap{#1}}},
        edge length=.75cm, backwards, upside down]E7}.
    \]
    \item $E_{8}$: $\delta = 2\alpha_{1} + 3\alpha_{2} + 4\alpha_{3} + 5\alpha_{4} + 6\alpha_{5} + 4\alpha_{6} + 2 \alpha_{7} + 3 \alpha_{8}$ for the indexing
    \[
        {\dynkin[labels={7,8,6,5,4,3,2,1},label macro/.code={\alpha_{\drlap{#1}}},
        edge length=.75cm, backwards, upside down]E8}.
    \]
    
\end{itemize}

\begin{rmk} \label{rmk:notation tables}
    \begin{enumerate}
    \item $D_{1} := \mathfrak{so}(2)$ is a 1-dimensional reductive Lie algebra.
    \item In the isomorphism $D_{2} := \mathfrak{so}(4) \simeq A_{1} \oplus A_{1}$, the simple factors are written as $A_{1}'$ and $A_{1}''$.
    \item In the tables of \cite[\S I.11]{Wolf1968GeometryStructure} and \cite{Wolf1984CorrectionGeometry}, our non-symmetric isotropy irreducible pairs $(\gfr,\hfr)$ are corresponding to (the complexifications of) the rows whose isotropy representations are absolutely irreducible, i.e., the rows with connected diagrams in the column $\chi$.
    In the same tables, the embedding of $\hfr$ into $\gfr$ is also described, in the column $\pi$.
\end{enumerate}
\end{rmk}

\begin{table}[p]
    \centering
    \hspace*{-.2\textwidth} 
    \resizebox{1.3\textwidth}{!}{%
    \begin{tabular}{|c||c|c|c|c|c|}
    \hline
        No. & ($\gfr$, $\hfr$) & Highest weight $\rho$ of $\mfr$ & $\dim O_{\mfr}$ & $\dim Z_{\mfr}$ & Legendrian? \\ \hline \hline
        1$_{p,\,q}$ ($p \ge q \ge 2$, $pq > 4$) & ($A_{pq-1}$, $A_{p-1} \oplus A_{q-1}$) & $\pi_{1}^{A_{p-1}} + \pi_{p-1}^{A_{p-1}} + \pi_{1}^{A_{q-1}} + \pi_{q-1}^{A_{q-1}} = \delta^{A_{p-1}} + \delta^{A_{q-1}}$ & $2p+2q-6$ & $2pq-3$ & Yes if $q=2$ \\ 
        2 & ($A_{15}$, $D_{5}$) & $\pi_{4} + \pi_{5} = \delta + \alpha_{3} + \alpha_{4} + \alpha_{5}$ & 14 & 29 & Yes \\ 
        3 & ($A_{26}$, $E_{6}$)& $\pi_{1} + \pi_{5} = \delta + \alpha_{1} + \alpha_{2} + \alpha_{3} + \alpha_{4} + \alpha_{5}$ & 24 & 51 & \\ 
        4$_{n}$ ($n \ge 5$) & ($A_{n(n-1)/2-1}$, $A_{n-1}$) & $\pi_{2}+\pi_{n-2} = \delta + \alpha_{2} + \cdots + \alpha_{n-2}$ & $4n-12$ & $n^{2}-n-3$ & Yes if $n=5$ \\ 
        5$_{n}$ ($n \ge 3$) & ($A_{n(n+1)/2-1}$, $A_{n-1}$) & $2 \pi_{1} + 2 \pi_{n-1} = 2 \delta$ & $2n-3$ & $n^{2}+n-3$ & \\ 
        6 & ($C_{2}$, $A_{1}$) & $6 \pi_{1} = 3 \delta$ & 1 & 3 & Yes \\ 
        7 & ($C_{7}$, $C_{3}$) & $2 \pi_{3} = \delta + 2 \alpha_{2} + 2 \alpha_{3}$ & 6 & 13 & Yes \\ 
        8 & ($C_{10}$, $A_{5}$) & $2 \pi_{3}= \delta + \alpha_{2}+2\alpha_{3}+\alpha_{4}$ & 9 & 19 & Yes \\ 
        9 & ($C_{16}$, $D_{6}$) & $2 \pi_{5} = \delta + \alpha_{3} + 2 \alpha_{4}+2 \alpha_{5} + \alpha_{6}$ & 15 & 31 & Yes \\ 
        10 & ($C_{28}$, $E_{7}$)& $2 \pi_{1} = \delta + 2 \alpha_{1} + 2 \alpha_{2} + 2 \alpha_{3} + 2 \alpha_{4} + \alpha_{5} + \alpha_{7}$ & 27 & 55 & Yes \\ 
        11$_{n}$ ($n \ge 3$) & ($C_{n}$, $A_{1} \oplus \mathfrak{so}(n)$)  & (if $n=3$) $2 \pi_{1}^{A_{1}} + 4 \pi_{1}^{\mathfrak{so}(3)} = \delta^{A_{1}} + 2 \delta^{\mathfrak{so}(3)}$ &$n-1$ & $2n-1$ & Yes \\ 
        && (if $n=4$) $2 \pi_{1}^{A_{1}} + 2 \pi_{1}^{A_{1}'} + 2 \pi_{1}^{A_{1}''} = \delta^{A_{1}} + \delta^{A_{1}'} + \delta^{A_{1}''}$ & && \\
        && (if $n \ge 5$) $2 \pi_{1}^{A_{1}} + 2 \pi_{1}^{\mathfrak{so}(n)} = \delta^{A_{1}} + \delta^{\mathfrak{so}(n)} + \alpha_{1}^{\mathfrak{so}(n)}$ &&&\\ 
        12 & ($D_{10}$, $A_{3}$) & $\pi_{1} + 2 \pi_{2} + \pi_{3}=2 \delta + \alpha_{2}$ &6 & 33 & \\ 
        13& ($D_{35}$, $A_{7}$)& $\pi_{3} + \pi_{5}=\delta + \alpha_{2} + 2 \alpha_{3} + 2 \alpha_{4} + 2 \alpha_{5} + \alpha_{6}$ &21 &133 & \\ 
        14 & ($D_{8}$, $B_{4}$)& $\pi_{3}=\delta + \alpha_{3} + \alpha_{4}$ & 12 & 25 & Yes \\ 
        15$_{n}$ ($n \ge 2$) & ($\mathfrak{so}(2n^{2}+n)$, $B_{n}$) & (if $n=2$) $\pi_{1}+2 \pi_{2} = 2\delta - \alpha_{2}$ & (if $n=2$) 4& $4n^{2}+2n-7$ & \\
        && (if $n=3$) $\pi_{1} + 2 \pi_{3} = 2\delta - \alpha_{2}$ & (if $n \ge 3$) $6n-10$& & \\ 
        && (if $n\ge 4$) $\pi_{1} + \pi_{3} = 2\delta - \alpha_{2}$ & && \\ 
        16$_{n}$ ($n \ge 2$) & ($\mathfrak{so}(2n^{2}+3n)$, $B_{n}$) & (if $n=2$) $2 \pi_{1} + 2\pi_{2} = 2 \delta + \alpha_{1}$ & $4n-4$ & $4n^{2}+6n-7$ & \\ 
        && (if $n \ge 3$) $2 \pi_{1} + \pi_{2} = 2 \delta + \alpha_{1}$ &&& \\ 
        17 & ($D_{21}$, $C_{4}$)& $2\pi_{3} = \delta + 2 \alpha_{2} + 4 \alpha_{3} + 2 \alpha_{4}$ & 12 & 77 & \\ 
        18$_{n}$ ($n \ge 3$) & ($\mathfrak{so}(2n^{2}-n-1)$, $C_{n}$) & $\pi_{1}+\pi_{3} = \delta + \alpha_{2} + 2\alpha_{3} + \cdots + 2 \alpha_{n-1} + \alpha_{n}$ & $6n-10$ & $4n^{2}-2n-9$ & \\ 
        19$_{n}$ ($n \ge 3$) & ($\mathfrak{so}(2n^{2}+n)$, $C_{n}$) & $2\pi_{1}+\pi_{2} = 2\delta - \alpha_{1}$ & $4n-4$ & $4n^{2}+2n-7$ & \\ 
        20 & ($D_{64}$, $D_{8}$)& $\pi_{6} = \delta + \alpha_{3} + 2 \alpha_{4} + 3 \alpha_{5} + 4 \alpha_{6} + 2 \alpha_{7} + 2 \alpha_{8}$ & 39 & 249 & \\ 
        21$_{n}$ ($n \ge 4$) & ($\mathfrak{so}(2n^{2}-n)$, $D_{n}$) & (if $n=4$) $\pi_{1}+\pi_{3}+\pi_{4} = 2\delta - \alpha_{2}$ & $6n-13$ & $4n^{2}-2n-7$ & \\ 
        && (if $n \ge 5$) $\pi_{1}+\pi_{3} = 2\delta - \alpha_{2}$ & && \\ 
        22$_{n}$ ($n \ge 4$) & ($\mathfrak{so}(2n^{2}+n-1)$, $D_{n}$) & $2 \pi_{1}+\pi_{2} = 2 \delta + \alpha_{1}$ & $4n-6$ & $4n^{2}+2n-9$ & \\ 
        23 & ($B_{3}$, $G_{2}$) & $\pi_{1} = \delta_{\text{short}}$ & 5 & 11 & Yes \\ 
        24 & ($D_{7}$, $G_{2}$)& $3\pi_{1} = 2\delta - \alpha_{2}$ & 5 & 21 & \\ 
        25 & ($D_{13}$, $F_{4}$)& $\pi_{2}=\delta + \alpha_{1}+2\alpha_{2}+\alpha_{3}$ & 20 & 45 & \\ 
        26 & ($D_{26}$, $F_{4}$)& $\pi_{3} = 2\delta-\alpha_{4}$ & 20 & 97 & \\ 
        27 & ($D_{39}$, $E_{6}$)& $\pi_{3} = 2\delta - \alpha_{6}$ & 29 & 149 & \\ 
        28 & ($B_{66}$, $E_{7}$)& $\pi_{5} = 2\delta - \alpha_{6}$ & 47 & 259 & \\ 
        29 & ($D_{124}$, $E_{8}$)& $\pi_{2} = 2\delta - \alpha_{1}$ & 83 & 489 & \\ 
        30$_{n}$ ($n \ge 3$) & ($D_{2n}$, $A_{1} \oplus C_{n}$) & $2\pi_{1}^{A_{1}} + \pi_{2}^{C_{n}} = \delta^{A_{1}} + \delta_{\text{short}}^{C_{n}}$ & $4n-4$ & $8n - 7$ & Yes \\ 
        31 & ($G_{2}$, $A_{1}$)& $10\pi_{1} = 5 \delta$ & 1 & 5 & \\ 
        32 & ($F_{4}$, $A_{1} \oplus G_{2}$)& $4\pi^{A_{1}}_{1} + \pi^{G_{2}}_{1}=2 \delta^{A_{1}} + \delta_{\text{short}}^{G_{2}}$ & 6 & 15 & \\ 
        33 & ($E_{6}$, $G_{2}$)& $\pi_{1}+\pi_{2}=\delta + \delta_{\text{short}}$ & 6 & 21 & \\ 
        34 & ($E_{6}$, $A_{2} \oplus G_{2}$)& $\pi_{1}^{A_{2}} + \pi^{A_{2}}_{2}+\pi^{G_{2}}_{1}=\delta^{A_{2}} + \delta_{\text{short}}^{G_{2}}$ & 8 & 21 & \\ 
        35 & ($E_{7}$, $A_{2}$)& $4\pi_{1} + 4 \pi_{2}=4 \delta$ & 3 & 33 & \\ 
        36 & ($E_{7}$, $C_{3} \oplus G_{2}$)& $\pi^{C_{3}}_{2} + \pi^{G_{2}}_{1}=\delta_{\text{short}}^{C_{3}} + \delta_{\text{short}}^{G_{2}}$ & 12 & 33 & \\[.1em] 
        37 & ($E_{7}$, $A_{1} \oplus F_{4}$)& $2 \pi^{A_{1}}_{1} + \pi^{F_{4}}_{1}=\delta^{A_{1}} + \delta_{\text{short}}^{F_{4}}$ & 16 & 33 & Yes \\[.1em] 
        38 & ($E_{8}$, $G_{2} \oplus F_{4}$)& $\pi^{G_{2}}_{1} + \pi^{F_{4}}_{1}=\delta_{\text{short}}^{G_{2}} + \delta_{\text{short}}^{F_{4}}$ & 20 & 57 & \\[.1em] \hline
    \end{tabular}%
    }
    \caption{Classification of non-symmetric isotropy irreducible pairs $(\gfr,\hfr)$.}
    \label{table: classification of g h}
\end{table}

\begin{table}
    \centering
    \hspace*{-.1\textwidth} 
    \resizebox{1.3\textwidth}{!}{
    \begin{tabular}{|c||c|c|c|c|}
    \hline
        \multirow{2}{*}{($\gfr$, $\hfr$)} & Root of $\gfr$ that is & \multirow{2}{*}{Marked Dynkin diagram of $O_{\mfr}$} & \multirow{2}{*}{$\dim O_{\mfr}$} & \multirow{2}{*}{$Z_{\mfr}^{\dim Z_{\mfr}}$} \\
        & the highest weight $\rho$ of $\mfr$&&&\\\hline \hline
        ($B_{l}$, $D_{p} \oplus B_{l-p}$) ($2 \le p \le l$) & $- \alpha_{p}$ & (if $p=2<l$) ${\dynkin[labels={-\delta}] A{x}} \otimes {\dynkin[labels={1}] A{x}} \otimes {\dynkin[labels={3,,l}] B{x.**}}$ & (if $p<l$) $2l-3$ & (if $p<l$) $Z_{\text{long}}^{4l-5}$ \\ 
        && (if $2<p<l$) ${\dynkin[backwards, labels={p-1, 2, -\delta, 1}, label directions={,right,,}] D{x.***}} \otimes {\dynkin[labels={p+1,,l}] B{x.**}}$ & & \\ 
        & & (if $p=l=2$) ${\dynkin[labels={-\delta}] A{x}} \otimes {\dynkin[labels={1}] A{x}}$ & (if $p=l$) $2l-2$ & (if $p=l$) $Z_{\text{short}}^{4l-3}$ \\
        & & (if $2<p=l$) {\dynkin[backwards, labels={l-1, 2, -\delta, 1}, label directions={,right,,}] D{x.***}} & & \\
        ($C_{l}$, $C_{p} \oplus C_{l-p}$) ($1 \le p \le l-1$) & $- \alpha_{p}$ & ${\dynkin[backwards, labels={p-1, 1,-\delta}] C{x.**}} \otimes {\dynkin[labels={p+1, l-1,l}] C{x.**}}$ & $2l-2$ & $Z_{\text{short}}^{4l-3}$ \\ 
        ($D_{l}$, $D_{p} \oplus D_{l-p}$) ($2 \le p \le l-2$) & $- \alpha_{p}$ & (if $p=2<l-2$) ${\dynkin[labels={-\delta}] A{x}} \otimes {\dynkin[labels={1}] A{x}} \otimes {\dynkin[labels={3,l-2,l-1,l}, label directions={,right,,}] D{x.***}}$ & $2l-4$ & $Z_{\text{long}}^{4l-7}$ \\ 
         & & (if $2<p<l-2$) ${\dynkin[backwards, labels={p-1,2,-\delta,1}, label directions={,right,,}] D{x.***}} \otimes {\dynkin[labels={p+1,l-2,l-1,l}, label directions={,right,,}] D{x.***}}$ & & \\ 
         & & (if $l=4$ and $p=2$) ${\dynkin[labels={-\delta}] A{x}} \otimes {\dynkin[labels={1}] A{x}} \otimes {\dynkin[labels={3}] A{x}} \otimes {\dynkin[labels={4}] A{x}}$ & & \\ 
         & & (if $2< p = l-2$) ${\dynkin[backwards, labels={l-3,2,-\delta,1},label directions={,right,,}] D{x.***}} \otimes {\dynkin[labels={l-1}] A{x}}\otimes {\dynkin[labels={l}] A{x}}$ & & \\ 
        ($G_{2}$, $A_{1} \oplus A_{1}$) & $- \alpha_{2}$ & ${\dynkin[labels={1}] A{x}} \otimes {\dynkin[labels={-\delta}] A{x}}$ & $2$ & $Z_{\text{long}}^{5}$ \\ 
        ($F_{4}$, $C_{3} \oplus C_{1}$) & $- \alpha_{4}$ & ${\dynkin[labels={1,2,3}] C{**x}} \otimes {\dynkin[labels={-\delta}] A{x}}$ & $7$ & $Z_{\text{long}}^{15}$ \\ 
        ($F_{4}$, $B_{4}$) & $- \alpha_{1}$ & ${\dynkin[labels={-\delta, 4,3,2}, backwards] B{***x}}$ & $10$ & $Z_{\text{short}}^{21}$ \\ 
        ($E_{6}$, $A_{5} \oplus A_{1}$) & $- \alpha_{2}$ & ${\dynkin[labels={1}] A{x}} \otimes {\dynkin[labels={-\delta, 6,3,4,5}] A{**x**}}$ & $10$ & $Z_{\text{long}}^{21}$ \\ 
         & $- \alpha_{4}$ & ${\dynkin[labels={1, 2,3,6,-\delta}] A{**x**}}  \otimes {\dynkin[labels={5}] A{x}}$ & & \\ 
         & $- \alpha_{6}$ & ${\dynkin[labels={1,2,3,4,5}] A{**x**}} \otimes {\dynkin[labels={-\delta}] A{x}}$ & & \\ 
        ($E_{7}$, $A_{7}$) & $- \alpha_{7}$ & ${\dynkin[labels={1, 2,3,4,5,6,-\delta}] A{***x***}}$ & $16$ & $Z_{\text{long}}^{33}$ \\ 
        ($E_{7}$, $D_{6} \oplus A_{1}$) & $-\alpha_{2}$ & ${\dynkin[labels={1}] A{x}} \otimes {\dynkin[backwards, labels={-\delta, 6, 5, 4, 3, 7}, label directions={, , , right, , }] D{****x*}}$ & $16$ & $Z_{\text{long}}^{33}$ \\ 
         & $- \alpha_{6}$ & ${\dynkin[labels={1, 2, 3, 4, 5, 7}, label directions={, , , right, , }] D{****x*}}  \otimes {\dynkin[labels={-\delta}] A{x}}$ & & \\ 
        ($E_{8}$, $D_{8}$) & $- \alpha_{7}$ & ${\dynkin[labels={-\delta,1,2,3,4,5,6,8}, label directions={,,,,,right,,}] D{******x*}}$ & $28$ & $Z_{\text{long}}^{57}$ \\ 
        ($E_{8}$, $E_{7} \oplus A_{1}$) & $- \alpha_{1}$ & ${\dynkin[labels={-\delta}] A{x}} \otimes {\dynkin[backwards, upside down, labels={7,8,6,5,4,3,2}] E{******x}}$& $28$ & $Z_{\text{long}}^{57}$ \\ 
        \hline
    \end{tabular}
    }
    \caption{Highest weight orbits for isotropy irreducible pairs $(\gfr,\, \hfr)$ of equal rank.}
    \label{table: isotropy irreducible pairs of equal rank}
\end{table}

\begin{table}
    \centering
    \resizebox{\textwidth}{!}{
    \begin{tabular}{|c||c|c|c|}
    \hline
        ($\gfr$, $\hfr$) & Highest weight $\rho$ of $\mfr$ & $\dim O_{\mfr}$ & $Z_{\mfr}^{\dim Z_{\mfr}}$ \\ \hline \hline
        ($A_{l} \oplus A_{l},\, \text{diag}(A_{l})$) ($l \ge 1$) & $\delta$ & $2l-1$ & $\PP(\Ocal_{\text{min}} \oplus \Ocal_{\text{min}})^{4l-1}$ \\ 
        ($\mathfrak{so}(l) \oplus \mathfrak{so}(l),\, \text{diag}(\mathfrak{so}(l))$) ($l\ge 7$) & $\delta$ & $2l-7$ & $\PP(\Ocal_{\text{min}} \oplus \Ocal_{\text{min}})^{4l-13}$ \\ 
        ($C_{l} \oplus C_{l},\, \text{diag}(C_{l})$) ($l \ge 2$) & $\delta$ & $2l-1$ & $\PP(\Ocal_{\text{min}} \oplus \Ocal_{\text{min}})^{4l-1}$ \\ 
        ($G_{2} \oplus G_{2},\, \text{diag}(G_{2})$) & $\delta$ & $5$ & $\PP(\Ocal_{\text{min}} \oplus \Ocal_{\text{min}})^{11}$\\ 
        ($F_{4} \oplus F_{4},\, \text{diag}(F_{4})$) & $\delta$ & $15$ & $\PP(\Ocal_{\text{min}} \oplus \Ocal_{\text{min}})^{31}$ \\ 
        ($E_{6} \oplus E_{6},\, \text{diag}(E_{6})$) & $\delta$ & $21$ & $\PP(\Ocal_{\text{min}} \oplus \Ocal_{\text{min}})^{43}$ \\ 
        ($E_{7} \oplus E_{7},\, \text{diag}(E_{7})$) & $\delta$ & $33$ & $\PP(\Ocal_{\text{min}} \oplus \Ocal_{\text{min}})^{67}$ \\ 
        ($E_{8} \oplus E_{8},\, \text{diag}(E_{8})$) & $\delta$ & $57$ & $\PP(\Ocal_{\text{min}} \oplus \Ocal_{\text{min}})^{115}$ \\ 
        ($A_{l-1}$, $\mathfrak{so}(l)$) ($l \ge 3$) & (if $l=3$) $4 \pi_{1}^{A_{1}}$ & $l-2$ & $Z_{\text{long}}^{2l-3}$ \\ 
        & (if $l=4$) $2 \pi_{1}^{A_{1}'} + 2 \pi_{1}^{A_{1}''}$ & & \\ 
        & (if $l\ge 5$) $2 \pi_{1}^{\mathfrak{so}(l)}$ & & \\ 
        ($A_{2l-1}$, $C_{l}$) ($l \ge 2$) & $\pi_{2} = \delta_{\text{short}}$ & $4l-5$ & $Z_{[2^{2},\, 1^{2l-4}]}^{8l-9}$ \\ 
        ($D_{p+q+1}$, $B_{p} \oplus B_{q}$) ($p+q\ge 2$, $p \ge q \ge 0$) & (if $q > 0$) $\pi_{1}^{B_{p}} + \pi_{1}^{B_{q}}$ & (if $q > 0$) $2p+2q-2$ & (if $q > 0$) $Z_{\text{long}}^{4p+4q-3}$ \\ 
        & (if $q = 0$) $\pi_{1}^{B_{p}} = \delta_{\text{short}}$ & (if $q = 0$) $2p-1$ & (if $q=0$) $Z_{[3,\, 1^{2p-1}]}^{4p-1}$ \\ 
        ($E_{6}$, $F_{4}$) & $\pi_{1} = \delta_{\text{short}}$ & $15$ & $Z_{2 A_{1}}^{31}$ \\ 
        ($E_{6}$, $C_{4}$) & $\pi_{4}$ & $10$ & $Z_{\text{long}}^{21}$ \\ \hline
    \end{tabular}
    }
    \caption{Highest weight orbits for symmetric isotropy irreducible pairs $(\gfr,\, \hfr)$ of different rank.}
    \label{table: symmetric isotropy irreducible pairs of different rank with g simple}
\end{table}

\begin{table}
    \centering
    \begin{tabular}{|c||c|c|c|}
    \hline
        $S_{\text{ad}} / P$ & Marked Dynkin diagram of $O$ & $\dim O$ & $\dim Z_{\text{long}}$ \\ \hline \hline
        $A_{l}/P_{p} (\simeq \text{Gr}(p,\, \CC^{l+1}))$ ($1 \le p \le l$) & ${\dynkin[labels={1,2,p-1}] A{x*.*}} \otimes {\dynkin[labels={p+1,,l}] A{*.*x}}$ & $l-1$ & $2l-1$ \\ 
        $B_{l}/P_{1} (\simeq \QQ^{2l-1})$ ($l \ge 3$) & {\dynkin[labels={2,3,,l}] B{x*.**}} & $2l-3$ & $4l-5$ \\ 
        $C_{l}/P_{l}(\simeq\text{LG}(l,\, \CC^{2l}))$ ($l \ge 2$) & {\dynkin[labels={1,2,l-1}] A{x*.*}} & $l-1$ & $2l-1$ \\ 
        $D_{l}/P_{1}(\simeq\QQ^{2l-2})$ ($l \ge 4$) & {\dynkin[labels={2,3,l-2,l-1,l}, label directions={,,right,,}] D{x*.***}} & $2l-4$ & $4l-7$ \\ 
        $D_{l} / P_{p} (\simeq \SS_{l})$ ($l \ge 4$, $p=l-1,\,l$) & $\underbrace{\dynkin A{*x*.**}}_{(l-1) \text{ nodes}}$ & $2l-4$ & $4l-7$ \\ 
        $E_{6}/P_{p}(\simeq \mathbb{OP}^{2})$ ($p=1,\,5$) & {\dynkin D{****x}} & $10$ & $21$ \\ 
        $E_{7}/P_{1}$ & {\dynkin[upside down, labels={2,7,3,4,5,6}] E{*****x}} & $16$ & $33$ \\ \hline
    \end{tabular}
    \caption{Unique closed $P$-orbits $O$ associated to irreducible Hermitian symmetric spaces $S_{\text{ad}} / P$.}
    \label{table: Legendrian asso to IHSS}
\end{table}

\end{document}